\newtheorem{theorem}{Theorem}[section]
\newtheorem{lemma}[theorem]{Lemma}
\newtheorem{definition}[theorem]{Definition}
\newtheorem{remark}[theorem]{Remark}
\numberwithin{equation}{section}
\newenvironment{proof}[1][Proof]{\textbf{#1.} }
{\ \rule{0.75em}{0.75em}\smallskip}
\begin{document}

\begin{center}
\Large\bf SVD method for sparse recovery
\end{center}

\begin{center}
Long Li\footnotemark[1] \quad and \quad Liang Ding\footnotemark[2]$^{,*}$

\footnotetext[1]{Department of Mathematics, Northeast Forestry University, Harbin 150040, China;
e-mail: {\tt 15146259835@nefu.edu.cn}.}
\footnotetext[2]{Department of Mathematics, Northeast Forestry University, Harbin 150040, China;
e-mail: {\tt dl@nefu.edu.cn}. The work of this author was supported by the Fundamental Research Funds for the Central Universities (no.\ 2572021DJ03).} 

\renewcommand{\thefootnote}{\fnsymbol{footnote}}
\footnotetext[1]{Corresponding author.}
\renewcommand{\thefootnote}{\arabic{footnote}}
\end{center}

\medskip
\begin{quote}
{\bf Abstract.} Sparsity regularization has garnered significant interest across multiple disciplines, including statistics, imaging, and signal processing. Standard techniques for addressing sparsity regularization include iterative soft thresholding algorithms and their accelerated variants. However, these algorithms rely on Landweber iteration, which can be computationally intensive. Therefore, there is a pressing need to develop a more efficient algorithm for sparsity regularization. The Singular Value Decomposition (SVD) method serves as a regularization strategy that does not require Landweber iterations; however, it is confined to classical quadratic regularization.  This paper introduces two inversion schemes tailored for situations where the operator $K$ is diagonal within a specific orthogonal basis, focusing on $\ell_{p}$ regularization when $p=1$ and $p=1/2$. Furthermore, we demonstrate that for a general linear compact operator $K$, the SVD method serves as an effective regularization strategy. To assess the efficacy of the proposed methodologies, We conduct several numerical experiments to evaluate the proposed method's effectiveness. The results indicate that our algorithms not only operate faster but also achieve a higher success rate than traditional iterative methods.
\end{quote}

\smallskip
{\bf Keywords.}  sparsity regularization, singular value decomposition


\section{Introduction}

\par This paper addresses the challenge of solving an ill-posed operator equation of the form 
\begin{equation}\label{equ1.1}
Kx=y,
\end{equation}
where $K:X\rightarrow Y$ is a linear and compact operator, $x$ is sparse vector, $X$ is a Hilbert space, and $Y$ is a Hilbert space equipped with norm $\|\cdot\|_Y$. Throughout this paper, $\langle \cdot,\cdot\rangle$ denotes the inner product in the $X$ and $Y$ spaces. In practical applications, the data $y$ is not available in its exact form; instead, only an approximation $y^{\delta}$ is known, satisfying $\|y^{\delta}-y\|_Y\leq \delta$ for a small $\delta>0$. 

\par The most prevalent approach to solving the ill-posed operator equation (\ref{equ1.1}) is through a sparsity regularization with $\ell^{p}$ penalties established in \cite{DDD04}, formulated as
\begin{equation}\label{equ1.2}
    \min\limits_{x\in X}\left\{\|Kx-y^{\delta}\|_Y^{2}+\sum_{n\in\mathbb{N}} w_{n}\left|\left<x,\phi_{n}\right>\right|^{p}\right\},
\end{equation}
where $w_{n}=\alpha>0$ for all $n\in\mathbb{N}$, $\left(\phi_{n}\right)_{n\in\mathbb{N}}$ gives an orthonormal basis of $X$, and $\left\|Kx-y^{\delta}\right\|_Y^{2}$ serves as the fidelity term that measures the discrepancy between the estimated data $y^{\delta}$ and the model output $Kx$. Over the past two decades, $\ell_1$ sparsity regularization has gained significant traction, prompting extensive research into issues of well-posedness and the development of algorithms for sparse recovery problems, as noted in \cite{DDD04,F2010,JM12} and the references therein. Additionally, several non-convex sparsity regularization methods, such as $\ell_p$ (for $0\le p<1$) \cite{BL09,G10} and $\alpha\ell_1-\beta\ell_2$ \cite{DH19,DH20} regularization, can yield sparser solutions. This paper concentrates on a regularizers for a sparsity regularization with $\ell_{p}$ penalties, which is a key emphasis on developing numerical algorithms for sparsity regularization.

\subsection{Some relative works}

\par A widely utilized algorithm, known as the Iterative Soft Thresholding Algorithm (ISTA), was introduced in \cite{DDD04} for addressing \eqref{equ1.2}. Subsequently, ISTA was reinterpreted within a convex analysis framework as a proximal algorithm \cite{CW05} and also as a generalized conditional gradient algorithm \cite{BLM09}. ISTA employs a gradient descent method that updates all coordinates of the solution simultaneously at each iteration. This approach can lead to arbitrary slowness and is computationally intensive. The search for accelerated variations of ISTA has become a popular research area, resulting in the proposal of several faster algorithms. A comparison of various accelerated algorithms, including the \verb+"+Fast ISTA\verb+"+ (\cite{BT09}), is presented in \cite{LBDZZ09}. By applying a smoothing technique from Nesterov (\cite{N05}), a swift and accurate first-order method was proposed in \cite{BBC11} to tackle large-scale compressed sensing challenges. Furthermore, \cite{DC15} introduced a simple heuristic adaptive restart technique, which can significantly enhance the convergence rate of accelerated gradient schemes. The convergence of the iterates of the \verb+"+Fast Iterative Shrinkage/Thresholding Algorithm\verb+"+ is established in \cite{CD15}. In \cite{OBGXY05}, a novel iterative regularization procedure for inverse problems, leveraging Bregman distances, is explored, with numerical results demonstrating significant improvements over conventional methods. An explicit algorithm based on a primal-dual approach for minimizing an $\ell_1$-penalized least-squares function with a non-separable $\ell_1$ term is proposed in \cite{LV11}. Additionally, \cite{FPRW16} investigates an iteratively reweighted least squares algorithm along with corresponding convergence analysis for the regularization of linear inverse problems with sparsity constraints. Several accelerated projected gradient (PG) methods are documented in \cite{BF08,DFL08,FNW07,WNF09}. However, all algorithms mentioned in the previous context are either ST or PG Landweber iterative algorithms. Essentially, these represent the combination of ST/PG with the Landweber iteration, expressed as follows
\begin{equation}\label{equ1.3}
x^{n+1}={\rm prox}\left(x^n-K^{*}\left(Kx^n-y^{\delta}\right)\right).
\end{equation}
This is a gradient descent algorithm known for its typically slow convergence. Furthermore, some experiments in sparse recovery indicate that the probability of successful recovery using these iterative algorithms diminishes as the support of the  $\ell_1$-minimum norm solution increases \cite{LY18,YLHX15}.

A convenient admissible regularization strategy to bypass the Landweber iteration \eqref{equ1.3} is provided by the singular value decomposition (SVD) method \cite{EHN1996,K1996}. Let $K:X\rightarrow Y$ be a linear compact operator, and denote by $\{(\sigma_{n},v_{n},u_{n})\}_{n\in\mathbb{N}}$ a singular system for $K$. The operator $\mathcal{R}_{\alpha}:Y\rightarrow X$, where $\alpha>0$, is defined by the equation
\begin{equation}\label{equ1.4}
    \mathcal{R}_{\alpha}y:=\sum_{n\in\mathbb{N}}\frac{q\left(\alpha, \sigma_n\right)}{\sigma_{n}}\left<y,u_{n}\right>v_{n}\ ,\ y\in Y.
\end{equation}
This operator serves as a regularization operator with the property that $\left\|\mathcal{R}_{\alpha}\right\|\leq c(\alpha)$, where the function $q$ is referred to as a regularizing filter for $K$. Specifically, if $q(\alpha, \sigma)=\sigma^2/(\alpha+\sigma^2)$, then $\mathcal{R}_{\alpha}$ corresponds to the Tikhonov regularization method. If $q(\alpha, \sigma)=1-(1-\alpha\sigma^2)^{1/\alpha}$, $\mathcal{R}_{\alpha}$ represents the Landweber regularization method. In the case where
\begin{align*}
    q(\alpha, \sigma)=
    \left\{             
             \begin{array}{ll}
             1, &  \sigma^2\ge\alpha, \\
             0, &  \sigma^2<\alpha, 
             \end{array}
    \right.
\end{align*}
$\mathcal{R}_{\alpha}$ signifies a truncated singular value regularization method. As a direct inversion technique, the SVD method is particularly efficient and easy to implement. However, since the equation \eqref{equ1.4} constitutes a linear regularization method, the operator $\mathcal{R}_{\alpha}$ is regarded as a family of bounded linear operators that converge pointwise to $K^{\dag}$ as $\alpha\rightarrow 0$. Consequently, the SVD method is confined to classical quadratic regularization, which presumes that the true solution exhibits a sufficient degree of smoothness. In contrast, there is a growing interest in nonlinear regularization methods involving nonlinear mappings $\mathcal{R}_{\alpha}$ (which may even be multivalued) \cite{BB18}. There exists significant potential for applying the SVD method to other penalty terms, however, to the best of our knowledge, there is little work available in the literature regarding this approach.

\subsection{Contribution and organization}

\par In this paper, we substitute the orthogonal vector $\left(\phi_{n}\right)_{n\in\mathbb{N}}$ with the right singular vector $\left(v_{n}\right)_{n\in\mathbb{N}}$ of $K$. When $p=1$ and $\omega_{n}=\alpha$ for $n=1,2,\cdots$, the proposed minimization approach encourages sparsity in the expansion of $x$ concerning the $v_{n}$, which allows us to rewrite the sparsity regularization problem \eqref{equ1.2} as follows
\begin{equation}\label{equ1.5}
    \min\limits_{x\in X}\left\{\Phi_{\alpha}(x)=\left\|Kx-y^{\delta}\right\|_Y^{2}+\alpha \sum_{n\in\mathbb{N}}\left|\left<x, v_{n}\right>\right|^{p}\right\}.
\end{equation}
\par This paper aims to improve computational efficiency and enhance the success rate of recovery in solving inverse sparse problems. It presents four key contributions:

\textbullet\ When $p=1$, we introduce a nonlinear regularization operator ($\ell^{1}$-SVD) of the form 
\begin{equation}\label{equ1.6}
      \mathcal{R}_{\alpha}y^{\delta}:=\sum_{n\in\mathbb{N}}\frac{1}{\sigma_{n}^{2}}\mathcal{S}_{\alpha}\left(\sigma_{n}\left< y^{\delta},u_{n}\right>\right)v_{n}
\end{equation}
inspired by the SVD method for the sparsity regularization \eqref{equ1.5} when $p=1$ and $K$ happens to be diagonal under a set of orthogonal bases
\begin{equation}\label{equ1.7}
    \min\limits_{x\in X}\left\{\Phi_{\alpha}(x)=\left\|Kx-y^{\delta}\right\|_{Y}^{2}+ \alpha\sum_{n\in\mathbb{N}}\left|\left<x,v_{n}\right>\right|\right\},
\end{equation}
where $\mathcal{S}_{\alpha}$ denotes a soft thresholding operator governed by a regularization parameter $\alpha>0$. 

\textbullet\ When $p=1/2$, we also propose a nonlinear regularization operator ($\ell^{1/2}$-SVD) of the form 
\begin{equation}\label{equ1.8}
    \mathcal{R}_{\alpha}y^{\delta}:=\sum_{n\in\mathbb{N}}\mathcal{H}_{\alpha}\left(\sigma_{n}^{1/3}\left<y^{\delta}, u_{n}\right>\right)v_{n}
\end{equation}
inspired by the SVD method for the sparsity regularization \eqref{equ1.5} when $p=1/2$ and $K$ happens to be diagonal under a set of orthogonal bases
\begin{equation}\label{equ1.9}
    \min\limits_{x\in X}\left\{\Phi_{\alpha}(x)=\left\|Kx-y^{\delta}\right\|_{Y}^{2} +\alpha\sum_{n\in\mathbb{N}}\left|\left<x,v_{n}\right>\right|^{1/2}\right\},
\end{equation}
where $\mathcal{H}_{\alpha}$ is a half thresholding operator introduced in \cite{XCXZ12}, with a regularization parameter $\alpha>0$. 

\textbullet\ Our analysis reveals that when $K$ is diagonal under a set of orthogonal bases, the solution obtained from the $\ell^{1}$-SVD algorithm serves as a minimizer for \eqref{equ1.7}, while the solution derived from the $\ell^{1/2}$-SVD algorithm acts as a stationary point for \eqref{equ1.9}. Additionally, we demonstrate that for a general linear compact operator $K$, the SVD method employing nonlinear soft thresholding and half thresholding functions is an effective regularization strategy. We further establish the regularization property and provide an error estimate for the case when $p=1$.

\textbullet\ Experimental results consistently highlight the advantage of our proposed method over several conventional iterative algorithms, particularly concerning algorithm execution time and the probability of successful recovery. Unlike traditional iterative algorithms, which require the calculation of \eqref{equ1.3} in every iteration, the SVD algorithm necessitates only a single computation of singular value decomposition. Once a singular value system is established, a regularized solution can be determined using \eqref{equ1.6} or \eqref{equ1.8}. Thus, the SVD algorithm is significantly faster compared to iterative methods.

\par This paper is structured as follows. In the next section, we will introduce the notation and review the results of the SVD method. In Section \ref{sec3}, we will present a novel approach that integrates sparsity regularization with $\ell^{1}$ and $\ell^{1/2}$ penalties within the framework of SVD in scenarios where the operator $K$ is diagonal within a specific orthogonal basis. Furthermore, we demonstrate that for a general linear compact operator $K$, the SVD method that utilize nonlinear soft thresholding and half thresholding functions serves as an effective regularization strategy. Finally, we will share results from numerical experiments focused on compressive sensing and image deblurring in Section \ref{sec4}.

\section{Preliminaries}\label{sec2}

Before we delve into the proposed algorithms, let us first introduce some notations and key results about Tikhonov regularization and the SVD method. For further details, refer to \cite{DDD04,EHN1996,K1996}. 

\begin{definition}\label{def2.1}
    For $\alpha>0$, a minimizer of the regularization function $\mathcal{J}_{\alpha}(x)$ is denoted by
    \begin{equation*}
    x_{\alpha}^{\delta}\in {\rm arg}\min_{x\in X}\left\{\mathcal{J}_{\alpha}(x)= \|Kx-y^{\delta}\|_{Y}^{2}+\alpha\sum_{n\in\mathbb{N}}\left|\left<x,v_{n}\right>\right|^{p}\right\}.
    \end{equation*}
\end{definition}

\begin{definition}\label{def2.2}
    An element $x\in X$ is called sparse if ${\rm supp}(x):=\left\{i\in\mathbb{N} \mid x_{i}\neq 0\right\}$ is finite, where $x_{i}$ represents the $i${\rm th} component of $x$. The quantity $\|x\|_{\ell_{0}}:={\rm supp}(x)$ denotes the cardinality of ${\rm supp}(x)$. If $\|x\|_{\ell_{0}}=s$ for some $s\in\mathbb{N}$, then $x\in X$ referred to as $s$-sparse.
\end{definition}

\begin{definition}\label{def2.3}
    Let $K:X\rightarrow Y$ be a linear compact operator between Hilbert spaces. Then there exists a unique linear compact operator $K^{*}:Y\rightarrow X$ such that
    \begin{equation*}
        \left<Kx,y\right>=\left<x,K^{*}y\right>
    \end{equation*}
    for all $x\in X$ and $y\in Y$. This operator $K^{*}$ is referred to as the adjoint operator of $K$. If $X=Y$, the operator $K$ is called self-adjoint if $K^{*}=K$.
\end{definition}

\begin{definition}\label{def2.4}
    Let $X$ and $Y$ be Hilbert spaces, and let $K:X\rightarrow Y$ be a linear compact operator with an adjoint operator denoted by $K^{*}:Y\rightarrow X$. The square roots $\sigma_{n}=\sqrt{\lambda_{n}}$, where $n\in\mathbb{N}$, of the eigenvalues $\lambda_{n}$ of the self-adjoint operator $K^{*}K:X\rightarrow X$ are referred to as the singular values of $K$.
\end{definition}

\par It is important to note that the eigenvalues of $K^{*}K$ are positive. The equation $K^{*}Kx=\lambda x$ implies that $\lambda\left<x,x\right>=\left<K^{*}Kx,x\right>=\left<Kx,Kx\right>=\|Kx\|^{2}> 0$ with $\lambda>0$, which is a result of the definition of the adjoint operator.

\begin{lemma}\label{lem2.5}
    Let $K:X\rightarrow Y$ be a linear compact operator, with $K^{*}:Y\rightarrow X$ denoting its adjoint operator. The collection of singular values $\{\sigma_{n}\}_{n\in\mathbb{N}}$ represents an ordered sequence of positive singular values for $K$, arranged in decreasing order and taking into account their multiplicities. The $\{v_{n}\}_{n\in\mathbb{N}}$ and $\{u_{n}\}_{n\in\mathbb{N}}$ are corresponding complete orthonormal system of eigenvectors of $K^{*}K$ (which spans $\overline{\mathcal{R}(K^{*})}=\overline{\mathcal{R}(K^{*}K)}$) and $KK^{*}$ (which spans $\overline{\mathcal{R}(K)}=\overline{\mathcal{R}(KK^{*})}$), respectively. They satisfy the following properties
    \begin{equation}\label{equ2.1}
        Kv_{n}=\sigma u_{n}\ and\ K^{*}u_{n}=\sigma v_{n},\ n\in\mathbb{N}.
    \end{equation}
    The collection $\{(\sigma_{n},v_{n},u_{n})\}_{n\in\mathbb{N}}$ is termed the singular system for $K$. Every element $x\in X$ and $y\in Y$ can be expressed through singular value decomposition as follows
    \begin{equation*}
        x=\sum_{n\in\mathbb{N}}\left<x,v_{n}\right>v_{n}\ and\ y=\sum_{n\in\mathbb{N}}\left<y,u_{n}\right>u_{n}.
    \end{equation*}
    Furthermore, the following equalities hold
    \begin{equation}\label{equ2.2}
        Kx=\sum_{n\in\mathbb{N}}\sigma_{n}\left<x,v_{n}\right>u_{n}\ and\ K^{*}y=\sum_{n\in\mathbb{N}}\sigma_{n}\left<y,u_{n}\right>v_{n},    
    \end{equation}
    where these infinite series converge in the Hilbert space norms of $X$ and $Y$,  respectively. 
\end{lemma}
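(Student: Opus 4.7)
The plan is to apply the spectral theorem for compact self-adjoint operators to $K^*K$, then transfer the resulting decomposition to $Y$ through $K$. The first step is to verify that $K^*K:X\rightarrow X$ is compact, self-adjoint, and positive: compactness follows because $K$ is compact and $K^*$ bounded, self-adjointness is immediate from Definition \ref{def2.3}, and positivity follows from $\langle K^*Kx,x\rangle = \|Kx\|_Y^2$ (already noted after Definition \ref{def2.4}). The spectral theorem then yields a non-increasing sequence of positive eigenvalues $\lambda_n\rightarrow 0$, counted with multiplicity, and an orthonormal eigensystem $\{v_n\}_{n\in\mathbb{N}}$ that is complete in $\overline{\mathcal{R}(K^*K)}$. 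Setting $\sigma_n := \sqrt{\lambda_n}$ matches Definition \ref{def2.4}.

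The companion vectors on the $Y$ side would be defined by $u_n := \sigma_n^{-1}Kv_n$. Then $Kv_n = \sigma_n u_n$ is tautological; applying $K^*$ and using the eigenvalue relation gives $K^*u_n = \sigma_n^{-1}K^*Kv_n = \sigma_n v_n$, which establishes \eqref{equ2.1}. Orthonormality of $\{u_n\}$ follows from $\langle Kv_n,Kv_m\rangle = \langle K^*Kv_n,v_m\rangle = \lambda_n\delta_{nm}$, and applying $K$ to the second identity of \eqref{equ2.1} confirms that each $u_n$ is an eigenvector of $KK^*$ with eigenvalue $\lambda_n$.

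The step I expect to be the main (though standard) obstacle is identifying the closures of the ranges so that the completeness claim in the lemma is justified. I would establish $\overline{\mathcal{R}(K^*)} = \overline{\mathcal{R}(K^*K)}$ (and the mirror identity) via orthogonal complements: $\overline{\mathcal{R}(K^*)} = \mathcal{N}(K)^\perp$, and $\mathcal{N}(K) = \mathcal{N}(K^*K)$ (again from $\|Kx\|_Y^2 = \langle K^*Kx,x\rangle$), so $\overline{\mathcal{R}(K^*)} = \mathcal{N}(K^*K)^\perp = \overline{\mathcal{R}(K^*K)}$. This promotes $\{v_n\}$ to a complete orthonormal basis of $\overline{\mathcal{R}(K^*)}$ and, symmetrically, $\{u_n\}$ to one of $\overline{\mathcal{R}(K)}$.

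Finally, the expansions asserted in the lemma follow from standard Hilbert-space Fourier series on these subspaces, with the convention $X = \mathcal{N}(K)\oplus\overline{\mathcal{R}(K^*)}$ absorbing any kernel component. The identities \eqref{equ2.2} are then obtained by applying the bounded operator $K$ termwise: $Kx = \sum_n\langle x,v_n\rangle Kv_n = \sum_n\sigma_n\langle x,v_n\rangle u_n$ with convergence in $Y$, and the dual formula for $K^*y$ is derived identically with the roles of $K$, $K^*$, $\{v_n\}$, $\{u_n\}$ exchanged.
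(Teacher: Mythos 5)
Your proposal is correct, and it is essentially the standard spectral-theorem construction that the paper itself relies on: the paper states Lemma \ref{lem2.5} without proof, deferring to \cite{EHN1996,K1996}, and your argument (diagonalize $K^{*}K$, set $u_{n}=\sigma_{n}^{-1}Kv_{n}$, identify $\overline{\mathcal{R}(K^{*})}=\mathcal{N}(K)^{\perp}=\mathcal{N}(K^{*}K)^{\perp}=\overline{\mathcal{R}(K^{*}K)}$, then expand termwise) is precisely the proof found there. Your remark that the expansion $x=\sum_{n}\left<x,v_{n}\right>v_{n}$ only holds modulo a component in $\mathcal{N}(K)$ (i.e., $x=x_{0}+\sum_{n}\left<x,v_{n}\right>v_{n}$ with $x_{0}\in\mathcal{N}(K)$, and symmetrically for $y$ and $\mathcal{N}(K^{*})$) is a genuine and correct refinement of the lemma as stated, which tacitly assumes trivial kernels; note that the identities \eqref{equ2.2} remain exact regardless, since $K$ and $K^{*}$ annihilate those components.
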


\par The equation \eqref{equ2.2} is referred to as the singular value expansion and serves as the infinite-dimensional analogue of the well-known singular value decomposition of a matrix.

\begin{lemma}\label{lem2.6}
     If $\{(\sigma_{n},v_{n},u_{n})\}_{n\in\mathbb{N}}$ constitutes a singular system for $K$, then the following holds
    \begin{equation*}
        \left<y,u_{n}\right>=\sigma_{n}\left<x,v_{n}\right>,
    \end{equation*}
    where $x\in X$ and $y\in Y$ for all $n\in\mathbb{N}$.
\end{lemma}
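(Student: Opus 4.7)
The statement of Lemma 2.6 is (implicitly) about the case $y = Kx$, since otherwise there is no relationship forcing $\langle y, u_n\rangle$ and $\langle x, v_n\rangle$ to coincide up to the factor $\sigma_n$. My plan is to deduce the identity directly from the singular system relations already established in Lemma 2.5, in particular equations \eqref{equ2.1} and \eqref{equ2.2}, together with the definition of the adjoint in Definition 2.3.

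First I would invoke the adjoint identity $\langle Kx, u_n\rangle = \langle x, K^{*}u_n\rangle$. By \eqref{equ2.1} we have $K^{*}u_n = \sigma_n v_n$, so the right-hand side becomes $\sigma_n \langle x, v_n\rangle$. Combining these gives
\begin{equation*}
\langle y, u_n\rangle \;=\; \langle Kx, u_n\rangle \;=\; \langle x, K^{*}u_n\rangle \;=\; \sigma_n\langle x, v_n\rangle,
\end{equation*}
which is exactly the claim.

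As a sanity check (and to present an alternative derivation if desired), I would note that the same identity can be read off directly from the singular value expansion \eqref{equ2.2}: writing $Kx = \sum_{m\in\mathbb{N}} \sigma_m\langle x, v_m\rangle u_m$ and taking the inner product with $u_n$, the orthonormality of $\{u_n\}$ isolates the $m=n$ term and produces $\sigma_n\langle x, v_n\rangle$. This convergence is justified by the second half of Lemma 2.5, which guarantees that the series converges in the $Y$-norm, so the inner product can be exchanged with the sum.

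There is no real obstacle here; the only subtlety to flag is the implicit hypothesis $y = Kx$ (otherwise the statement is false, as one can easily take $y \in \mathcal{R}(K)^{\perp}$ with $\langle y, u_n\rangle = 0$ while $\langle x, v_n\rangle \neq 0$). If the paper instead means this to hold for all $x$ and $y$ independently, then one would have to reinterpret the claim, but the natural and standard reading makes it an immediate consequence of \eqref{equ2.1}.
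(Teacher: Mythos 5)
Your proof is correct: the paper states Lemma \ref{lem2.6} without proof (deferring to the cited references), and your one-line derivation $\langle y,u_n\rangle=\langle Kx,u_n\rangle=\langle x,K^{*}u_n\rangle=\sigma_n\langle x,v_n\rangle$ via \eqref{equ2.1} is exactly the standard argument those references use. You are also right to flag the implicit hypothesis $y=Kx$: as literally stated the lemma is false for independent $x$ and $y$, and the paper's own applications (e.g.\ in the proof of Theorem \ref{the3.6}, where $\sigma_n\langle y,u_n\rangle=\sigma_n^{2}\langle x,v_n\rangle$ is used with $y=Kx$) confirm that this is the intended reading.
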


\begin{theorem}\label{the2.7}
    Let $K:X\rightarrow Y$ be a linear compact operator with a singular system $\left\{\left(\sigma_{n},v_{n},u_{n}\right)\right\}_{n\in\mathbb{N}}$. The equation 
    \begin{equation}\label{equ2.3}
        Kx=y
    \end{equation}
    is solvable if and only if 
    \begin{equation*}
        y\in\mathcal{N}(K^{*})^{\bot}\quad and \quad \sum_{n\in\mathbb{N}}\frac{1}{\sigma_{n}^{2}}|\left<y,u_{n}\right>|^{2}<\infty.
    \end{equation*}
    In this case, the solution to \eqref{equ2.3} is given by
    \begin{equation}\label{equ2.4}
        x=\sum_{n\in\mathbb{N}}\frac{\left<y,u_{n}\right>}{\sigma_{n}}v_{n}.
    \end{equation}
\end{theorem}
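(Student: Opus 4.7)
The strategy is a standard Picard-type argument that splits into necessity and sufficiency, relying on two facts established earlier: the singular value identities of Lemma \ref{lem2.5} (in particular that $\{u_n\}$ is a complete orthonormal system for $\overline{\mathcal{R}(K)}$) and the coupling $\langle y,u_n\rangle = \sigma_n\langle x,v_n\rangle$ of Lemma \ref{lem2.6}. The additional fact I will use without proof is the orthogonal decomposition $Y = \overline{\mathcal{R}(K)} \oplus \mathcal{N}(K^{*})$, equivalently $\mathcal{N}(K^{*})^{\bot} = \overline{\mathcal{R}(K)}$, which is a direct consequence of Definition \ref{def2.3}.

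For the necessity direction, I assume $Kx = y$ for some $x \in X$. The membership $y \in \mathcal{N}(K^{*})^{\bot}$ is immediate: for any $z \in \mathcal{N}(K^{*})$, $\langle y,z\rangle = \langle Kx,z\rangle = \langle x,K^{*}z\rangle = 0$. Next, Lemma \ref{lem2.6} gives $\langle y,u_n\rangle = \sigma_n\langle x,v_n\rangle$, so that
\begin{equation*}
\sum_{n\in\mathbb{N}}\frac{1}{\sigma_n^{2}}|\langle y,u_n\rangle|^{2} = \sum_{n\in\mathbb{N}}|\langle x,v_n\rangle|^{2} \leq \|x\|^{2} < \infty
\end{equation*}
by Bessel's inequality, which establishes the second condition.

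For the sufficiency direction, I define the candidate solution
\begin{equation*}
\tilde{x} := \sum_{n\in\mathbb{N}}\frac{\langle y,u_n\rangle}{\sigma_n}\,v_n,
\end{equation*}
whose convergence in $X$ is guaranteed precisely by the summability hypothesis together with the orthonormality of $\{v_n\}$ (the partial sums form a Cauchy sequence). Applying the bounded operator $K$ term-by-term and using $Kv_n = \sigma_n u_n$ from \eqref{equ2.1} yields $K\tilde{x} = \sum_{n\in\mathbb{N}}\langle y,u_n\rangle u_n$. To close the argument I invoke $y \in \mathcal{N}(K^{*})^{\bot} = \overline{\mathcal{R}(K)}$ together with the fact that $\{u_n\}$ is a complete orthonormal system for $\overline{\mathcal{R}(K)}$, which forces $y = \sum_{n\in\mathbb{N}}\langle y,u_n\rangle u_n$. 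Hence $K\tilde{x} = y$ and \eqref{equ2.4} provides a solution.

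The only delicate step is the last one: equating $y$ with its Fourier expansion in $\{u_n\}$ requires knowing that $\{u_n\}$ is complete in $\overline{\mathcal{R}(K)}$ (not merely orthonormal in $Y$) and that $y$ lies in $\overline{\mathcal{R}(K)}$, not just in some larger closed subspace. Both pieces are supplied by Lemma \ref{lem2.5} and the identification $\mathcal{N}(K^{*})^{\bot} = \overline{\mathcal{R}(K)}$; without the first orthogonality condition on $y$, the series could still converge and yield a vector $\tilde{x}$, but $K\tilde{x}$ would only recover the projection of $y$ onto $\overline{\mathcal{R}(K)}$ rather than $y$ itself, which is why the condition $y\in\mathcal{N}(K^{*})^{\bot}$ cannot be dropped.
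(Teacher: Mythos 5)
Your proof is correct: the paper states Theorem \ref{the2.7} as a preliminary without proof, deferring to its references \cite{EHN1996,K1996}, and your argument is precisely the classical Picard-criterion proof found there — necessity via $\langle y,u_n\rangle=\sigma_n\langle x,v_n\rangle$ and Bessel, sufficiency by summing the series, applying $K$ termwise with $Kv_n=\sigma_n u_n$, and using $\mathcal{N}(K^{*})^{\bot}=\overline{\mathcal{R}(K)}$ together with completeness of $\{u_n\}$ in $\overline{\mathcal{R}(K)}$ to identify $y$ with its Fourier expansion. Your closing remark correctly isolates the one delicate point (why the condition $y\in\mathcal{N}(K^{*})^{\bot}$ cannot be dropped), and no gap remains; one could add only the cosmetic observation that \eqref{equ2.4} is \emph{a} solution (the minimum-norm one), unique exactly when $\mathcal{N}(K)=\{0\}$.
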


\section{SVD method for sparsity regularization}\label{sec3}

\par The equation \eqref{equ2.4} illustrates the ill-posedness of the compact operator equation $Kx=y$. When a perturbation $y^{\delta}$ is introduced to $y$, \eqref{equ2.4} is not stable because $\sigma_{n}\rightarrow 0$ $\left(n\rightarrow +\infty\right)$. In general regularization theory, the construction of the regularization operator $\mathcal{R}_{\alpha}$ fundamentally aims to devise a method for filtering out the small singular values $\sigma_{n}$ of the operator $K$. The subsequent results delineate the feasibility of this general construction.

\begin{theorem}\label{the3.1}
    Let $\left\{\left(\sigma_{n}>0, u_{n}, v_{n}\right)\right\}_{n\in\mathbf{N}}$ be a singular system of the compact linear operator $K$. Define the function
    \begin{equation*}
        q\left(\alpha,\sigma_{n}\right): \left(0,\infty\right)\times(0,\left\|K\right\|]\rightarrow \mathbf{R}
    \end{equation*}
    which satisfies the following properties:
    \begin{enumerate}
    \item For all $\alpha>0$ and $0<\sigma_{n}<\left\|K\right\|$, it holds that $\left|q\left(\alpha,\sigma_{n}\right)\right|\leq 1$;
    \item There exists a function $c\left(\alpha\right)$ such that for all $0<\sigma_ {n}<\left\|K\right\|$, the inequality 
        \begin{equation*}
            \left|q\left(\alpha,\sigma_{n}\right)\right|\leq c\left(\alpha\right)\sigma_{n}
        \end{equation*} 
        holds;
    \item For every $0<\sigma_{n}<\left\|K\right\|$, it holds that $\lim_{\alpha\rightarrow 0}q\left(\alpha,\sigma_{n}\right)=1$.
    \end{enumerate}
    Then the operator $\mathcal{R}_{\alpha}:Y\rightarrow X$, $\alpha>0$:
    \begin{equation}\label{equ3.1}
        \mathcal{R}_{\alpha}y=\sum_{n\in\mathbf{N}}\frac{q\left(\alpha,\sigma_{n}\right)} {\sigma_{n}}\left<y,u_{n}\right>v_{n}\ ,\ y\in Y
    \end{equation}
    constitutes a regularization operator with $\left\|\mathcal{R}_{\alpha}\right\|\leq c\left(\alpha\right)$.
\end{theorem}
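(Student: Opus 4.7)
The plan is to establish the two defining properties of a regularization strategy in turn: boundedness with operator norm controlled by $c(\alpha)$, and pointwise convergence $\mathcal{R}_{\alpha} y \to K^{\dagger} y$ as $\alpha \to 0$ on the domain of the Moore-Penrose inverse described in Theorem \ref{the2.7}.

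First I would address well-definedness and boundedness. Since $\{v_{n}\}_{n\in\mathbb{N}}$ is orthonormal in $X$, Parseval's identity applied to the partial sums of \eqref{equ3.1} gives
\[
\left\|\mathcal{R}_{\alpha} y\right\|_X^2 \;=\; \sum_{n\in\mathbb{N}} \frac{\left|q(\alpha,\sigma_n)\right|^2}{\sigma_n^2}\,\left|\left\langle y,u_n\right\rangle\right|^2.
\]
Applying property (2), namely $|q(\alpha,\sigma_n)|/\sigma_n\le c(\alpha)$, and then Bessel's inequality for $\{u_n\}$ in $Y$ yields $\|\mathcal{R}_{\alpha} y\|_X^2 \le c(\alpha)^2\|y\|_Y^2$. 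This simultaneously shows that the series defining $\mathcal{R}_{\alpha} y$ converges absolutely in $X$, that $\mathcal{R}_{\alpha}$ is a bounded linear operator from $Y$ to $X$, and that $\|\mathcal{R}_{\alpha}\|\le c(\alpha)$.

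Next I would argue convergence to the generalized inverse. By Theorem \ref{the2.7}, for $y\in\mathcal{N}(K^*)^\perp$ with $\sum_n \sigma_n^{-2}|\langle y,u_n\rangle|^2 <\infty$, the solution is $K^{\dagger}y=\sum_n \sigma_n^{-1}\langle y,u_n\rangle v_n$. Using orthonormality of $\{v_n\}$,
\[
\left\|\mathcal{R}_{\alpha} y - K^{\dagger}y\right\|_X^2 \;=\; \sum_{n\in\mathbb{N}} \frac{\left|q(\alpha,\sigma_n)-1\right|^2}{\sigma_n^2}\,\left|\left\langle y,u_n\right\rangle\right|^2.
\]
Property (3) makes each summand tend to zero as $\alpha\to 0$, while property (1) provides the uniform envelope $|q(\alpha,\sigma_n)-1|^2\le 4$, so each summand is dominated by the summable sequence $4\sigma_n^{-2}|\langle y,u_n\rangle|^2$. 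The dominated convergence theorem applied to counting measure then delivers $\mathcal{R}_{\alpha} y\to K^{\dagger}y$ in $X$ as $\alpha\to 0$.

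The main obstacle is essentially this last interchange of limit and infinite sum. Both hypotheses (1) and (3) are crucial and must be used jointly: (3) alone only gives termwise convergence, while (1) supplies the integrable envelope needed for dominated convergence. I also expect to note that the result expresses only pointwise convergence of $\mathcal{R}_{\alpha}$ to $K^{\dagger}$ on $\mathcal{D}(K^{\dagger})$ (it cannot be uniform since $K^{\dagger}$ is unbounded whenever $K$ has non-closed range), so the full regularization framework requires pairing $\mathcal{R}_{\alpha}$ with an \emph{a priori} or discrepancy-based parameter choice $\alpha=\alpha(\delta)\to 0$ that exploits the uniform bound $\|\mathcal{R}_{\alpha}\|\le c(\alpha)$ to control the propagated data error $c(\alpha)\delta$. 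Outside of this subtlety the proof is a direct application of Parseval, Bessel, and dominated convergence.
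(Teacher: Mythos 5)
Your proof is correct, and since the paper supplies no proof of Theorem \ref{the3.1} at all --- it defers to \cite[Theorem 2.6]{K1996} --- the comparison is with that standard argument. The boundedness step is identical: property (2) plus Parseval for $\{v_{n}\}$ and Bessel for $\{u_{n}\}$ gives $\|\mathcal{R}_{\alpha}\|\leq c(\alpha)$. For the limit, the cited proof works source-side: it writes $\|\mathcal{R}_{\alpha}Kx-x\|^{2}=\sum_{n}|q(\alpha,\sigma_{n})-1|^{2}|\langle x,v_{n}\rangle|^{2}$ (using $\langle Kx,u_{n}\rangle=\sigma_{n}\langle x,v_{n}\rangle$, i.e.\ the paper's Lemma \ref{lem2.6}, so the $\sigma_{n}^{-2}$ cancels and no Picard condition needs to be invoked) and then splits the sum at a finite index $N$: the tail is controlled by the envelope $|q-1|\leq 2$ from property (1) and square-summability of $(\langle x,v_{n}\rangle)_{n}$, and the finitely many head terms vanish by property (3). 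That is precisely a hands-on $\epsilon/2$ rendition of the dominated convergence you invoke, so the two routes are interchangeable; yours is slightly slicker, the classical one more elementary. Your data-side formulation $\mathcal{R}_{\alpha}y\to K^{\dagger}y$ on $\mathcal{D}(K^{\dagger})$ is the equivalent statement, with one caveat you correctly gesture at and that the paper glosses over: in general $\mathcal{R}_{\alpha}Kx\to P_{\overline{\mathcal{R}(K^{*})}}\,x$, so the paper's Definition \ref{def3.5} (``for all $x\in X$'') tacitly assumes $K$ injective, as in Kirsch's standing hypotheses. Two minor points: the orthogonal series defining $\mathcal{R}_{\alpha}y$ converges \emph{unconditionally} in $X$ because its coefficient sequence is square-summable, but not in general \emph{absolutely} (that would need the coefficients in $\ell^{1}$), so that word should be dropped; and your closing discussion of the parameter choice $\alpha(\delta)$ is sound but exceeds what the theorem asserts.
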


\par The proof associated with the theorem above can be referenced in \cite[Theorem 2.6]{K1996}. However, as it currently stands, $\mathcal{R}_{\alpha}$ is limited to linear forms for quadratic regularization; thus, we are particularly interested in examining the regularization operator under the premise that $\mathcal{R}_{\alpha}$ is a nonlinear operator, with the intention of further exploration in this domain.

\subsection{SVD method for $\ell^{1}$-SVD regularization operator}\label{sec3.1}

\par In this section, we explore the interplay between the SVD method and sparsity regularization with $\ell^{1}$ penalties, formally defined by the optimization problem
\begin{equation}\label{equ3.2}
    \min_{x\in X}\left\{\Phi_{\alpha}(x)=\left\|Kx-y^{\delta}\right\|_{Y}^{2}+ \alpha\sum_{n\in\mathbb{N}}\left|\left<x,v_{n}\right>\right|\right\},
\end{equation}
where $\alpha>0$ serves as the regularization parameter. In scenarios where the operator $K$ is diagonal within a specific orthogonal basis, we present a direct formulation of the SVD calculation pertinent to the corresponding regularization solution, accompanied by a detailed proof. Under this condition, we adapt the SVD methodology to effectively demonstrate its capacity to minimize the functional incorporation of the non-smooth even non-convex regularization term.

\begin{definition}\label{def3.2}
    For $t\in\mathbb{R}$ and $\alpha>0$, the function $\mathcal{S}_{\alpha}$ is defined as follows
\begin{align}\label{equ3.3}
    \mathcal{S}_{\alpha}(t)=
    \left\{             
             \begin{array}{ll}
             t+\frac{\alpha}{2}, & if \ t\leq -\frac{\alpha}{2}, \\
             0, & if \ |t|\leq \frac{\alpha}{2}, \\
             t-\frac{\alpha}{2}, & if \ t\geq \frac{\alpha}{2}. 
             \end{array}
    \right.
\end{align}
Thus, $\mathcal{S}_{\alpha}$ is referred to as a soft thresholding function mapping $\mathbb{R}$ to $\mathbb{R}$.
\end{definition}

\begin{theorem}\label{the3.3}
    Let $K: X\rightarrow Y$ is a linear compact operator and let $\{(\sigma_{n},u_{n},v_{n})\}_{n\in\mathbb{N}}$ denote a singular system of $K$. If the operator $K$ happens to be diagonal in the $v_{n}$-basis and a minimum solution $x_{\alpha}^{\delta}$ exists for the functional $\Phi_{\alpha}\left(x\right)$ defined in \eqref{equ3.2}, it can be expressed as follows
    \begin{equation}\label{equ3.4}
        x_{\alpha}^{\delta}=\sum_{n\in\mathbb{N}}\mathcal{S}_{\frac{\alpha}{\sigma_{n}^{2}}}\left(\frac{\left< y^{\delta},u_{n}\right>}{\sigma_{n}}\right)v_{n}
        =\sum_{n\in\mathbb{N}}\frac{1}{\sigma_{n}^{2}}\mathcal{S}_{\alpha}(\sigma_{n}\left< y^{\delta},u_{n}\right>)v_{n},
    \end{equation}
    where $\mathcal{S}_{\alpha}$ is the thresholding function defined in Definition \ref{def3.2}. Furthermore, if $x_{\alpha}^{\delta}$ be represented in the form given by \eqref{equ3.4}, it constitutes a minimum solution to the functional  $\Phi_{\alpha}\left(x\right)$ in \eqref{equ3.2} for a regularization parameter $\alpha >0$.    
\end{theorem}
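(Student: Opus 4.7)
The plan is to exploit the diagonality of $K$ in the $v_n$-basis to decouple the functional $\Phi_\alpha$ into a countable collection of independent one-dimensional problems, each of which can be solved in closed form via soft thresholding.

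First I would expand an arbitrary $x\in X$ in the $v_n$-basis as $x=\sum_{n}x_{n}v_{n}$ with $x_{n}=\langle x,v_{n}\rangle$, and write $y^{\delta}=\sum_{n}y_{n}^{\delta}u_{n}+r$ with $y_{n}^{\delta}=\langle y^{\delta},u_{n}\rangle$ and $r$ in the orthogonal complement of the closed span of $\{u_{n}\}$. Using $Kv_{n}=\sigma_{n}u_{n}$ from Lemma \ref{lem2.5} together with Parseval's identity for the orthonormal systems $\{v_{n}\}$ and $\{u_{n}\}$, the fidelity term decomposes as $\|Kx-y^{\delta}\|_{Y}^{2}=\|r\|_{Y}^{2}+\sum_{n}(\sigma_{n}x_{n}-y_{n}^{\delta})^{2}$, and consequently
\begin{equation*}
\Phi_{\alpha}(x)=\|r\|_{Y}^{2}+\sum_{n\in\mathbb{N}}\Bigl[(\sigma_{n}x_{n}-y_{n}^{\delta})^{2}+\alpha|x_{n}|\Bigr].
\end{equation*}
Because the coordinates $x_{n}$ are decoupled and $\|r\|_{Y}^{2}$ is independent of $x$, minimizing $\Phi_{\alpha}$ reduces to minimizing each scalar function $f_{n}(t):=(\sigma_{n}t-y_{n}^{\delta})^{2}+\alpha|t|$ separately.

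Next I would solve each scalar problem. Since $\sigma_{n}>0$, the function $f_{n}$ is strictly convex and coercive, so a unique minimizer $t_{n}^{*}$ exists and is characterized by the subdifferential inclusion $0\in 2\sigma_{n}^{2}t-2\sigma_{n}y_{n}^{\delta}+\alpha\,\partial|t|$. A standard case analysis on the sign of $t$ (the three cases $t>0$, $t<0$, $t=0$ matching the three branches of $\partial|t|$) yields
\begin{equation*}
t_{n}^{*}=\mathcal{S}_{\alpha/\sigma_{n}^{2}}\!\left(\frac{\langle y^{\delta},u_{n}\rangle}{\sigma_{n}}\right),
\end{equation*}
where $\mathcal{S}$ is the thresholding function of Definition \ref{def3.2}. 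Setting $x_{\alpha}^{\delta}=\sum_{n}t_{n}^{*}v_{n}$ then gives the first equality in \eqref{equ3.4}.

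To obtain the second equality, I would verify the rescaling identity $\mathcal{S}_{\alpha/\sigma_{n}^{2}}(s/\sigma_{n})=(1/\sigma_{n}^{2})\,\mathcal{S}_{\alpha}(\sigma_{n}s)$ directly from \eqref{equ3.3} by checking the three cases against the threshold $\alpha/2$; this makes the two forms in \eqref{equ3.4} identical. For the converse implication, if $x_{\alpha}^{\delta}$ is defined by \eqref{equ3.4}, then each of its coordinates along $v_{n}$ satisfies the corresponding scalar subgradient optimality condition by construction, hence minimizes each $f_{n}$, and therefore minimizes the decoupled sum, giving a minimizer of $\Phi_{\alpha}$.

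The main obstacle I anticipate is not any single computation but rather the interpretation of the hypothesis that $K$ is \emph{diagonal in the $v_{n}$-basis}: one must be precise that $\{v_{n}\}_{n\in\mathbb{N}}$ constitutes a complete orthonormal basis of $X$ (rather than only of $\overline{\mathcal{R}(K^{*})}$), since otherwise the expansion $x=\sum_{n}\langle x,v_{n}\rangle v_{n}$ loses a null-space component on which the penalty term vanishes but the data misfit can still be $0$. Under the complete-basis reading, the remainder of the argument is a routine summation of one-dimensional LASSO optimality conditions, and convergence of all series involved is automatic from $x_{\alpha}^{\delta}\in X$ and the boundedness of $\mathcal{S}_{\alpha}$.
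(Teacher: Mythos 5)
Your proof is correct, and its first half coincides with the paper's: both reduce $\Phi_{\alpha}$ via the singular system to the decoupled sum $\sum_{n}\left[(\sigma_{n}x_{n}-y_{n}^{\delta})^{2}+\alpha|x_{n}|\right]$ (the paper's \eqref{equ3.5}) and solve coordinatewise by the three-case analysis yielding soft thresholding, including the rescaling identity behind the two forms in \eqref{equ3.4}. Where you genuinely diverge is in the variational bookkeeping and in the converse direction. The paper writes the stationarity condition \eqref{equ3.6} with ${\rm sign}\left(\left(x_{\alpha}^{\delta}\right)_{n}\right)$, which is literally meaningful only on the support and handles the zero coordinates by fiat; your subdifferential inclusion $0\in 2\sigma_{n}^{2}t-2\sigma_{n}y_{n}^{\delta}+\alpha\,\partial|t|$ covers $t=0$ rigorously. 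For the converse, the paper expands $\Phi_{\alpha}\left(x_{\alpha}^{\delta}+h\right)$ directly and checks term by term that $\Phi_{\alpha}\left(x_{\alpha}^{\delta}+h\right)-\Phi_{\alpha}\left(x_{\alpha}^{\delta}\right)\geq\left\|Kh\right\|_{Y}^{2}$, splitting indices over the support $\Lambda$; you instead invoke strict convexity and coercivity of each scalar $f_{n}$, so coordinatewise optimality immediately yields a global minimizer. Your route is shorter and gives uniqueness of the coefficients for free, while the paper's expansion buys the explicit quantitative gap $\left\|Kh\right\|_{Y}^{2}$, which is precisely what Remark \ref{rem3.3} uses to conclude strict minimality when ${\rm Ker}(K)=\{0\}$. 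You are also more careful on two points the paper passes over silently: you retain the residual $\|r\|_{Y}^{2}$ coming from the component of $y^{\delta}$ orthogonal to $\overline{\mathcal{R}(K)}$, which is absent from \eqref{equ3.5}, and you correctly flag that ``diagonal in the $v_{n}$-basis'' must mean $\{v_{n}\}$ is complete in $X$, since otherwise a null-space component is invisible to both terms of $\Phi_{\alpha}$ and the representation \eqref{equ3.4} can hold only up to an element of $\mathcal{N}(K)$ --- the same caveat underlying Remark \ref{rem3.3}. One small repair to your closing remark: square-summability of the coefficients does not follow from boundedness of $\mathcal{S}_{\alpha}$ alone, but it does follow because $\sigma_{n}\left<y^{\delta},u_{n}\right>\rightarrow 0$, so for fixed $\alpha>0$ the threshold annihilates all but finitely many coordinates and the series in \eqref{equ3.4} is in fact a finite sum.
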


\begin{proof}
    By Lemma \ref{lem2.5} and the operator $K$ is diagonal in the $v_{n}$-basis, the function $\Phi_{\alpha}(x)$ from equation \eqref{equ3.22} can be expressed as follows
    \begin{align}\label{equ3.5}
        \Phi_{\alpha}(x)
        & =\|Kx-y^{\delta}\|_{Y}^{2}+\sum_{n\in\mathbb{N}}\alpha|\left<x,v_{n}\right>| \nonumber\\
        & =\sum_{n\in\mathbb{N}}\left[|\sigma_{n}x_{n}-y^{\delta}_{n}|^{2}+\alpha|x_{n}|\right],
    \end{align}
    where $x_{n}$ represents $\left<x,v_{n}\right>$ and $y^{\delta}_{n}$ indicates $\left<y^{\delta},u_{n}\right>$ for the singular system $\{(\sigma_{n},v_{n},u_{n})\}_{n\in\mathbb{N}}$ of $K$. If $x_{\alpha}^{\delta}$ is a minimum solution of \eqref{equ3.2}, it must satisfy the first-order optimality condition
    \begin{equation}\label{equ3.6}
        2\sigma_{n}\left(\sigma_{n}\left(x_{\alpha}^{\delta}\right)_{n}-y^{\delta}_{n}\right)+\alpha{\rm{sign}}\left(\left(x_{\alpha}^{\delta}\right)_{n}\right)=0.
    \end{equation}
    For the case where $\left(x_{\alpha}^{\delta}\right)_{n}>0$, this leads to the expression $\left(x_{\alpha}^{\delta}\right)_{n}=(1/\sigma_{n}^{2})(\sigma_{n}y^{\delta}_{n}-\alpha/2)$, valid under the condition that $\sigma_{n}y^{\delta}_{n} >\alpha/2$. Conversely, if $\left(x_{\alpha}^{\delta}\right)_{n}<0$, we find $\left(x_{\alpha}^{\delta}\right)_{n}=(1/\sigma_{n}^{2})(\sigma_{n}y^{\delta}_{n}+\alpha/2)$, applicable only when $\sigma_{n}y^{\delta}_{n} <-\alpha/2$. For the scenario where $|\sigma_{n}y^{\delta}_{n}| <\alpha/2$, we set $\left(x_{\alpha}^{\delta}\right)_{n}=0$. In summary, we have
    \begin{equation*}
        \left(x_{\alpha}^{\delta}\right)_{n}=\frac{1}{\sigma_{n}^{2}}\mathcal{S}_{\alpha}(\sigma_{n}\left<y^{\delta}_{n}, u_{n}\right>) 
    \end{equation*}
    and
    \begin{equation*}
        x_{\alpha}^{\delta}=\sum_{n\in\mathbb{N}}\frac{1}{\sigma_{n}^{2}}\mathcal{S}_{\alpha}(\sigma_{n}\left<y^{\delta},u_{n}\right>)v_{n}, 
    \end{equation*}
    where the function $\mathcal{S}_{\alpha}$ is defined in equation \eqref{equ3.3}. 
    
    \par Next, we demonstrate that if \eqref{equ3.4} holds, then $x_{\alpha}^{\delta}$ is a minimum solution for the functional $\Phi_{\alpha}\left(x\right)$ defined in \eqref{equ3.2}. For any $h\in X$, we define $h_{n}=\left<h, v_{n}\right>$. Consequently, we have
    \begin{align}\label{equ3.7}
        \Phi_{\alpha}\left(x_{\alpha}^{\delta}+h\right)
        & =\left\|Kx_{\alpha}^{\delta}+Kh-y^{\delta}\right\|_Y^{2}+ \sum_{n\in\mathbb{N}}\alpha\left|\left(x_{\alpha}^{\delta}\right)_{n}+h_{n}\right| \nonumber\\
        & =\left\|Kx_{\alpha}^{\delta}-y^{\delta}\right\|_Y^{2}+ 2\left<Kx_{\alpha}^{\delta}-y^{\delta},Kh\right>+\left\|Kh\right\|_Y^{2}+ \sum_{n\in\mathbb{N}}\alpha\left|\left(x_{\alpha}^{\delta}\right)_{n}+h_{n}\right| \nonumber\\
        & =\Phi_{\alpha}\left(x_{\alpha}^{\delta}\right)+\left\|Kh\right\|_Y^{2}+ \sum_{n\in\mathbb{N}}\alpha\left|(\left(x_{\alpha}^{\delta}\right)_{n}+h_{n}\right| -\sum_{n\in\mathbb{N}}\alpha\left|\left(x_{\alpha}^{\delta}\right)_{n}\right|+ 2\left<Kx_{\alpha}^{\delta}-y^{\delta},Kh\right>. 
    \end{align}
    Since $x_{\alpha}^{\delta}$ can be expressed as in \eqref{equ3.4}, it satisfies the first-order optimality condition given in \eqref{equ3.6}. Therefore, we have
    \begin{equation*}
        2\left<Kx_{\alpha}^{\delta}-y^{\delta},Kh\right>= \sum_{n\in\mathbb{N}}2\sigma_{n}\left(\sigma_{n}\left(x_{\alpha}^{\delta}\right)_{n} -y^{\delta}_{n}\right) h_{n} =\sum_{n\in\mathbb{N}}-\alpha{\rm{sign}} \left(\left(x_{\alpha}^{\delta}\right)_{n}\right)h_{n}.
    \end{equation*}
    Let us define $\Lambda:=\left\{n\in\mathbb{N} \mid \left(x_{\alpha}^{\delta}\right)_{n}\neq 0\right\}$. Then \eqref{equ3.7} can be recast as
    \begin{align*}
        \Phi_{\alpha}\left(x_{\alpha}^{\delta}+h\right)-\Phi\left(x_{\alpha}^{\delta}\right)
        & =\left\|Kh\right\|_Y^{2}+\sum_{n\notin\Lambda}\left[\alpha|h_{n}|- 2h_{n}\left<K^{*}y^{\delta},v_{n}\right>\right] \nonumber\\
        & \quad +\sum_{n\in\Lambda}\left\{\alpha\left|\left(x_{\alpha}^{\delta}\right)_{n}+h_{n}\right|- \alpha\left|\left(x_{\alpha}^{\delta}\right)_{n}\right| -\alpha{\rm{sign}}\left(\left(x_{\alpha}^{\delta}\right)_{n}\right)h_{n}\right\}.
    \end{align*}
    For $n\notin\Lambda$, since $\left(x_{\alpha}^{\delta}\right)_{n}=0$, it follows from \eqref{equ3.3} that $|\left<K^{*}y^{\delta},v_{n}\right>|\leq\alpha/2$. Thus, we can assert
    \begin{equation*}
        \alpha|h_{n}|-2h_{n}\left<K^{*}y^{\delta},v_{n}\right>\geq 0.
    \end{equation*}
    Now consider the case where $n\in\Lambda$ meaning $\left(x_{\alpha}^{\delta}\right)_{n}\neq 0$. If $\left(x_{\alpha}^{\delta}\right)_{n}>0$, then
    \begin{equation*}
        \alpha\left|\left(x_{\alpha}^{\delta}\right)_{n}+h_{n}\right|- \alpha\left|\left(x_{\alpha}^{\delta}\right)_{n}\right|+h_{n}\left[-\alpha{\rm sign}\left(\left(x_{\alpha}^{\delta}\right)_{n}\right)\right] =\alpha\left[\left|\left(x_{\alpha}^{\delta}\right)_{n}+h_{n}\right|- \left(\left(x_{\alpha}^{\delta}\right)_{n}+h_{n}\right)\right]\geq 0.
    \end{equation*}
    Conversely, if $\left(x_{\alpha}^{\delta}\right)_{n}<0$, then
    \begin{equation*}
        \alpha\left|\left(x_{\alpha}^{\delta}\right)_{n}+h_{n}\right|- \alpha\left|\left(x_{\alpha}^{\delta}\right)_{n}\right|+h_{n}\left[-\alpha{\rm sign}\left(\left(x_{\alpha}^{\delta}\right)_{n}\right)\right] =\alpha\left[\left|\left(x_{\alpha}^{\delta}\right)_{n}+h_{n}\right|+ \left(\left(x_{\alpha}^{\delta}\right)_{n}+h_{n}\right)\right]\geq 0.
    \end{equation*}
    Thus, we can easily conclude that
    \begin{equation*}
        \Phi\left(x_{\alpha}^{\delta}+h\right)-\Phi\left(x_{\alpha}^{\delta}\right) \geq\left\|Kh\right\|_Y^{2}.
    \end{equation*}
    As a result, $x_{\alpha}^{\delta}$ given in \eqref{equ3.4} is indeed a minimum solution for \eqref{equ3.5}. 
\end{proof}

\begin{remark}\label{rem3.3}
If the null subspace of $K$ contains only the zero element, i.e., $Ker(K)=\{0\}$, then the inequality stated above holds strictly for $h\neq 0$. This implication indicates that $x_{\alpha}^{\delta}$ in \eqref{equ3.4} is the unique global minimum solution of $\Phi_{\alpha}(x)$.
\end{remark}  

\par Although Theorem \ref{the3.3} specifies that the operator $K$ is diagonal under a specific orthogonal bases, it suggests a new SVD method for a more general $K$. As $\alpha\rightarrow 0$, the nonlinear soft thresholding function $\mathcal{S}_{\alpha}$ in \eqref{equ3.4} degenerates into an identity operator and \eqref{equ3.4} degenerates to \eqref{equ2.4}. Therefore, we further prove \eqref{equ3.4} is a regularization strategy, whether the operator $K$ is diagonal or not. To demonstrate the regularization properties of \eqref{equ3.4}, we first recall the concept of a regularization strategy, for details see \cite[Chapter 2]{K1996}.

\begin{definition}\label{def3.5}
    A regularization strategy is defined as a family of linear and bounded operators
    \begin{equation*}
        \mathcal{R}_{\alpha}:Y\rightarrow X\ ,\ 0<\alpha<1
    \end{equation*}
    such that
    \begin{equation*}
        \lim_{\alpha\rightarrow 0}\mathcal{R}_{\alpha}Kx=x\ ,\ for\ all\ x\in X.
    \end{equation*}
\end{definition}

\begin{theorem}\label{the3.6}
    Let $K: X\rightarrow Y$ be a linear compact operator with a singular system denoted by $\left\{\left(\sigma_{n},v_{n},u_{n}\right)\right\}_{n\in\mathbb{N}}$. For every $\alpha>0$ and $0<\sigma_{n}\leq\|K\|$, we suppose there exists $c(\alpha)$ such that
    \begin{equation*}
        c(\alpha)\sigma_{n}\geq 1,
    \end{equation*}
    where $\sigma_{n}$ satisfies $\left|\sigma_{n}\left<y^{\delta}, u_{n}\right>\right|>\frac{\alpha}{2}$. Then the operator $\mathcal{R}_{\alpha}:Y\rightarrow \ell_{2}$, $\alpha>0$, defined by
    \begin{equation*}
        \mathcal{R}_{\alpha}y^{\delta}:=\sum_{n\in\mathbb{N}}\frac{1}{\sigma_{n}^2}\mathcal{S}_{\alpha}(\sigma_{n}\left<y^{\delta},u_{n}\right>)v_{n}\ ,\ y^{\delta}\in Y
    \end{equation*}
    constitutes a regularization strategy with $\|\mathcal{R}_{\alpha}\|\leq c(\alpha)$.
\end{theorem}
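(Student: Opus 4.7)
The plan is to verify the two ingredients of a regularization strategy in the sense of Definition \ref{def3.5} --- a uniform norm bound $\|\mathcal{R}_\alpha y^\delta\|_X \le c(\alpha)\|y^\delta\|_Y$ and the pointwise convergence $\mathcal{R}_\alpha K x \to x$ as $\alpha \to 0$ for every $x \in X$ --- with the understanding that $\mathcal{R}_\alpha$ is genuinely nonlinear through the soft thresholding $\mathcal{S}_\alpha$, so the definition should be read in the nonlinear sense motivated by the paragraph preceding Theorem \ref{the3.1}.

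For boundedness one begins from the elementary estimate $|\mathcal{S}_\alpha(t)| \le |t|$, which follows by inspection of the three pieces of Definition \ref{def3.2}, together with the vanishing $\mathcal{S}_\alpha(t) = 0$ whenever $|t| \le \alpha/2$. Only indices with $|\sigma_n\langle y^\delta, u_n\rangle| > \alpha/2$ contribute to the expansion, and on these the hypothesis supplies $1/\sigma_n \le c(\alpha)$. This yields
\[
\|\mathcal{R}_\alpha y^\delta\|_X^2 = \sum_{|\sigma_n\langle y^\delta, u_n\rangle|>\alpha/2} \frac{|\mathcal{S}_\alpha(\sigma_n\langle y^\delta, u_n\rangle)|^2}{\sigma_n^4} \le \sum_{|\sigma_n\langle y^\delta, u_n\rangle|>\alpha/2} \frac{|\langle y^\delta, u_n\rangle|^2}{\sigma_n^2} \le c(\alpha)^2 \|y^\delta\|_Y^2,
\]
where Bessel's inequality closes the estimate and gives $\|\mathcal{R}_\alpha\| \le c(\alpha)$.

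For the regularization limit, Lemma \ref{lem2.6} gives $\langle K x, u_n\rangle = \sigma_n\langle x, v_n\rangle$, hence
\[
\mathcal{R}_\alpha K x = \sum_{n} \frac{\mathcal{S}_\alpha(\sigma_n^2\langle x, v_n\rangle)}{\sigma_n^2}\, v_n, \qquad r_n(\alpha) := \frac{\mathcal{S}_\alpha(\sigma_n^2\langle x, v_n\rangle)}{\sigma_n^2} - \langle x, v_n\rangle,
\]
so that $\|\mathcal{R}_\alpha K x - x\|_X^2 = \sum_n |r_n(\alpha)|^2$. A case split on the threshold $\alpha/2$ produces $|r_n(\alpha)| = \alpha/(2\sigma_n^2)$ in the ``large'' regime $|\sigma_n^2\langle x, v_n\rangle| > \alpha/2$ and $|r_n(\alpha)| = |\langle x, v_n\rangle|$ in the ``small'' regime, and in both cases one obtains the crucial uniform envelope $|r_n(\alpha)| \le |\langle x, v_n\rangle|$. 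For each fixed $n$ the small regime is vacated as soon as $\alpha < 2\sigma_n^2|\langle x, v_n\rangle|$, so $r_n(\alpha) \to 0$ pointwise in $n$; dominated convergence on the counting measure, with summable majorant $|\langle x, v_n\rangle|^2$, then delivers $\sum_n |r_n(\alpha)|^2 \to 0$.

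The main obstacle sits in that second regime: the soft threshold annihilates coefficients below $\alpha/2$ outright, so no uniform quantitative bound of the form $C\alpha/\sigma_n^2$ can control $|r_n(\alpha)|$ in both regimes simultaneously. The argument must instead couple a quantitative $\alpha$-scale bound in the large regime with the qualitative observation that, for each fixed $n$, the small regime eventually becomes empty as $\alpha \to 0$. The fact that these two pieces fit under a single summable envelope $|\langle x, v_n\rangle|$ is what makes the dominated-convergence step work and is, in my view, the technical core of the theorem; a similar caveat concerning the possible null-space component of $x$ and the completeness of $\{v_n\}$ in $X$ (implicit in Definition \ref{def3.5}) would be worth remarking on in passing.
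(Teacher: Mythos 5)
Your proof is correct and follows essentially the same route as the paper: your boundedness estimate via $|\mathcal{S}_{\alpha}(t)|\leq|t|$ on the above-threshold indices together with $c(\alpha)\sigma_{n}\geq 1$ is precisely the paper's case-by-case expansion over $\mathbb{N}_{1},\mathbb{N}_{2},\mathbb{N}_{3}$ in compressed form, and your convergence argument --- error $\alpha/(2\sigma_{n}^{2})$ above the threshold, $|\left<x,v_{n}\right>|$ below it, both fitting under the summable envelope $|\left<x,v_{n}\right>|$ --- reproduces the paper's three-set splitting, with your dominated-convergence step being the paper's explicit $\epsilon$--$N$ tail split in different packaging. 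Your closing caveat about the null space of $K$ and the completeness of $\{v_{n}\}$ in $X$ is a fair observation that the paper leaves implicit in Lemma \ref{lem2.5} but it does not alter the substance of the argument.
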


\begin{proof}
    We begin by establishing the boundedness of the operator $\mathcal{R}_{\alpha}$. We define the sets $\mathbb{N}_{1}=\{n\in\mathbb{N}\mid\sigma_{n}\left<y^{\delta},u_{n}\right>\leq -\frac{\alpha}{2}\}$, $\mathbb{N}_{2}=\{n\in\mathbb{N}\mid \sigma_{n}\left<y^{\delta},u_{n}\right>\geq \frac{\alpha}{2}\}$ and $\mathbb{N}_{3}=\{n\in\mathbb{N}\mid |\sigma_{n}\left<y^{\delta},u_{n}\right>|<\frac{\alpha}{2}\}$. It is evident that $\mathbb{N}=\mathbb{N}_{1}\cup\mathbb{N}_{2}\cup\mathbb{N}_{3}$. We have  
    \begin{equation*}
        \|\mathcal{R}_{\alpha}y^{\delta}\|^2=\sum_{n\in\mathbb{N}_{1}\cup\mathbb{N}_{2}\cup\mathbb{N}_{3}}\left[\frac{1}{\sigma_{n}^{2}}\mathcal{S}_{\alpha}(\sigma_{n}\left<y^{\delta},u_{n}\right>)\right]^2.
    \end{equation*}
    For $n\in\mathbb{N}_{1}$, we have
    \begin{align}\label{equ3.8}
        \sum_{n\in\mathbb{N}_{1}}\left[\frac{1}{\sigma_{n}^{2}}\mathcal{S}_{\alpha}(\sigma_{n}\left<y^{\delta},u_{n}\right>)\right]^2
        & =\sum_{n\in\mathbb{N}_{1}}\frac{1}{\sigma_{n}^{4}}\left[(\sigma_{n}\left<y^{\delta},u_{n}\right>)+\frac{\alpha}{2}\right]^2 \nonumber\\
        & =\sum_{n\in\mathbb{N}_{1}}\left[\frac{|\left<y^{\delta},u_{n}\right>|^2}{\sigma_{n}^{2}}+\frac{\alpha\left<y^{\delta},u_{n}\right>}{\sigma_{n}^{3}}+\frac{\alpha^{2}}{4\sigma_{n}^4}\right].
    \end{align}
    Since $n\in\mathbb{N}_{1}$, we know that
    \begin{equation*}
        \left<y^{\delta},u_{n}\right>\leq-\frac{\alpha}{2\sigma_{n}}.
    \end{equation*}
    Consequently, we get
    \begin{equation*}
       \frac{\alpha\left<y^{\delta},u_{n}\right>}{\sigma_{n}^{3}}+\frac{\alpha^{2}}{4\sigma_{n}^4}\leq 0.
    \end{equation*}
    Thus, equation \eqref{equ3.8} leads to the conclusion that
    \begin{equation}\label{equ3.9}
        \sum_{n\in\mathbb{N}_{1}}\left[\frac{1}{\sigma_{n}^{2}}\mathcal{S}_{\alpha}(\sigma_{n}\left<y^{\delta},u_{n}\right>)\right]^2 \leq\sum_{n\in\mathbb{N}_{1}}\frac{1}{\sigma_{n}^2}|\left<y^{\delta},u_{n}\right>|^2.
    \end{equation}
    For $n\in\mathbb{N}_{2}$, we find
    \begin{align}\label{equ3.10}
         \sum_{n\in\mathbb{N}_{2}}\left[\frac{1}{\sigma_{n}^{2}}\mathcal{S}_{\alpha}(\sigma_{n}\left<y^{\delta},u_{n}\right>)\right]^2
         & =\sum_{n\in\mathbb{N}_{2}}\frac{1}{\sigma_{n}^{4}}\left[(\sigma_{n}\left<y^{\delta},u_{n}\right>)-\frac{\alpha}{2}\right]^2 \nonumber\\
         & =\sum_{n\in\mathbb{N}_{2}}\frac{1}{\sigma_{n}^{4}}\left[(\sigma_{n}^{2}|\left<y^{\delta},u_{n}\right>|^{2})-\alpha\sigma_{n}\left<y,u_{n}\right>+\frac{\alpha^{2}}{4}\right]. 
    \end{align}
    For $n\in\mathbb{N}_{2}$, we can infer that
    \begin{equation*}
        \left<y^{\delta},u_{n}\right>\geq\frac{\alpha}{2\sigma_{n}}.
    \end{equation*}
    Consequently,
    \begin{equation*}
        -\alpha\sigma_{n}\left<y^{\delta},u_{n}\right>+\frac{\alpha^{2}}{4}\leq 0.
    \end{equation*}
    Therefore, equation \eqref{equ3.10} yields that
    \begin{equation}\label{equ3.11}
        \sum_{n\in\mathbb{N}_{2}}\left[\frac{1}{\sigma_{n}^{2}}\mathcal{S}_{\alpha}(\sigma_{n}\left<y^{\delta},u_{n}\right>)\right]^2 \leq\sum_{n\in\mathbb{N}_{2}}\frac{1}{\sigma_{n}^2}|\left<y^{\delta},u_{n}\right>|^2.
    \end{equation}
    For $n\in\mathbb{N}_{3}$, we obtain
    \begin{equation}\label{equ3.12}
       \sum_{n\in\mathbb{N}_{3}}\left[\frac{1}{\sigma_{n}^{2}}\mathcal{S}_{\alpha} (\sigma_{n}\left<y^{\delta},u_{n}\right>)\right]^2=0.
    \end{equation}
    By combining \eqref{equ3.9}, \eqref{equ3.11} and \eqref{equ3.12}, we derive
    \begin{align*}
        \|\mathcal{R}_{\alpha}y^{\delta}\|^{2}
        & = \sum_{n\in\mathbb{N}_{1}\cup\mathbb{N}_{2}\cup\mathbb{N}_{3}}\left[\frac{1}{\sigma_{n}^{2}}\mathcal{S}_{\alpha}(\sigma_{n}\left<y^{\delta},u_{n}\right>)\right]^2 \\ 
        & = \sum_{n\in\mathbb{N}_{1}}\left[\frac{1}{\sigma_{n}^{2}}\mathcal{S}_{\alpha}(\sigma_{n}\left<y^{\delta},u_{n}\right>)\right]^2+\sum_{n\in\mathbb{N}_{2}}\left[\frac{1}{\sigma_{n}^{2}}\mathcal{S}_{\alpha}(\sigma_{n}\left<y^{\delta},u_{n}\right>)\right]^2+\sum_{n\in\mathbb{N}_{3}}\left[\frac{1}{\sigma_{n}^{2}}\mathcal{S}_{\alpha}(\sigma_{n}\left<y^{\delta},u_{n}\right>)\right]^2 \\
        & \leq \sum_{n\in\mathbb{N}_{1}}\frac{1}{\sigma_{n}^2}|\left<y^{\delta},u_{n}\right>|^2+ \sum_{n\in\mathbb{N}_{2}}\frac{1}{\sigma_{n}^2}|\left<y^{\delta},u_{n}\right>|^2+0 \\
        & = \sum_{n\in\mathbb{N}_{1}\cup\mathbb{N}_{2}}\frac{1}{\sigma_{n}^2}|\left<y^{\delta},u_{n}\right>|^2 \leq c(\alpha)^{2}\|y^{\delta}\|^{2}.
    \end{align*}
    Thus, we conclude that $\|\mathcal{R}_{\alpha}\|\leq c(\alpha)$.
    
    \par Next, we aim to demonstrate the convergence of $\mathcal{R}_{\alpha}Kx$ to $x$ as $\alpha\rightarrow 0$. By invoking Lemma \ref{lem2.6}, we have $\sigma_{n}\left<y,u_{n}\right>=\sigma_{n}^{2}\left<x,v_{n}\right>$. For every $\epsilon>0$, there exist an $N\in\mathbb{N}$ such that for any fixed $x\in X$,
    \begin{equation*}
        \sum_{n=N+1}^{\infty}|\left<x,v_{n}\right>|^{2}\leq\frac{\epsilon^{2}}{2}.
    \end{equation*}
    As $\alpha\rightarrow 0$, for all $i=1,\dots,N$, we derive the following inequality
    \begin{equation*}
        |\alpha-0|^{2}\leq \frac{2\sigma_{i}^{4}}{N}\epsilon^{2} \leq \frac{2\|K\|^{4}}{N}\epsilon^{2}.
    \end{equation*}
     For $n\in\mathbb{N}_{1}$, where $\sigma_{n}^{2}\left<x,v_{n}\right> >\frac{\alpha}{2}$, we have
    \begin{align}\label{equ3.13}
        \sum_{n\in\mathbb{N}_{1}}\left\|\frac{1}{\sigma_{n}^2}\mathcal{S}_{\alpha}(\sigma_{n}\left<Kx,u_{n}\right>)v_{n}-\left<x,v_{n}\right>v_{n}\right\|^2
        & = \sum_{n\in\mathbb{N}_{1}}\left\|\frac{1}{\sigma_{n}^2}(\sigma_{n}^{2}\left<x,v_{n}\right>v_{n}+\frac{\alpha}{2}v_{n})-\left<x,v_{n}\right>v_{n}\right\|^{2} \nonumber\\
        & =\sum_{n\in\mathbb{N}_{1}}\frac{\alpha^{2}}{4\sigma_{n}^{4}} \nonumber\\
        & \leq \sum_{\{n\in\mathbb{N}_{1}\}\cap\{n\leq N\}}\frac{\alpha^{2}}{4\sigma_{n}^{4}}+\sum_{\{n\in\mathbb{N}_{1}\}\cap \{n> N\}}|\left<x,v_{n}\right>|^{2}.
    \end{align}
    For $n\in\mathbb{N}_{2}$, where $\sigma_{n}^{2}\left<x,v_{n}\right> <-\frac{\alpha}{2}$, we find
    \begin{align}\label{equ3.14}
        \sum_{n\in\mathbb{N}_{2}}\left\|\frac{1}{\sigma_{n}^2}\mathcal{S}_{\alpha}(\sigma_{n}\left<Kx,u_{n}\right>)v_{n}-\left<x,v_{n}\right>v_{n}\right\|^2
        & = \sum_{n\in\mathbb{N}_{2}}\left\|\frac{1}{\sigma_{n}^2}(\sigma_{n}^{2}\left<x,v_{n}\right>v_{n}-\frac{\alpha}{2}v_{n})-\left<x,v_{n}\right>v_{n}\right\|^2 \nonumber\\
        & =\sum_{n\in\mathbb{N}_{2}}\frac{\alpha^{2}}{4\sigma_{n}^{4}} \nonumber\\
        & \leq \sum_{\{n\in\mathbb{N}_{2}\}\cap\{n\leq N\}}\frac{\alpha^{2}}{4\sigma_{n}^{4}}+\sum_{\{n\in\mathbb{N}_{2}\}\cap \{n> N\}}|\left<x,v_{n}\right>|^{2}.
    \end{align}
    For $n\in\mathbb{N}_{3}$, where $|\sigma_{n}^{2}\left<x,v_{n}\right>| \leq\frac{\alpha}{2}$, we derive
    \begin{align}\label{equ3.15}
        \sum_{n\in\mathbb{N}_{3}}\left\|\frac{1}{\sigma_{n}^2}\mathcal{S}_{\alpha}(\sigma_{n}\left<Kx,u_{n}\right>)v_{n}-\left<x,v_{n}\right>v_{n}\right\|^2
        & =\sum_{n\in\mathbb{N}_{3}}\left\|\left<x,v_{n}\right>v_{n}\right\|^{2}=\sum_{n\in\mathbb{N}_{3}}|\left<x,v_{n}\right>|^2 \nonumber\\
        & \leq \sum_{\{n\in\mathbb{N}_{3}\}\cap\{n\leq N\}}\frac{\alpha^{2}}{4\sigma_{n}^{4}}+\sum_{\{n\in\mathbb{N}_{3}\}\cap \{n> N\}}|\left<x,v_{n}\right>|^{2}.
    \end{align} 
    By combining equations \eqref{equ3.13}, \eqref{equ3.14}and \eqref{equ3.15}, we obtain the following result
    \begin{align*}
        \|\mathcal{R}_{\alpha}Kx-x\|^2
        & = \sum_{n\in\mathbb{N}_{1}\cup\mathbb{N}_{2}\cup\mathbb{N}_{3}} \left\|\frac{1}{\sigma_{n}^2}\mathcal{S}_{\alpha}(\sigma_{n}\left<Kx,u_{n}\right>)v_{n}-\left<x,v_{n}\right>v_{n}\right\|^2 \nonumber\\
        & \leq \left[\sum_{\{n\in\mathbb{N}_{1}\}\cap\{n\leq N\}}\frac{\alpha^{2}}{4\sigma_{n}^{4}}+\sum_{\{n\in\mathbb{N}_{2}\}\cap \{n\leq N\}}\frac{\alpha^{2}}{4\sigma_{n}^{4}}+\sum_{\{n\in\mathbb{N}_{3}\}\cap \{n\leq N\}}\frac{\alpha^{2}}{4\sigma_{n}^{4}}\right] \nonumber\\
        & \quad +\left[\sum_{\{n\in\mathbb{N}_{1}\}\cap\{n> N\}}|\left<x,v_{n}\right>|^{2}+\sum_{\{n\in\mathbb{N}_{2}\}\cap\{n> N\}}|\left<x,v_{n}\right>|^{2}+\sum_{\{n\in\mathbb{N}_{3}\}\cap\{n> N\}}|\left<x,v_{n}\right>|^{2}\right] \nonumber\\
        & = \sum_{n=1}^{N}\frac{\alpha^{2}}{4\sigma_{n}^{4}}+\sum_{n=N+1}^{\infty}|\left<x,v_{n}\right>|^{2} \nonumber\\
        & \leq \sum_{n=1}^{N}\frac{2\sigma_{n}^{4}}{N}\epsilon^{2} \frac{1}{4\sigma_{n}^{4}}+\frac{\epsilon^{2}}{2}= \epsilon^{2}.
    \end{align*}
    Thus, since $\{\sigma_{n}\}_{n\in\mathbb{N}}$ represents the ordered sequence of the positive singular values of $K$, we have demonstrated that for every $x\in X$,
    \begin{equation*}
        \mathcal{R}_{\alpha}Kx\rightarrow x\ (\alpha\rightarrow 0).
    \end{equation*}
    This confirms the proof of the theorem.
\end{proof}

\par The algorithm for SVD that employs a soft thresholding function is outlined in Algorithm 1. Subsequently, we delineate a property of the soft threshold function \cite[Lemma 2.2]{DDD04} and present the error estimates associated with this regularization strategy.

\begin{lemma}\label{lem3.7}
    The soft thresholding function $\mathcal{S}_{\alpha}$ is nonexpansive, i.e., for $\forall\ t,t'\in\mathbb{R}$
    \begin{equation*}
        \left|\mathcal{S}_{\alpha}\left(t\right)-\mathcal{S}_{\alpha}\left(t'\right)\right| \leq \left|t-t'\right|.
    \end{equation*} 
\end{lemma}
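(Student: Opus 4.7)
The plan is to verify the inequality by an elementary case analysis, exploiting the piecewise linear structure of $\mathcal{S}_{\alpha}$ in Definition \ref{def3.2}. Without loss of generality I may assume $t\le t'$, and I will partition according to which of the three regions $I_{-}=(-\infty,-\alpha/2]$, $I_{0}=[-\alpha/2,\alpha/2]$, $I_{+}=[\alpha/2,\infty)$ each of $t,t'$ belongs to. Since $t\le t'$, only six configurations occur, and the boundary values $\pm\alpha/2$ make the three branch formulas in \eqref{equ3.3} agree, so the case analysis is unambiguous.

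First I would handle the three diagonal cases, where $t,t'$ lie in the same region. If both lie in $I_{-}$ or both in $I_{+}$, the values $\mathcal{S}_{\alpha}(t)$ and $\mathcal{S}_{\alpha}(t')$ differ by the same constant shift $\pm\alpha/2$, so equality $|\mathcal{S}_{\alpha}(t)-\mathcal{S}_{\alpha}(t')|=|t-t'|$ holds. If both lie in $I_{0}$, then $\mathcal{S}_{\alpha}(t)=\mathcal{S}_{\alpha}(t')=0$, and the inequality is trivial.

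Next I would treat the three mixed cases. When $t\in I_{-}$ and $t'\in I_{0}$, one has $\mathcal{S}_{\alpha}(t)-\mathcal{S}_{\alpha}(t')=t+\alpha/2\le 0$, whence $|\mathcal{S}_{\alpha}(t)-\mathcal{S}_{\alpha}(t')|=-t-\alpha/2\le t'-t=|t-t'|$ using $t'\ge -\alpha/2$. The symmetric case $t\in I_{0}$, $t'\in I_{+}$ is analogous. Finally, when $t\in I_{-}$ and $t'\in I_{+}$, one computes $\mathcal{S}_{\alpha}(t)-\mathcal{S}_{\alpha}(t')=(t+\alpha/2)-(t'-\alpha/2)=t-t'+\alpha$, and since $t'-t\ge \alpha$ in this configuration, $|\mathcal{S}_{\alpha}(t)-\mathcal{S}_{\alpha}(t')|=t'-t-\alpha\le t'-t$.

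There is no real obstacle here: the argument is a mechanical enumeration, and the only mildly delicate point is verifying the mixed cases where the nonexpansiveness is strict. An alternative, more conceptual route would be to observe that $\mathcal{S}_{\alpha}(t)=\mathrm{sign}(t)\,\max(|t|-\alpha/2,0)$ coincides with the proximal mapping of the convex function $t\mapsto (\alpha/2)|t|$, and invoke the standard fact that proximal operators of proper convex lower semicontinuous functions are firmly nonexpansive, hence $1$-Lipschitz; but for this paper the direct case analysis is shorter and self-contained.
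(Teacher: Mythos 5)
Your case analysis is correct: the six configurations under $t\le t'$ are exhaustive, the boundary agreement of the branches of \eqref{equ3.3} at $\pm\alpha/2$ is rightly noted, and each estimate checks out (in particular, in the far mixed case $t\in I_{-}$, $t'\in I_{+}$ you correctly use $t'-t\ge\alpha$ to resolve the sign of $t-t'+\alpha$). The comparison with the paper is somewhat degenerate, however: the paper does not prove Lemma \ref{lem3.7} at all --- it simply cites \cite[Lemma 2.2]{DDD04} for this property. So your proposal supplies a self-contained, elementary argument where the paper outsources the result. It is also worth noting that your closing remark identifies essentially the route taken in the cited literature: in \cite{DDD04} the nonexpansiveness is tied to the variational characterization of $\mathcal{S}_{\alpha}$ as the minimizer of $t\mapsto|s-t|^{2}+\alpha|t|$, i.e., the proximal-map viewpoint, from which $1$-Lipschitz continuity follows from general monotonicity/firm-nonexpansiveness principles. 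The proximal route generalizes immediately (e.g., to the componentwise operator on $\ell_{2}$ and to other convex penalties), while your direct enumeration has the advantage of requiring nothing beyond the piecewise formula, and additionally makes visible where the inequality is strict (the mixed cases) versus an equality (the two outer diagonal cases) --- a distinction relevant to Remark \ref{rem3.3}-type uniqueness discussions. Either argument would be acceptable here; yours has the merit of keeping the paper self-contained.
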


\begin{algorithm}
\caption{SVD algorithm for the $\ell^{1}$-SVD regularization strategy}
\begin{algorithmic}\label{alg1}
\STATE{\textbf{Input:} Given regularization parameter $\alpha>0$, 
observation data $y^{\delta}$ and linear operator $K$.}
\STATE{\qquad $[U^{T},\Sigma,V]$ = SVD($K$)}
\STATE{\qquad Let $U^{T}=[u_{1},u_{2},\cdots,u_{m}]$, $V^{T}=[v_{1},v_{2},\cdots,v_{n}]$, and $\sigma_{i}=\Sigma(i,i)$.}
\STATE{\qquad For $i=1\cdots \min\{m,n\}$ :}
\STATE{\qquad\qquad $x_{i}^{\delta}=(\mathcal{S}_{\alpha}(\sigma_{i}\left<y^{\delta}, u_{i}\right>)/ \sigma_{i}^{2})v_{i}$}
\STATE{\qquad end}
\STATE{\qquad $x_{\alpha}^{\delta}=\sum_{i=1}^{\min\{m,n\}}(x_{i}^{\delta})$}

\STATE{\textbf{Output:} The regularized solution $x_{\alpha}^{\delta}$.}
\end{algorithmic}
\end{algorithm}

\begin{theorem}\label{the3.8}
   Let assumption of the Theorem \ref{the3.6} hold. \\
   (1)There exists $c_{1}\geq\sigma_{n}/\sqrt{\alpha}>0$ for all $\alpha>0$ and $0<\sigma_{n}\leq\|K\|$. If $x\in\mathcal{R}(K^{*})$, then
   \begin{equation}\label{3.16}
       \|\mathcal{R}_{\alpha}Kx-x\|\leq c_{1}\sqrt{\alpha}\|z\|,
   \end{equation}
   where $x=K^{*}z$. \\
   (2)There exists $c_{2}\geq\sigma_{n}^{2}/\alpha>0$ with for all $\alpha>0$ and $0<\sigma_{n}\leq\|K\|$. If $x\in\mathcal{R}(K^{*}K)$, then
   \begin{equation}\label{equ3.17}
       \|\mathcal{R}_{\alpha}Kx-x\|\leq c_{2}\alpha\|z\|,
   \end{equation}
   where $x=K^{*}Kz$.
\end{theorem}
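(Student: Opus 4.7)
\textbf{Proof plan for Theorem \ref{the3.8}.} The natural strategy is to expand $\mathcal{R}_{\alpha}Kx - x$ in the orthonormal basis $\{v_n\}$ and bound each coefficient using the three-branch definition of $\mathcal{S}_{\alpha}$, then invoke the source condition to trade factors of $\sigma_n$ against $\alpha$. First I would use Lemma \ref{lem2.6} to write $\sigma_n\langle Kx, u_n\rangle = \sigma_n^2\langle x, v_n\rangle$, so that
\begin{equation*}
\mathcal{R}_{\alpha}Kx - x \;=\; \sum_{n\in\mathbb{N}}\Bigl[\tfrac{1}{\sigma_n^2}\mathcal{S}_{\alpha}\bigl(\sigma_n^2\langle x, v_n\rangle\bigr) - \langle x, v_n\rangle\Bigr] v_n,
\end{equation*}
and split $\mathbb{N}$ according to whether $|\sigma_n^2\langle x,v_n\rangle|\ge\alpha/2$ (the ``active'' index set $A$) or $<\alpha/2$ (the ``thresholded'' set $B$). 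A direct computation from \eqref{equ3.3} shows that the $n$th coefficient error equals $\alpha/(2\sigma_n^2)$ on $A$ and equals $|\langle x, v_n\rangle|$ on $B$; crucially, the defining inequality of $A$ also yields $\alpha/(2\sigma_n^2)\le |\langle x, v_n\rangle|$, so in both cases the squared error is at most $|\langle x, v_n\rangle|^2$.

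For part (1), I would insert the source representation $\langle x, v_n\rangle = \sigma_n\langle z, u_n\rangle$, which gives $|\langle x, v_n\rangle|^2 = \sigma_n^2|\langle z, u_n\rangle|^2$ for every $n$. The hypothesis $c_1\ge \sigma_n/\sqrt{\alpha}$ then provides the key uniform inequality $\sigma_n^2 \le c_1^2\alpha$, so coefficient-wise one has
\begin{equation*}
\Bigl|\tfrac{1}{\sigma_n^2}\mathcal{S}_{\alpha}\bigl(\sigma_n^2\langle x, v_n\rangle\bigr) - \langle x, v_n\rangle\Bigr|^2 \;\le\; |\langle x,v_n\rangle|^2 \;=\; \sigma_n^2\,|\langle z,u_n\rangle|^2 \;\le\; c_1^2\alpha\,|\langle z,u_n\rangle|^2.
\end{equation*}
Summing over $n$ and invoking Parseval for $z$ produces $\|\mathcal{R}_{\alpha}Kx-x\|^2\le c_1^2\alpha\|z\|^2$, which is \eqref{3.16}.

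For part (2), I would repeat the same index-splitting, but now with $x=K^*Kz$ so that $\langle x,v_n\rangle = \sigma_n^2\langle z,v_n\rangle$. Again the coefficient error is dominated by $|\langle x, v_n\rangle|$ in both cases, and now $|\langle x, v_n\rangle|^2 = \sigma_n^4|\langle z,v_n\rangle|^2$. Applying $\sigma_n^2\le c_2\alpha$ twice gives $\sigma_n^4\le c_2^2\alpha^2$, hence the $n$th squared error is at most $c_2^2\alpha^2|\langle z,v_n\rangle|^2$; summing yields \eqref{equ3.17}.

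The main obstacle is not analytic but combinatorial: one must verify that the ``active'' case (where the naive bound $\alpha/(2\sigma_n^2)$ blows up as $\sigma_n\to 0$) is controlled precisely because the defining inequality of that case forces $|\langle x, v_n\rangle|$ itself to be at least $\alpha/(2\sigma_n^2)$. Once this self-bound is observed, the rest is a direct application of the source condition and the assumed relation between $c_i$ and $\sigma_n^i/\alpha^{i/2}$.
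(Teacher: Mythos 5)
Your proposal is correct and takes essentially the same route as the paper's proof: the same split of $\mathbb{N}$ into the active set (per-coefficient error $\alpha/(2\sigma_n^2)$) and the thresholded set (error $|\langle x,v_n\rangle|$), followed by the same use of the source condition and the hypotheses $c_1\ge\sigma_n/\sqrt{\alpha}$, $c_2\ge\sigma_n^2/\alpha$. Your unifying self-bound on the active set, $\alpha/(2\sigma_n^2)\le|\langle x,v_n\rangle|$, is exactly the inequality the paper invokes there (written in terms of $z$ as $\alpha^{2}/(4\sigma_n^{4})\le\sigma_n^{2}|\langle z,u_n\rangle|^{2}$, where the paper's intermediate display in fact drops a square that your version states correctly), and you additionally carry out part (2) explicitly where the paper only remarks that it follows by similar methods.
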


\begin{proof}
    With $x=K^{*}z$ and $\left<x,v_{n}\right>=\sigma_{n}\left<z,u_{n}\right>$, When $\sigma_{n}^3\left<z,u_{n}\right> < -\frac{\alpha}{2}$ and $\sigma_{n}^3\left<z,u_{n}\right> > \frac{\alpha}{2}$,
    formula \eqref{equ3.13} and \eqref{equ3.14} takes the form
    \begin{equation*}
        \sum_{n\in\mathbb{N}_{1}}\frac{\alpha^{2}}{4\sigma_{n}^{4}}\leq \sum_{n\in\mathbb{N}_{1}}\sigma_{n}^{2}\left| \left<z,u_{n}\right>\right|\quad {\rm and} \quad \sum_{n\in\mathbb{N}_{2}}\frac{\alpha^{2}}{4\sigma_{n}^{4}}\leq \sum_{n\in\mathbb{N}_{2}}\sigma_{n}^{2}\left| \left<z,u_{n}\right>\right|.
    \end{equation*}
    When $|\sigma_{n}^3\left<z,u_{n}\right>|\leq \frac{\alpha}{2}$, formula \eqref{equ3.15} takes the form
    \begin{equation*}
        \sum_{n\in\mathbb{N}_{3}}|\left<x,v_{n}\right>|^2= \sum_{n\in\mathbb{N}_{3}}\sigma_{n}^{2}|\left<z,u_{n}\right>|^2.
    \end{equation*}
    \par In summary, we obtain
    \begin{align*}
        \|\mathcal{R}_{\alpha}Kx-x\|^2
        & =\sum_{n\in\mathbb{N}_{1}}\frac{\alpha^{2}}{4\sigma_{n}^{4}}+ \sum_{n\in\mathbb{N}_{2}}\frac{\alpha^{2}}{4\sigma_{n}^{4}}+ \sum_{n\in\mathbb{N}_{3}}|\left<x,v_{n}\right>|^2  \\
        & \leq \sum_{n\in\mathbb{N}_{1}\cup\mathbb{N}_{2}\cup\mathbb{N}_{3}} \sigma_{n}^{2}|\left<z,u_{n}\right>|^2 \\
        & = \sum_{n\in\mathbb{N}} \sigma_{n}^{2}|\left<z,u_{n}\right>|^2 \leq (c_{1}\sqrt{\alpha}\|z\|)^{2}.
    \end{align*}
    The first case is proved and the second case can be proved by similar methods.
\end{proof}

\begin{theorem}\label{the3.9}
    Let $y^{\delta}\in Y$ satisfy $\|y^{\delta}-y\|\leq\delta$, where $y=Kx$ denotes the exact data and $0<\alpha < 1$. Let $K:X\rightarrow Y$ be a compact operator with a singular system $\left\{\sigma_{n},v_{n},u_{n}\right\}_{n\in\mathbb{N}}$. The operator
    \begin{equation*}
        \mathcal{R}_{\alpha}y^{\delta}:=\sum_{n\in\mathbb{N}}\frac{1}{\sigma_{n}^2}\mathcal{S}_{\alpha}(\sigma_{n}\left<y^{\delta}, u_{n}\right>)v_{n}\ ,\ y^{\delta}\in Y.
    \end{equation*}
    is a regularization operator, and $\left\|\mathcal{R}_{\alpha}\right\|\leq c\left(\alpha\right)$. Define $x^{\alpha,\delta}:=\mathcal{R}_{\alpha}y^{\delta}$ as an approximation of the solution $x$ of $Kx=y$. \\
    (1) Let $x=K^{*}z\in\mathcal{R}(K^{*})$ with $\|z\|\leq E$ and $c>0$. For the choice $\alpha(\delta)=c(\delta/E)^{4/3}$, we have the estimate
    \begin{equation}\label{equ3.18}
        \|x^{\alpha(\delta),\delta}-x\|\leq(\frac{d_{1}}{c}+\sqrt{d_{2}c})\delta^{1/3} E^{2/3}.
    \end{equation} \\
    (2) Let $x=K^{*}Kz\in\mathcal{R}(K^{*}K)$ with $\|z\|\leq E$ and $c>0$. The choice $\alpha(\delta)=c\delta/E$  the estimate
    \begin{equation}\label{equ3.19}
        \|x^{\alpha(\delta),\delta}-x\|\leq(\frac{d_{1}}{c}+d_{2}c)\delta^{1/2} E^{1/2}.
    \end{equation}
    Therefore, the approximate solution is optimal for the information $\|(K^{*})^{-1}x\|\leq E$ or $\|(K^{*}K))^{-1}\|\leq E$, respectively(if $K^{*}$ is one-to-one).
\end{theorem}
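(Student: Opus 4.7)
The overall plan is to follow the classical two-term decomposition that underpins most regularization-theoretic error bounds. By the triangle inequality,
\begin{equation*}
    \|x^{\alpha,\delta}-x\|\le\|\mathcal{R}_{\alpha}y^{\delta}-\mathcal{R}_{\alpha}y\|+\|\mathcal{R}_{\alpha}Kx-x\|,
\end{equation*}
using $y=Kx$. The first summand is the propagated data-noise error, and the second is the deterministic approximation (bias) error. The rest of the work is to sharpen each of these individually with the tools already in hand and then choose $\alpha=\alpha(\delta)$ to balance them.

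For the data-noise term, I would exploit the non-expansiveness of the soft thresholding operator (Lemma \ref{lem3.7}) inside the singular expansion defining $\mathcal{R}_{\alpha}$. Writing the difference $\mathcal{R}_{\alpha}y^{\delta}-\mathcal{R}_{\alpha}y$ componentwise in the $\{v_n\}$-basis and applying Lemma \ref{lem3.7} coordinatewise, one factor of $\sigma_n$ cancels against $\sigma_n^{-2}$ and one arrives at the same estimate structure as the norm bound derived in the proof of Theorem \ref{the3.6}. This yields
\begin{equation*}
    \|\mathcal{R}_{\alpha}y^{\delta}-\mathcal{R}_{\alpha}y\|\le c(\alpha)\|y^{\delta}-y\|\le c(\alpha)\,\delta.
\end{equation*}
From the hypothesis $c(\alpha)\sigma_n\ge 1$ restricted to the active indices (those with $|\sigma_n\langle y^{\delta},u_n\rangle|>\alpha/2$), one reads off a quantitative dependence $c(\alpha)\sim d_{1}/\alpha^{\tau}$ with the exponent $\tau$ matched to each parameter choice rule.

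For the approximation term, I would invoke Theorem \ref{the3.8} directly: in case (1), the source condition $x=K^{*}z$ with $\|z\|\le E$ gives, via \eqref{3.16}, $\|\mathcal{R}_{\alpha}Kx-x\|\le c_{1}\sqrt{\alpha}\,E$; in case (2), the stronger source condition $x=K^{*}Kz$ gives, via \eqref{equ3.17}, $\|\mathcal{R}_{\alpha}Kx-x\|\le c_{2}\,\alpha\,E$. Writing $d_{2}=c_{1}^{2}$ (resp.\ $d_{2}=c_{2}$), this provides approximation rates $O(\sqrt{\alpha})$ and $O(\alpha)$ respectively.

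Combining the bounds produces $\|x^{\alpha,\delta}-x\|\le d_{1}\delta/\alpha^{\tau}+\sqrt{d_{2}}\,\alpha^{1/2}E$ in case (1), and an analogous expression with $\alpha$ in place of $\alpha^{1/2}$ in case (2). Substituting $\alpha(\delta)=c(\delta/E)^{4/3}$ and $\alpha(\delta)=c\delta/E$ respectively, then collecting powers of $\delta$ and $E$, yields the asserted rates $\delta^{1/3}E^{2/3}$ and $\delta^{1/2}E^{1/2}$, with the constants $d_{1}/c$ and $\sqrt{d_{2}c}$ (resp.\ $d_{1}/c$ and $d_{2}c$) coming directly from the scaling constant $c$ inside the parameter rule. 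The main obstacle I anticipate is justifying the precise power $\tau$ of $\alpha$ inside $c(\alpha)$: unlike in the linear Tikhonov filter, $\mathcal{S}_{\alpha}$ is nonlinear and the hypothesis $c(\alpha)\sigma_n\ge 1$ only binds on the set of active indices $\{n:|\sigma_n\langle y^{\delta},u_n\rangle|>\alpha/2\}$, so producing a bound that is consistent with the stated parameter choice requires exactly the kind of careful index bookkeeping already present in the proof of Theorem \ref{the3.6}, together with interpolation between the active and inactive regimes. Once that calibration is made, the optimality claim follows because the two error terms are of the same order in $\delta$ at the chosen $\alpha(\delta)$, matching the classical order-optimality conditions for the source sets $\mathcal{R}(K^{*})$ and $\mathcal{R}(K^{*}K)$.
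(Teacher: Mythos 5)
Your skeleton coincides with the paper's: the same triangle-inequality split $\|x^{\alpha,\delta}-x\|\le\|\mathcal{R}_{\alpha}y^{\delta}-\mathcal{R}_{\alpha}y\|+\|\mathcal{R}_{\alpha}Kx-x\|$, the nonexpansiveness of $\mathcal{S}_{\alpha}$ (Lemma \ref{lem3.7}) for the noise term (the paper grinds this out in a four-case analysis of active/inactive indices, which your direct coordinatewise use of Lemma \ref{lem3.7} would actually streamline), and Theorem \ref{the3.8} for the bias term. So far this is essentially the paper's route.

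However, the step you defer as the ``main obstacle'' is exactly where your argument breaks, and it cannot be repaired in the form you promise. The paper calibrates the stability constant by the \emph{only} route the hypothesis $c(\alpha)\sigma_{n}\ge 1$ on active indices $\{n:\,|\sigma_{n}\langle y^{\delta},u_{n}\rangle|>\alpha/2\}$ supports: since $\sigma_{n}\cdot 2|\langle y^{\delta},u_{n}\rangle|/\alpha>1$ there, it sets $d_{1}:=\max\{2|\langle y^{\delta},u_{n}\rangle|,2|\langle y,u_{n}\rangle|\}$ and $c(\alpha)=d_{1}/\alpha$, i.e.\ your exponent is forced to $\tau=1$, not left free. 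With $\tau=1$ the total bound is $d_{1}\delta/\alpha+\sqrt{d_{2}\alpha}\,E$ (case 1) and $d_{1}\delta/\alpha+d_{2}\alpha E$ (case 2), and your proposed substitution of the parameter rules as stated in the theorem then fails outright: $\alpha=c(\delta/E)^{4/3}$ gives a noise term
\begin{equation*}
\frac{d_{1}\delta}{\alpha}=\frac{d_{1}}{c}\,\delta^{-1/3}E^{4/3},
\end{equation*}
which diverges as $\delta\to 0$, while $\alpha=c\delta/E$ gives $d_{1}\delta/\alpha=(d_{1}/c)E$, which does not vanish at all. No alternative exponent $\tau$ rescues this: matching the noise term to $\delta^{1/3}E^{2/3}$ under $\alpha=c(\delta/E)^{4/3}$ forces $\tau=1/2$, which the hypothesis does not justify and which in any case yields the constant $d_{1}/\sqrt{c}$ rather than the stated $d_{1}/c$, with the bias term then of the non-matching order $\delta^{2/3}E^{1/3}$. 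What the paper's proof actually substitutes is $\alpha(\delta)=c(\delta/E)^{2/3}$ in case (1) and $\alpha(\delta)=c(\delta/E)^{1/2}$ in case (2); these are the balancing choices that produce precisely \eqref{equ3.18} and \eqref{equ3.19} with constants $d_{1}/c+\sqrt{d_{2}c}$ and $d_{1}/c+d_{2}c$. In other words, the exponents printed in the theorem statement are typos, and by taking them at face value and promising a ``calibration of $\tau$'' to fit them, you committed to proving something the estimates cannot deliver. A correct write-up must fix $c(\alpha)=d_{1}/\alpha$ explicitly and derive the parameter rules from balancing $d_{1}\delta/\alpha$ against $\sqrt{d_{2}\alpha}\,E$ (resp.\ $d_{2}\alpha E$), rather than back-solving from the stated $\alpha(\delta)$.
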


\begin{proof}
    Let $c\left(\alpha\right)\sigma_{n}\geq 1$, where $\sigma_{n}$ satisfies $\left|\sigma_{n}\left<y^{\delta}, u_{n}\right>\right|>\frac{\alpha}{2}$ or $\left|\sigma_{n}\left<y, u_{n}\right>\right|>\frac{\alpha}{2}$. The error splits into two parts by the following obvious application of the triangle inequality:
    \begin{equation*}
        \|x^{\alpha,\delta}-x\|\leq\|\mathcal{R}_{\alpha}y^{\delta}-\mathcal{R}_{\alpha}y\| +\|\mathcal{R}_{\alpha}y-x\|.
    \end{equation*}
    We have
    \begin{equation*}
        \left\|\mathcal{R}_{\alpha}y^{\delta}-\mathcal{R}_{\alpha}y\right\|^{2}= \sum_{n\in\mathbb{N}}\frac{1}{\sigma_{n}^4}\left\|\mathcal{S}_{\alpha}(\sigma_{n}\left<y^{\delta},u_{n}\right>)v_{n} -\mathcal{S}_{\alpha}(\sigma_{n}\left<y,u_{n}\right>)v_{n}\right\|^{2}.
    \end{equation*}
    According to Lemma \ref{lem3.7}, we will examine the distinct scenarios concerning $y^{\delta}$ and $y$ in a systematic manner, addressing each case individually. \\
    Case 1, if $\left|\sigma_{n}\left<y^{\delta}, u_{n}\right>\right|>\frac{\alpha}{2}$ and $\left|\sigma_{n}\left<y, u_{n}\right>\right|>\frac{\alpha}{2}$, then
    \begin{align*}
        \sum\frac{1}{\sigma_{n}^4}\left\|\mathcal{S}_{\alpha}(\sigma_{n}\left<y^{\delta},u_{n}\right>)v_{n} -\mathcal{S}_{\alpha}(\sigma_{n}\left<y,u_{n}\right>)v_{n}\right\|^{2}
        & \leq \sum\frac{1}{\sigma_{n}^{2}}\left\|\left<y^{\delta}-y,u_{n}\right>v_{n} \right\|^{2} \nonumber\\
        & \leq c\left(\alpha\right)^{2}\sum\left|\left<y^{\delta}-y,u_{n}\right>\right|^{2} \\
        & \leq c\left(\alpha\right)^{2}\delta^{2}.
    \end{align*}
    Case 2, if $\left|\sigma_{n}\left<y^{\delta}, u_{n}\right>\right|\leq\frac{\alpha}{2}$ and $\left|\sigma_{n}\left<y, u_{n}\right>\right|\leq\frac{\alpha}{2}$, then
    \begin{equation*}
        \sum\frac{1}{\sigma_{n}^4}\left\|\mathcal{S}_{\alpha}(\sigma_{n}\left<y^{\delta},u_{n}\right>)v_{n} -\mathcal{S}_{\alpha}(\sigma_{n}\left<y,u_{n}\right>)v_{n}\right\|^{2}=0.
    \end{equation*}
    Case 3, if $\left|\sigma_{n}\left<y^{\delta}, u_{n}\right>\right|>\frac{\alpha}{2}$ and $\left|\sigma_{n}\left<y, u_{n}\right>\right|\leq\frac{\alpha}{2}$, then
    \begin{align*}
        \sum\frac{1}{\sigma_{n}^4}\left\|\mathcal{S}_{\alpha}(\sigma_{n}\left<y^{\delta},u_{n}\right>)v_{n} -\mathcal{S}_{\alpha}(\sigma_{n}\left<y,u_{n}\right>)v_{n}\right\|^{2}
        &  = \sum \frac{1}{\sigma_{n}^4}\left\|\left(\sigma_{n}\left|\left<y^{\delta},u_{n}\right>\right|- \frac{\alpha}{2}\right)v_{n}\right\|^{2} \\
        & \leq \sum \frac{1}{\sigma_{n}^2}\left\|\left(\left|\left<y^{\delta},u_{n}\right>\right|- \left|\left<y, u_{n}\right>\right|\right)v_{n}\right\|^{2} \\
        & \leq c\left(\alpha\right)^{2}\sum \left|\left<y^{\delta}-y,u_{n}\right>\right|^{2} \\
        & \leq c\left(\alpha\right)^{2}\delta^{2}
    \end{align*}
    Case 4, if $\left|\sigma_{n}\left<y^{\delta}, u_{n}\right>\right|\leq\frac{\alpha}{2}$ and $\left|\sigma_{n}\left<y, u_{n}\right>\right|>\frac{\alpha}{2}$, then
    \begin{align*}
        \sum\frac{1}{\sigma_{n}^4}\left\|\mathcal{S}_{\alpha}(\sigma_{n}\left<y^{\delta},u_{n}\right>)v_{n} -\mathcal{S}_{\alpha}(\sigma_{n}\left<y,u_{n}\right>)v_{n}\right\|^{2}
        &  = \sum \frac{1}{\sigma_{n}^4}\left\|\left(\sigma_{n}\left|\left<y,u_{n}\right>\right|- \frac{\alpha}{2}\right)v_{n}\right\|^{2} \\
        & \leq \sum \frac{1}{\sigma_{n}^2}\left\|\left(\left|\left<y,u_{n}\right>\right|- \left|\left<y^{\delta}, u_{n}\right>\right|\right)v_{n}\right\|^{2} \\
        & \leq c\left(\alpha\right)^{2} \sum \left|\left<y-y^{\delta},u_{n}\right>\right|^{2} \\
        & \leq c\left(\alpha\right)^{2}\delta^{2}
    \end{align*}
    Through the discussion above, we obtain
    \begin{equation}\label{equ3.20}
        \left\|\mathcal{R}_{\alpha}y^{\delta}-\mathcal{R}_{\alpha}y\right\|^{2}\leq c\left(\alpha\right)^{2}\delta^{2}
    \end{equation}
    From the condition 
    \begin{equation}\label{con1.1}
        \left|\sigma_{n}\left<y^{\delta}, u_{n}\right>\right|>\frac{\alpha}{2}\quad {\rm or}\quad \left|\sigma_{n}\left<y, u_{n}\right>\right|>\frac{\alpha}{2}
    \end{equation}
    with $0<\alpha<1$, we obtain
    \begin{equation*}
        \sigma_{n}\frac{2\left|\left<y^{\delta},u_{n}\right>\right|}{\alpha}> 1\quad {\rm or} \quad \sigma_{n}\frac{2\left|\left<y,u_{n}\right>\right|}{\alpha}> 1.
     \end{equation*}
    So we define $d_{1}:=\max\left\{2\left|\left<y^{\delta},u_{n}\right>\right|, 2\left|\left<y,u_{n}\right>\right|\right\}$ for $n$ satisfying the condition \ref{con1.1}. By formula \eqref{equ3.14}, \eqref{equ3.15} and Theorem \ref{the3.8}, when
    \begin{equation}\label{con1.2}
        \left|\sigma_{n}^{3}\left<z, u_{n}\right>\right|>\frac{\alpha}{2},
    \end{equation}
    we also obtain
    \begin{align*}
        \sum_{n\in\mathbb{N}_{1}\cup\mathbb{N}_{2}}\frac{\alpha^{2}}{4\sigma_{n}^{4}}
        \leq \left(c_{1}\sqrt{\alpha}\right)^{2}\left\|z\right\|^{2}.
    \end{align*} 
    So we define $d_{2}:=\max\left\{n/\left(2\sigma_{n}\sum_{i=1}^{n}\left|\left<z,u_{i}\right>\right|\right) \right\}$ for $n$ satisfying the condition \ref{con1.2}. 
    \par Furthermore, we choose $c(\alpha)=d_{1}/\alpha$, $c_{1}=\sqrt{d_{2}}$ and $c_{2}=d_{2}$.
    Combining \eqref{equ3.20} with Theorem \ref{the3.8} yields the error estimate
    \begin{equation*}
        \|x^{\alpha,\delta}-x\|\leq\frac{d_{1}\delta}{\alpha}+\sqrt{d_{2}\alpha}\|z\|
    \end{equation*}
    and
    \begin{equation*}
        \|x^{\alpha,\delta}-x\|\leq\frac{d_{1}\delta}{\alpha}+d_{2}\alpha\|z\|.
    \end{equation*}
    The choices $\alpha(\delta)=c\left(\delta/E\right)^{2/3}$ and $\alpha(\delta)=c\left(\delta/E\right)^{1/2}$ lead to the estimates \eqref{equ3.18} and \eqref{equ3.19}, respectively. This confirms the proof of the theorem.
\end{proof}

\begin{remark}
    Unlike the linear threshold operator, the nonlinear regularization operator filters out smaller singular values that do not meet the condition $\left|\sigma_{n}\left<y^{\delta}, u_{n}\right>\right|>\frac{\alpha}{2}$ through a thresholding process to achieve regularization. In this case, the corresponding regularization parameter $\alpha$ is implicit within the threshold condition. Therefore, we select $c\left(\alpha\right)$ based on this threshold criterion.
\end{remark}

\subsection{SVD method for $\ell^{1/2}$-SVD regularization operator}\label{sec3.2}

\par In this section, we explore the interplay between the SVD method and sparsity regularization with $\ell^{1/2}$ penalties, formally defined by the optimization problem
\begin{equation}\label{equ3.21}
    \min_{x\in\ell_{2}}\left\{\Phi_{\alpha}(x)=\left\|Kx-y^{\delta}\right\|_{Y}^{2}+ \alpha\sum_{n\in\mathbb{N}}\left|\left<x,v_{n}\right>\right|^{1/2}\right\},
\end{equation}
where $\alpha>0$ is the regularization parameter. In \cite{XCXZ12}, an iterative half-thresholding algorithm is proposed to rapidly solve the sparsity regularization with $\ell^{1/2}$ penalties. In scenarios where the operator $K$ is diagonal within a specific orthogonal basis, we present a direct formulation of the SVD calculation pertinent to the corresponding regularization solution, accompanied by a detailed proof. Under this condition, we adapt the SVD methodology to effectively demonstrate its capacity to minimize the functional incorporation of the non-smooth regularization term.

\par We begin by defining the half-thresholding function and identifying the stationary points of the functional $\Phi_{\alpha}\left(x\right)$ as described in \eqref{equ3.21}.

\begin{definition}\label{def3.10}
    For $t\in\mathbb{R}$, $\alpha>0$, and a singular system $\{(\sigma_{n},u_{n},v_{n})\}_{n\in\mathbb{N}}$, we define the function $\mathcal{H}_{\alpha,n}$ as follows:
    \begin{align}\label{equ3.22}
        \mathcal{H}_{\alpha,n}(t)=
        \left\{             
        \begin{array}{ll}
            \frac{2}{3\sigma_{n}^{4/3}}t\left(1+cos\left(\frac{2}{3}\pi-\frac{2}{3}\phi_{\alpha}(t)\right)\right), & |t| > \frac{3}{4}\alpha^{2/3}, \\
            0, & |t| \leq \frac{3}{4}\alpha^{2/3},
        \end{array}
        \right.
    \end{align}
    where
    \begin{equation*}
        \phi_{\alpha}(t)=arccos\left(\frac{\alpha}{8}\left(\frac{|t|}{3}\right)^{-3/2}\right).
    \end{equation*}
    Thus, $\mathcal{H}_{\alpha,n}$ is referred to as a half thresholding function from $\mathbb{R}$ to $\mathbb{R}$.
\end{definition}

\begin{definition}\label{def3.11}
    \textbf{(Stationary point)} A point $x$ is considered a stationary point of $\Phi_{\alpha}\left(x\right)$ as defined in \eqref{equ3.21}, if it satisfies the following first-order optimization condition
    \begin{equation}\label{equ3.23}
        2K^{*}\left(Kx-y^{\delta}\right)+\alpha\nabla\left(\sum_{n\in\mathbb{N}} \left|\left<x,v_{n}\right>\right|^{1/2}\right) =0.
    \end{equation}
\end{definition}

\begin{theorem}\label{the3.12}
    Let $K: X\rightarrow Y$ be a linear compact operator and $\{(\sigma_{n},u_{n},v_{n})\}_{n\in\mathbb{N}}$ be a singular system of $K$. If $K$ happens to be diagonal in the $v_{n}$-basis and a minimum solution $x_{\alpha}^{\delta}$ exists for $\Phi_{\alpha}\left(x\right)$ as defined in \eqref{equ3.10}, it can be expressed as
    \begin{equation}\label{equ3.24}
        x_{\alpha}^{\delta}=\sum_{n\in\mathbb{N}}\mathcal{H}_{\alpha,n}(\sigma_{n}^{1/3}\left<y^{\delta},u_{n}\right>)v_{n},
    \end{equation}
    where $\mathcal{H}_{\alpha,n}$ is the half thresholding function described in  Definition \ref{def3.10} and $y^{\delta}_{n}$ represents $\left<y^{\delta},u_{n}\right>$. Furthermore, if $x_{\alpha}^{\delta}$ can be represented in the form of \eqref{equ3.24}, then it qualifies as a stationary point of the functional $\Phi_{\alpha}\left(x\right)$ in \eqref{equ3.21}. Additionally, it is a locally optimal solution in $U\left(x_{\alpha}^{\delta},\rho\right)$ with $0<\rho<\sum_{n\notin\Lambda}\alpha^{2} /(4\sigma_{n}^{2} {y^{\delta}_{n}}^{2})$, provided that the following conditions are met
    \begin{equation*}
       \left\|x_{\alpha}^{\delta}\right\|_{\ell_{2}}>C\quad and\quad c(\alpha)\sigma_{n}\geq 1,
    \end{equation*}
    for $n\in\mathbb{N}$, where $\sigma_{n}$ satisfies $\left|\sigma_{n}^{1/3}\left<y^{\delta},u_{n}\right>\right|>\frac{3}{4}\alpha^{2/3}$, and $c(\alpha)=2\sqrt{2}C^{3/4}$.
\end{theorem}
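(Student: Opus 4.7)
The plan is to mirror the structure of Theorem \ref{the3.3}, with two key differences arising from the $\ell^{1/2}$ penalty: the one-dimensional subproblem is non-convex with closed form given by the half-thresholding rule of \cite{XCXZ12} rather than by soft-thresholding, and the global minimality conclusion must be weakened to local minimality. First I would invoke Lemma \ref{lem2.5} together with the diagonal-in-$v_n$ assumption on $K$ to rewrite
\begin{equation*}
\Phi_{\alpha}(x) = \sum_{n\in\mathbb{N}}\Bigl[|\sigma_n x_n - y^\delta_n|^2 + \alpha\,|x_n|^{1/2}\Bigr],
\end{equation*}
with $x_n=\langle x,v_n\rangle$ and $y^\delta_n=\langle y^\delta,u_n\rangle$. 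Since the sum decouples across coordinates, any minimizer must solve, for each $n$, the scalar problem $\min_{t\in\mathbb{R}}\{|\sigma_n t - y^\delta_n|^2+\alpha|t|^{1/2}\}$, whose closed-form solution (after the rescaling $y^\delta_n\mapsto\sigma_n^{1/3}y^\delta_n$ that accounts for the $\sigma_n$ inside the quadratic) is exactly $\mathcal{H}_{\alpha,n}(\sigma_n^{1/3}y^\delta_n)$ by Definition \ref{def3.10}, which gives \eqref{equ3.24}.

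For the converse, assuming $x_\alpha^\delta$ is given by \eqref{equ3.24}, I would verify the stationarity condition \eqref{equ3.23} coordinate by coordinate. For those $n$ with $|\sigma_n^{1/3}y^\delta_n| > \frac{3}{4}\alpha^{2/3}$, the explicit trigonometric form of $\mathcal{H}_{\alpha,n}$ is precisely designed so that the gradient $2\sigma_n(\sigma_n(x_\alpha^\delta)_n - y^\delta_n) + \frac{\alpha}{2}|(x_\alpha^\delta)_n|^{-1/2}\mathrm{sign}((x_\alpha^\delta)_n)$ vanishes; for $n$ with $|\sigma_n^{1/3}y^\delta_n| \leq \frac{3}{4}\alpha^{2/3}$, we have $(x_\alpha^\delta)_n = 0$ and the corresponding subdifferential inequality is built into the thresholding cutoff. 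This reduces to quoting the equivalence in \cite{XCXZ12} between half-thresholding fixed points and first-order stationary points of the $\ell^{1/2}$ problem.

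For the local optimality claim, I would expand
\begin{equation*}
\Phi_{\alpha}(x_\alpha^\delta+h) - \Phi_{\alpha}(x_\alpha^\delta) = \|Kh\|_Y^2 + 2\langle Kx_\alpha^\delta - y^\delta, Kh\rangle + \alpha\sum_{n\in\mathbb{N}}\bigl(|(x_\alpha^\delta)_n + h_n|^{1/2} - |(x_\alpha^\delta)_n|^{1/2}\bigr),
\end{equation*}
use stationarity to substitute for the cross term, and split the $\ell^{1/2}$ sum over $\Lambda:=\{n:(x_\alpha^\delta)_n\neq 0\}$ and its complement. For $n\notin\Lambda$, the threshold inequality $|\sigma_n^{1/3}y^\delta_n|\leq\frac{3}{4}\alpha^{2/3}$ together with the radius restriction $\rho<\sum_{n\notin\Lambda}\alpha^{2}/(4\sigma_n^{2}(y^\delta_n)^{2})$ produces a non-negative contribution, in direct parallel with the $n\notin\Lambda$ estimate in Theorem \ref{the3.3}. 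For $n\in\Lambda$, the norm lower bound $\|x_\alpha^\delta\|_{\ell_2}>C$ combined with the calibration $c(\alpha)\sigma_n\geq 1$ and $c(\alpha)=2\sqrt{2}\,C^{3/4}$ supplies the quadratic lower bound needed to dominate the concave increment of $t\mapsto|t|^{1/2}$.

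The main obstacle is precisely the $n\in\Lambda$ analysis. Unlike the convex $\ell^1$ case of Theorem \ref{the3.3}, where the inequality $|(x_\alpha^\delta)_n+h_n| - |(x_\alpha^\delta)_n| - \mathrm{sign}((x_\alpha^\delta)_n)h_n \geq 0$ is automatic by convexity, the map $t\mapsto|t|^{1/2}$ is concave away from the origin, so the corresponding increment after subtracting the first-order term can be negative. Controlling it requires a second-order Taylor expansion of $|\cdot|^{1/2}$ about $(x_\alpha^\delta)_n$, whose curvature scales like $|(x_\alpha^\delta)_n|^{-3/2}$ and is therefore bounded uniformly only under the lower bound $\|x_\alpha^\delta\|_{\ell_2}>C$; reconciling this curvature with the quadratic term $\|Kh\|_Y^2$, via the specific choice $c(\alpha)=2\sqrt{2}\,C^{3/4}$ and the prescribed radius $\rho$, is the delicate calibration on which the whole argument hinges.
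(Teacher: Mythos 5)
Your proposal follows essentially the same route as the paper's own proof: decoupling $\Phi_{\alpha}$ coordinate-wise via the diagonality of $K$, identifying the scalar minimizer with the half-thresholding rule (the paper re-derives this by reducing the first-order condition \eqref{equ3.25} to the cubic $\eta_n^3-\sigma_n^{1/3}y_n^{\delta}\eta_n+\alpha/4=0$ and applying Cartan's trigonometric root formula, where you instead quote \cite{XCXZ12} directly, a cosmetic difference), and then proving local optimality by the same expansion split over $\Lambda$ and its complement, with the radius bound $\rho<\sum_{n\notin\Lambda}\alpha^{2}/(4\sigma_n^{2}(y_n^{\delta})^{2})$ handling $n\notin\Lambda$ and the curvature bound $-h_n^2/\bigl(8|(x_{\alpha}^{\delta})_n|^{3/2}\bigr)$ dominated by $\sigma_n^2h_n^2$ via exactly the calibration $c(\alpha)=2\sqrt{2}C^{3/4}$, i.e.\ $c(\alpha)^2=8C^{3/2}$, handling $n\in\Lambda$. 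You correctly identified the concavity of $t\mapsto|t|^{1/2}$ on $\Lambda$ as the crux, and the mechanism you sketch for it is precisely the one the paper executes.
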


\begin{proof}
    We note that $\sum_{n\in\mathbb{N}}\left|\left<x,v_{n}\right>\right|^{1/2}$ is continuously differentiable for all $x\in X$ with $x\neq 0$. Its gradient is given by
    \begin{equation*}
        \nabla\left(\sum_{n\in\mathbb{N}}\left|\left<x,v_{n}\right>\right|^{1/2}\right)= \left(\frac{{\rm sign} (x_{1})}{2\sqrt{|x_{1}|}}, \frac{{\rm sign} (x_{2})}{2\sqrt{|x_{2}|}}, \cdots, \frac{{\rm sign} (x_{n})}{2\sqrt{|x_{n}|}}\right)^{T}.
    \end{equation*}
    It is evident that \eqref{equ3.21} is differentiable for $x_{n}\neq 0$. By applying Lemma \ref{lem2.5} and the operator $K$ is diagonal in the $v_{n}$-basis, $\Phi_{\alpha}\left(x\right)$ in \eqref{equ3.21} can be reformulated as
    \begin{align*}
        \Phi_{\alpha}(x)
        & =\|Kx-y^{\delta}\|_{Y}^{2}+\alpha\sum_{n\in\mathbb{N}} \sqrt{|\left<x,v_{n}\right>|} \nonumber\\
        & =\sum_{n\in\mathbb{N}}\left[|\sigma_{n}x_{n}-y^{\delta}_{n}|^{2} +\alpha\sqrt{|x_{n}|}\right],
    \end{align*}
     where $x_{n}$ denotes $\left<x,v_{n}\right>$ and $y^{\delta}_{n}$ signifies $\left<y^{\delta},u_{n}\right>$ for a singular system $\{(\sigma_{n},v_{n},u_{n})\}_{n\in\mathbb{N}}$ of $K$. If $x_{\alpha}^{\delta}$ is a minimum solution of $\Phi_{\alpha}\left(x\right)$ in \eqref{equ3.21}, it must satisfy the first order optimal condition, leading to the variational equation
    \begin{equation}\label{equ3.25}
        2\sigma_{n}(\sigma_{n}\left(x_{\alpha}^{\delta}\right)_{n}-y^{\delta}_{n})+\frac{\alpha {\rm sign} \left(\left(x_{\alpha}^{\delta}\right)_{n}\right)} {2\sqrt{|\left(x_{\alpha}^{\delta}\right)_{n}|}}=0.
    \end{equation}
    
    \par For $\left(x_{\alpha}^{\delta}\right)_{n}\neq 0$ and $\sigma_{n}>0$, we obtain 
    \begin{equation*}
        y^{\delta}_{n}=\sigma_{n} \left(x_{\alpha}^{\delta}\right)_{n}+ \frac{\alpha {\rm sign}\left(\left(x_{\alpha}^{\delta}\right)_{n}\right)} {4\sigma_{n}\sqrt{\left|\left(x_{\alpha}^{\delta}\right)_{n}\right|}}.
    \end{equation*} 
     It is clear that $\left(x_{\alpha}^{\delta}\right)_{n}y^{\delta}_{n}\geq 0$, thus we only need to consider the case where $\left(x_{\alpha}^{\delta}\right)_{n}y^{\delta}_{n}\geq 0$ for $n\in\mathbb{N}$.  
    \par For the case where $\sigma_{n}^{1/3}y^{\delta}_{n}> \frac{3}{4}\alpha^{2/3}$, we define $\eta_{n}:=\sigma_{n}^{2/3}\sqrt{|\left(x_{\alpha}^{\delta}\right)_{n}|}$, and we find that $\sigma_{n}^{4/3}\left(x_{\alpha}^{\delta}\right)_{n}=\eta_{n}^{2}$. Consequently, equation \eqref{equ3.25} can be rewritten as the following cubic equation
    \begin{equation}\label{equ3.26}
        \eta_{n}^{3}-\sigma_{n}^{1/3}y^{\delta}_{n}\eta_{n}+\frac{\alpha}{4}=0.
    \end{equation}
    Using Cartan's root-finding formula, which is expressed in terms of hyperbolic functions (as detailed in \cite{XCXZ12}), we can express the solutions to \eqref{equ3.26}. We denote  
    \begin{equation*}
        r:=\sqrt{\left(\left|\sigma_{n}^{1/3}y^{\delta}_{n}\right|/3\right)},\quad  p:=-\frac{\sigma_{n}^{1/3}y^{\delta}_{n}}{3}\quad {\rm and}\quad q:=\frac{\alpha}{8}.
    \end{equation*}
    Given that $\sigma_{n}^{1/3}y^{\delta}_{n}>(3/4)\alpha^{2/3}$ and setting $\phi=arccos(q/r^{3})$, the three roots of \eqref{equ3.26} can be expressed as follows
    \begin{align*}
        \left\{             
             \begin{array}{ll}
             \eta_{n,1}=-2rcos(\frac{\phi}{3}), \\
             \eta_{n,2}=2rcos(\frac{\pi}{3}+\frac{\phi}{3}), \\
             \eta_{n,3}=2rcos(\frac{\pi}{3}-\frac{\phi}{3}).
             \end{array}
        \right.
    \end{align*}
    Since $\sigma_{n}^{2/3}\sqrt{|\left(x_{\alpha}^{\delta}\right)_{n}|}=\eta_{n}>0$, both $\eta_{n,2}$ and $\eta_{n,3}$ are solutions to \eqref{equ3.26}. Furthermore, we can verify that $\eta_{n,3}>\eta_{n,2}$, establishing $\eta_{n,3}$ as the unique solution of \eqref{equ3.25}, which is given by
    \begin{equation*}
        \left(x_{\alpha}^{\delta}\right)_{n}=\frac{2}{3}\frac{y^{\delta}_{n}}{\sigma_{n}}\left(1+cos\left(\frac{2}{3}\pi-\frac{2}{3}arccos\left(\frac{\alpha}{8}\left(\frac{\sigma_{n}^{1/3}y^{\delta}_{n}}{3}\right)^{-3/2}\right)\right)\right).
    \end{equation*}
    
    \par For the scenario where $\sigma_{n}^{1/3}y^{\delta}_{n}< -\frac{3}{4}\alpha^{2/3}$, we define $\sigma_{n}^{2/3}\sqrt{|\left(x_{\alpha}^{\delta}\right)_{n}|}=\eta_{n}$, leading to $\sigma_{n}^{4/3}\left(x_{\alpha}^{\delta}\right)_{n}=-\eta_{n}^{2}$. Thus, equation \eqref{equ3.25} can be transformed into the cubic equation
    \begin{equation}\label{equ3.27}
        \eta_{n}^{3}+\sigma_{n}^{1/3}y^{\delta}_{n}\eta_{n}+\frac{\alpha}{4}=0.
    \end{equation}
    Following a similar analysis as before, we conclude that \eqref{equ3.25} has a unique solution given by
    \begin{equation*}
        \left(x_{\alpha}^{\delta}\right)_{n}=-\frac{2}{3}\left|\frac{y^{\delta}_{n}}{\sigma_{n}}\right|\left(1+cos\left(\frac{2}{3}\pi-\frac{2}{3}arccos\left(\frac{\alpha}{8}\left(\frac{\sigma_{n}^{1/3}y^{\delta}_{n}}{3}\right)^{-3/2}\right)\right)\right).
    \end{equation*}
    
    \par For the case where $\left|\sigma_{n}^{1/3}y^{\delta}_{n}\right|\leq \frac{3}{4}\alpha^{2/3}$, we define $\left(x_{\alpha}^{\delta}\right)_{n}=0$. In summary, we demonstrate that
    \begin{align*}
        \left(x_{\alpha}^{\delta}\right)_{n}=
        \left\{             
             \begin{array}{ll}
             \frac{2}{3}\frac{y^{\delta}_{n}}{\sigma_{n}}\left(1+cos\left(\frac{2}{3}\pi-\frac{2}{3}arccos\left(\frac{\alpha}{8}\left(\frac{\sigma_{n}^{1/3}y^{\delta}_{n}}{3}\right)^{-3/2}\right)\right)\right), & \sigma_{n}^{1/3}y^{\delta}_{n}> \frac{3}{4}\alpha^{2/3}, \\[10pt]
             0, & \left|\sigma_{n}^{1/3}y^{\delta}_{n}\right|\leq \frac{3}{4}\alpha^{2/3} , \\[6pt]
             -\frac{2}{3}\left|\frac{y^{\delta}_{n}}{\sigma_{n}}\right|\left(1+cos\left(\frac{2}{3}\pi-\frac{2}{3}arccos\left(\frac{\alpha}{8}\left(\frac{\sigma_{n}^{1/3}y^{\delta}_{n}}{3}\right)^{-3/2}\right)\right)\right), & \sigma_{n}^{1/3}y^{\delta}_{n}< -\frac{3}{4}\alpha^{2/3}.
             \end{array}
        \right.
    \end{align*}
    Thus, we conclude that
    \begin{equation*}
         x_{\alpha}^{\delta}=\sum_{n\in\mathbb{N}}\mathcal{H}_{\alpha,n} \left(\sigma_{n}^{1/3}y^{\delta}_{n}\right)v_{n},
    \end{equation*}
    where $\mathcal{H}_{\alpha,n}$ is defined in Definition \ref{def3.10}.
    
    \par The remainder of this section will  focus on proving that the solution of the SVD algorithm for the sparsity regularization with $\ell^{1/2}$ penalties is a stationary point of the function $\Phi_{\alpha}\left(x\right)$ as defined in \eqref{equ3.21}. Let us define the set $\Lambda=\{n\in\mathbb{N} \mid \left(x_{\alpha}^{\delta}\right)_{n}\neq 0\}$. For $x_{\alpha}^{\delta}+h\in U\left(x_{\alpha}^{\delta},\rho\right)$ with $0<\rho<\sum_{n\notin\Lambda}\alpha^{2} /(4\sigma_{n}^{2} {y^{\delta}_{n}}^{2})$, we can derive the following
    \begin{align}\label{equ3.28}
        \Phi_{\alpha}\left(x_{\alpha}^{\delta}+h\right)
        & =\left\|K\left(x_{\alpha}^{\delta}+h\right)-y^{\delta}\right\|_{Y}^{2}+ \alpha\sum_{n\in\mathbb{N}}\sqrt{\left|\left(x_{\alpha}^{\delta}\right)_{n} +h_{n}\right|} \nonumber\\
        & =\left\|Kx_{\alpha}^{\delta}-y^{\delta}\right\|_{Y}^{2}+ 2\left<Kx_{\alpha}^{\delta}-y^{\delta},Kh\right>+\left\|Kh\right\|_{Y}^{2}+ \alpha\sum_{n\in\mathbb{N}}\sqrt{\left|\left(x_{\alpha}^{\delta}\right)_{n}+ h_{n}\right|} \nonumber\\
        & =\Phi_{\alpha}\left(x_{\alpha}^{\delta}\right)+2\left<Kx_{\alpha}^{\delta}-y^{\delta},Kh\right> +\left\|Kh\right\|_{Y}^{2}+\alpha\sum_{n\in\mathbb{N}}\sqrt{\left|\left(x_{\alpha}^{\delta}\right)_{n} +h_{n}\right|}-\alpha\sum_{n\in\mathbb{N}}\sqrt{\left|\left(x_{\alpha}^{\delta}\right)_{n}\right|}.
    \end{align}
    Since $x_{\alpha}^{\delta}$ can be expressed as shown in \eqref{equ3.24}, it fulfills the first-order optimal condition \eqref{equ3.25}. Therefore, we have
    \begin{equation*}
        2\left<Kx_{\alpha}^{\delta}-y^{\delta},Kh\right>= \sum_{n\in\mathbb{N}}2\sigma_{n}\left(\sigma_{n}\left(x_{\alpha}^{\delta}\right)_{n} -y^{\delta}_{n}\right) h_{n} =\sum_{n\in\mathbb{N}}-\frac{\alpha {\rm sign} (\left(x_{\alpha}^{\delta}\right)_{n})h_{n}} {2\sqrt{|\left(x_{\alpha}^{\delta}\right)_{n}|}}.
    \end{equation*}
    Next, let us define $\Lambda:=\left\{n\in\mathbb{N} \mid \left(x_{\alpha}^{\delta}\right)_{n}\neq 0\right\}$ and rewrite \eqref{equ3.28} as
    \begin{align*}
        \Phi_{\alpha}\left(x_{\alpha}^{\delta}+h\right)-\Phi_{\alpha}\left(x_{\alpha}^{\delta}\right)
        & =2\sum_{n\in\mathbb{N}}\sigma_{n}\left(\sigma_{n}\left(x_{\alpha}^{\delta}\right)_{n} -y^{\delta}_{n}\right)h_{n}+\left\|Kh\right\|^{2} \nonumber\\
        & \quad +\alpha\sum_{n\in\mathbb{N}} \sqrt{\left|\left(x_{\alpha}^{\delta}\right)_{n}+h_{n}\right|} -\alpha\sum_{n\in\mathbb{N}}\sqrt{\left|\left(x_{\alpha}^{\delta}\right)_{n}\right|} \nonumber\\
        & =\left\|Kh\right\|^{2}+\sum_{n\notin\Lambda}\left[\alpha\sqrt{h_{n}}-2\sigma_{n}y_{n}h_{n}\right] \nonumber\\
        & \quad+\sum_{n\in\Lambda}\alpha\left[\sqrt{\left|\left(x_{\alpha}^{\delta}\right)_{n}+h_{n}\right|} -\sqrt{\left|\left(x_{\alpha}^{\delta}\right)_{n}\right|}-\frac{{\rm sign}\left(\left(x_{\alpha}^{\delta}\right)_{n}\right)h_{n} }{2\sqrt{\left|\left(x_{\alpha}^{\delta}\right)_{n} \right|}}\right].
    \end{align*}
    For $n\notin\Lambda$, since $|h_{n}|\leq|\rho|\leq\alpha^{2}/(4\sigma_{n}^{2} {y^{\delta}_{n}}^{2})$, we find
    \begin{equation*}
        \alpha\sqrt{h_{n}}-2\sigma_{n}y_{n}h_{n}\geq 0.
    \end{equation*}
    For $n\in\Lambda$, we explore two scenarios. Case 1, when $\left|\left(x_{\alpha}^{\delta}\right)_{n}+h_{n}\right|\leq \left|\left(x_{\alpha}^{\delta}\right)_{n}\right|$, we analyze 
    \begin{align*}
        & \quad \sqrt{\left|\left(x_{\alpha}^{\delta}\right)_{n}+h_{n}\right|}- \sqrt{\left|\left(x_{\alpha}^{\delta}\right)_{n}\right|}- \frac{h_{n}{\rm sign} \left(\left(x_{\alpha}^{\delta}\right)_{n}\right)} {2\sqrt{\left|\left(x_{\alpha}^{\delta}\right)_{n}\right|}} \nonumber\\
        & =\frac{1}{\sqrt{\left|\left(x_{\alpha}^{\delta}\right)_{n}+h_{n}\right|}} \left[\left|\left(x_{\alpha}^{\delta}\right)_{n}+h_{n}\right|- \sqrt{\left|\left(x_{\alpha}^{\delta}\right)_{n}\right|} \sqrt{\left|\left(x_{\alpha}^{\delta}\right)_{n}+h_{n}\right|}\right]\nonumber\\
        & \quad -\frac{1}{\sqrt{\left|\left(x_{\alpha}^{\delta}\right)_{n}+h_{n}\right|}} \left[\frac{\sqrt{\left|\left(x_{\alpha}^{\delta}\right)_{n}+h_{n}\right|}} {2\sqrt{\left|\left(x_{\alpha}^{\delta}\right)_{n}\right|}} h_{n} {\rm sign} \left(\left(x_{\alpha}^{\delta}\right)_{n}\right)\right] \nonumber\\
        & \geq \frac{1}{\sqrt{\left|\left(x_{\alpha}^{\delta}\right)_{n}+h_{n}\right|}} \left[\left|\left(x_{\alpha}^{\delta}\right)_{n}+h_{n}\right|- \frac{1}{2}\left|\left(x_{\alpha}^{\delta}\right)_{n}\right|- \frac{1}{2}\left|\left(x_{\alpha}^{\delta}\right)_{n}+h_{n}\right|- \frac{1}{2}h_{n}{\rm sign} \left(x_{\alpha}^{\delta}\right)_{n}\right].
    \end{align*}
    If $\left(x_{\alpha}^{\delta}\right)_{n}>0$, we observe
    \begin{equation*}
        \frac{1}{2}\left|\left(x_{\alpha}^{\delta}\right)_{n}+h_{n}\right|- \frac{1}{2}\left(\left(x_{\alpha}^{\delta}\right)_{n}+h_{n}\right)\geq 0.
    \end{equation*}
    If $\left(x_{\alpha}^{\delta}\right)_{n}<0$, we see
    \begin{equation*}
        \frac{1}{2}\left|\left(x_{\alpha}^{\delta}\right)_{n}+h_{n}\right|+ \frac{1}{2}\left(\left(x_{\alpha}^{\delta}\right)_{n}+h_{n}\right)\geq 0.
    \end{equation*}
    Thus, we conclude that
    \begin{equation*}
        \sqrt{\left|\left(x_{\alpha}^{\delta}\right)_{n}+h_{n}\right|}- \sqrt{\left|\left(x_{\alpha}^{\delta}\right)_{n}\right|}- \frac{{\rm sign} \left(\left(x_{\alpha}^{\delta}\right)_{n}\right)} {2\sqrt{\left|\left(x_{\alpha}^{\delta}\right)_{n}\right|}}\geq 0.
    \end{equation*}
    In this case, we have
    \begin{equation*}
        \Phi_{\alpha}\left(x_{\alpha}^{\delta}+h\right)- \Phi_{\alpha}\left(x_{\alpha}^{\delta}\right)\geq \left\|Kh\right\|^{2}.
    \end{equation*}
    Case 2, when $\left|\left(x_{\alpha}^{\delta}\right)_{n}+h_{n}\right| > \left|\left(x_{\alpha}^{\delta}\right)_{n}\right|$, we analyze
    \begin{align*}
        \Phi_{\alpha}\left(x_{\alpha}^{\delta}+h\right)-\Phi_{\alpha}\left(x_{\alpha}^{\delta}\right)
        & =\sum_{n\notin\Lambda}\left[\sigma_{n}^{2}h_{n}^{2}+\alpha\sqrt{h_{n}}-2\sigma_{n}y_{n}h_{n}\right] \\
        & \quad +\sum_{n\in\Lambda}\left[\sigma_{n}^{2}h_{n}^{2}+ \alpha\sqrt{\left|\left(x_{\alpha}^{\delta}\right)_{n}+h_{n}\right|}- \alpha\sqrt{\left|\left(x_{\alpha}^{\delta}\right)_{n}\right|}- \frac{\alpha h_{n}{\rm sign}\left(\left(x_{\alpha}^{\delta}\right)_{n}\right)} {2\sqrt{\left|\left(x_{\alpha}^{\delta}\right)_{n}\right|}}\right].
    \end{align*}
    The latter part of the inequality can be simplified to
    \begin{align*}
        & \quad \sqrt{\left|\left(x_{\alpha}^{\delta}\right)_{n}+h_{n}\right|}- \sqrt{\left|\left(x_{\alpha}^{\delta}\right)_{n}\right|}- \frac{h_{n}{\rm sign} \left(\left(x_{\alpha}^{\delta}\right)_{n}\right)} {2\sqrt{\left|\left(x_{\alpha}^{\delta}\right)_{n}\right|}} \\
        & =\frac{1}{\sqrt{\left|\left(x_{\alpha}^{\delta}\right)_{n}+h_{n}\right|}+ \sqrt{\left|\left(x_{\alpha}^{\delta}\right)_{n}\right|}} \left[\left|\left(x_{\alpha}^{\delta}\right)_{n}+h_{n}\right|- \left|\left(x_{\alpha}^{\delta}\right)_{n}\right|-h_{n}{\rm sign} \left(\left(x_{\alpha}^{\delta}\right)_{n}\right)\right] \\
        & \quad -\frac{h_{n}{\rm sign} \left(\left(x_{\alpha}^{\delta}\right)_{n}\right) \left[\sqrt{\left|\left(x_{\alpha}^{\delta}\right)_{n}+h_{n}\right|}- \sqrt{\left|\left(x_{\alpha}^{\delta}\right)_{n}\right|}\right]} {2\sqrt{\left|\left(x_{\alpha}^{\delta}\right)_{n}\right|} \left[\sqrt{\left|\left(x_{\alpha}^{\delta}\right)_{n}+h_{n}\right|}+ \sqrt{\left|\left(x_{\alpha}^{\delta}\right)_{n}\right|}\right]} \\
        & \geq  -\frac{h_{n} {\rm sign} \left(\left(x_{\alpha}^{\delta}\right)_{n}\right) \left[\left|\left(x_{\alpha}^{\delta}\right)_{n}+h_{n}\right|- \left|\left(x_{\alpha}^{\delta}\right)_{n}\right|\right]} {2\sqrt{\left|\left(x_{\alpha}^{\delta}\right)_{n}\right|} \left[\sqrt{\left|\left(x_{\alpha}^{\delta}\right)_{n}+h_{n}\right|}+ \sqrt{\left|\left(x_{\alpha}^{\delta}\right)_{n}\right|}\right]^{2}} \nonumber\\
        & \geq -\frac{h_{n}^{2}}{8\left|\left(x_{\alpha}^{\delta}\right)_{n}\right|^{3/2}}.
    \end{align*}
    Combine the above inequality gives us
    \begin{align*}
        \Phi_{\alpha}\left(x_{\alpha}^{\delta}+h\right)-\Phi_{\alpha}\left(x_{\alpha}^{\delta}\right)
        & \geq\sum_{n\notin\Lambda}\left[\sigma_{n}^{2}h_{n}^{2}+\frac{\alpha}{2}\sqrt{h_{n}} -2\sigma_{n}y_{n}h_{n}\right] +\sum_{n\in\Lambda}\left[\sigma_{n}^{2}h_{n}^{2} -\frac{h_{n}^{2}}{8\left|\left(x_{\alpha}^{\delta}\right)_{n}\right|^{3/2}}\right].
    \end{align*}
    Given $\left|\left<x_{\alpha}^{\delta},v_{n}\right>\right|^{2}= \left\|x_{\alpha}^{\delta}\right\|_{\ell_{2}}^{2}>C^{2}$ and $c(\alpha)\sigma_{n}>1$,
    \begin{equation*}
        \sigma_{n}^{2}h_{n}^{2}-\frac{h_{n}^{2}}{8\left|\left(x_{\alpha}^{\delta}\right)_{n}\right| ^{3/2}} \geq \sigma_{n}^{2}h_{n}^{2}-\frac{h_{n}^{2}}{8|C|^{3/2}} = \sigma_{n}^{2}h_{n}^{2}-\frac{h_{n}^{2}}{c(\alpha)^{2}}\geq 0,
    \end{equation*}
    where $c(\alpha)=2\sqrt{2}C^{3/4}$. In this case, we find 
    \begin{equation*}
        \Phi_{\alpha}\left(x_{\alpha}^{\delta}+h\right)-\Phi_{\alpha}\left(x_{\alpha}^{\delta}\right)\geq\sum_{n\notin\Lambda}\sigma_{n}^{2}h_{n}^{2}.
    \end{equation*}
    \par In summary, $x_{\alpha}^{\delta}$ serves as a stationary point of the functional $\Phi_{\alpha}\left(x\right)$ as defined in \eqref{equ3.21}, and it represents a local minimum solution within $U(x^{*},\rho)$ for $0<\rho<\sum_{n\notin\Lambda}\alpha^{2} /(4\sigma_{n}^{2} {y^{\delta}_{n}}^{2})$.
\end{proof}

\par Although Theorem \ref{the3.12} specifies that the operator $K$ is diagonal under a specific orthogonal bases, it suggests a new SVD method for a more general $K$. As $\alpha\rightarrow 0$, the nonlinear half thresholding function $\mathcal{H}_{\alpha}$ in \eqref{equ3.24} degenerates into an identity operator and \eqref{equ3.24} degenerates to \eqref{equ2.4}. Therefore, we further prove \eqref{equ3.24} is a regularization strategy, whether the operator $K$ is diagonal or not.

\begin{theorem}\label{the3.13}
    Let $K: X\rightarrow Y$ be a linear compact operator with a singular system denoted by $\left\{\left(\sigma_{n},v_{n},u_{n}\right)\right\}_{n\in\mathbb{N}}$. For every $\alpha>0$ and $0<\sigma_{n}\leq\|K\|$, suppose there exists $c(\alpha)$ such that
    \begin{equation*}
        c(\alpha)\sigma_{n}\geq \frac{4}{3},
    \end{equation*} 
    where $\sigma_{n}$ satisfies $\left|\sigma_{n}^{1/3}\left<y^{\delta},u_{n}\right>\right|>\frac{3}{4}\alpha^{2/3}$. Then the operator $\mathcal{R}_{\alpha}:Y\rightarrow \ell_{2}$, $\alpha>0$, defined by
    \begin{equation*}
        \mathcal{R}_{\alpha}y^{\delta}:=\sum_{n\in\mathbb{N}}\mathcal{H}_{\alpha,n}(\sigma_{n}^{1/3}\left<y^{\delta}, u_{n}\right>)v_{n},
        \ y^{\delta}\in Y
    \end{equation*}
     where $\mathcal{H}_{\alpha,n}$ is the half thresholding function described in  Definition \ref{def3.10}, constitutes a regularization strategy with $\|\mathcal{R}_{\alpha}\|\leq c(\alpha)$.
\end{theorem}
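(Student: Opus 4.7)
The plan is to mirror the two-part structure of the proof of Theorem~\ref{the3.6}: first establish the operator norm bound $\|\mathcal{R}_\alpha\|\leq c(\alpha)$, and then verify the pointwise convergence $\mathcal{R}_\alpha Kx\to x$ as $\alpha\to 0$ for every $x\in X$.

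For the boundedness step, the starting point is the observation that $1+\cos(\cdot)\in[0,2]$. Combined with the explicit formula \eqref{equ3.22}, this yields the uniform estimate
\begin{equation*}
|\mathcal{H}_{\alpha,n}(t)|\leq \frac{4|t|}{3\sigma_n^{4/3}} \quad \text{when } |t|>\tfrac{3}{4}\alpha^{2/3},
\end{equation*}
and $\mathcal{H}_{\alpha,n}(t)=0$ otherwise. Substituting $t=\sigma_n^{1/3}\langle y^\delta,u_n\rangle$ gives $|\mathcal{H}_{\alpha,n}(\sigma_n^{1/3}\langle y^\delta,u_n\rangle)|\leq \frac{4}{3\sigma_n}|\langle y^\delta,u_n\rangle|$ on the active indices. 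Partitioning $\mathbb{N}$ into the set of active indices and the complement as in Theorem~\ref{the3.6}, invoking the hypothesis $c(\alpha)\sigma_n\geq 4/3$ to bound $\frac{16}{9\sigma_n^2}\leq c(\alpha)^2$ on the active indices, and applying Parseval's identity, I obtain $\|\mathcal{R}_\alpha y^\delta\|^2\leq c(\alpha)^2\|y^\delta\|^2$.

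For the convergence step, I use Lemma~\ref{lem2.6} to rewrite $\sigma_n^{1/3}\langle Kx,u_n\rangle=\sigma_n^{4/3}\langle x,v_n\rangle$, so that
\begin{equation*}
\|\mathcal{R}_\alpha Kx-x\|^2=\sum_{n\in\mathbb{N}}\left|\mathcal{H}_{\alpha,n}\bigl(\sigma_n^{4/3}\langle x,v_n\rangle\bigr)-\langle x,v_n\rangle\right|^2.
\end{equation*}
Given $\epsilon>0$, choose $N$ so that $\sum_{n>N}|\langle x,v_n\rangle|^2<\epsilon^2/4$ and split the sum into a head ($n\leq N$) and a tail ($n>N$). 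For the head, I further split each term into two subcases. If $|\sigma_n^{4/3}\langle x,v_n\rangle|\leq \frac{3}{4}\alpha^{2/3}$, then $\mathcal{H}_{\alpha,n}$ returns zero and the error equals $|\langle x,v_n\rangle|^2\leq \frac{9\alpha^{4/3}}{16\sigma_n^{8/3}}$, so summing over $n\leq N$ gives at most $N\cdot \frac{9\alpha^{4/3}}{16\sigma_N^{8/3}}$, which tends to $0$ with $\alpha$ since $\sigma_N$ is fixed. Otherwise, the argument of the arccosine in $\phi_\alpha$ tends to $0$, hence $\phi_\alpha\to \pi/2$, which forces $1+\cos(\frac{2\pi}{3}-\frac{2}{3}\phi_\alpha)\to 3/2$ and therefore $\mathcal{H}_{\alpha,n}(\sigma_n^{4/3}\langle x,v_n\rangle)\to \langle x,v_n\rangle$ pointwise. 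Since the head is a finite sum, both contributions are less than $\epsilon^2/4$ for $\alpha$ sufficiently small.

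The main obstacle will be obtaining a uniform tail bound with a constant independent of $\alpha$, needed to control $\sum_{n>N}|\cdots|^2$. The cheap inequality $|\mathcal{H}_{\alpha,n}(t)|\leq \frac{4|t|}{3\sigma_n^{4/3}}$ alone is not enough because it does not directly compare $\mathcal{H}_{\alpha,n}(\sigma_n^{4/3}\langle x,v_n\rangle)$ with $\langle x,v_n\rangle$. To circumvent this, I will treat the active and inactive tails separately: on the inactive tail the error equals $|\langle x,v_n\rangle|^2$; on the active tail, plugging $t=\sigma_n^{4/3}\langle x,v_n\rangle$ into the bound yields $|\mathcal{H}_{\alpha,n}(\sigma_n^{4/3}\langle x,v_n\rangle)|\leq \frac{4}{3}|\langle x,v_n\rangle|$, hence the error is bounded by $\frac{7}{3}|\langle x,v_n\rangle|$. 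Combining both cases gives the desired control, and together with the head estimate this yields $\|\mathcal{R}_\alpha Kx-x\|^2\leq \epsilon^2$ for $\alpha$ sufficiently small, completing the regularization property.
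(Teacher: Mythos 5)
Your proof is correct and shares the overall architecture of the paper's proof: the boundedness part is essentially identical (the estimate $|\mathcal{H}_{\alpha,n}(t)|\leq \frac{4|t|}{3\sigma_{n}^{4/3}}$ coming from $1+\cos(\cdot)\in[0,2]$, the partition into active indices $\left|\sigma_{n}^{1/3}\langle y^{\delta},u_{n}\rangle\right|>\frac{3}{4}\alpha^{2/3}$ and their complement, the hypothesis $c(\alpha)\sigma_{n}\geq\frac{4}{3}$, and Bessel/Parseval), and the convergence part uses the same rewriting $\sigma_{n}^{1/3}\langle Kx,u_{n}\rangle=\sigma_{n}^{4/3}\langle x,v_{n}\rangle$ via Lemma \ref{lem2.6} together with a head/tail splitting at some $N$. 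Where you genuinely differ from, and in fact improve on, the paper is the treatment of the active set: in its final chain of inequalities the paper replaces the entire $\mathbb{N}_{1}$-contribution \eqref{equ3.31} by $0$, justified only by the pointwise limit $\lim_{\alpha\to 0}\cos\left(\frac{2}{3}\pi-\frac{2}{3}\phi_{\alpha}\left(\sigma_{n}^{4/3}\langle x,v_{n}\rangle\right)\right)=\frac{1}{2}$; since this convergence is not uniform in $n$ (for indices near the threshold $\left|\sigma_{n}^{4/3}\langle x,v_{n}\rangle\right|\approx\frac{3}{4}\alpha^{2/3}$ the multiplier $\frac{2}{3}\left(1+\cos(\cdot)\right)$ can be as small as $\frac{1}{3}$), the paper never actually controls the infinitely many active tail terms. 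Your uniform bound $\left|\mathcal{H}_{\alpha,n}\left(\sigma_{n}^{4/3}\langle x,v_{n}\rangle\right)\right|\leq\frac{4}{3}\left|\langle x,v_{n}\rangle\right|$, giving a per-term error at most $\frac{7}{3}\left|\langle x,v_{n}\rangle\right|$ on the active tail, is exactly the missing ingredient; indeed one even has $\mathcal{H}_{\alpha,n}\left(\sigma_{n}^{4/3}\langle x,v_{n}\rangle\right)=\mu_{n}\langle x,v_{n}\rangle$ with $\mu_{n}=\frac{2}{3}\left(1+\cos(\cdot)\right)\in\left[\frac{1}{3},1\right]$, so the sharper factor $\frac{2}{3}$ is available. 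Two minor bookkeeping points: with your choice $\sum_{n>N}\left|\langle x,v_{n}\rangle\right|^{2}<\frac{\epsilon^{2}}{4}$ the active tail contributes up to $\frac{49}{9}\cdot\frac{\epsilon^{2}}{4}>\epsilon^{2}$, so you should shrink the tail threshold (e.g.\ to $\frac{9}{98}\epsilon^{2}$) or use the factor $\frac{2}{3}$; and your inactive head bound $\left|\langle x,v_{n}\rangle\right|^{2}\leq\frac{9\alpha^{4/3}}{16\sigma_{n}^{8/3}}$ is the correct squared version of the paper's estimate, which as written in \eqref{equ3.32} drops the square. Neither point affects the validity of your argument.
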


\begin{proof}
    We begin by establishing the boundedness of the operator $\mathcal{R}_{\alpha}$. We define the sets $\mathbb{N}_{1}=\left\{n\in\mathbb{N}\mid\left|\sigma_{n}^{1/3}\left<y^{\delta},u_{n}\right>\right| > \frac{3}{4}\alpha^{2/3}\right\}$ and $\mathbb{N}_{2}=\left\{n\in\mathbb{N}\mid \left|\sigma_{n}^{1/3}\left<y^{\delta},u_{n}\right>\right| \leq \frac{3}{4}\alpha^{2/3}\right\}$. In this case, it is evident that $\mathbb{N}=\mathbb{N}_{1}\cup\mathbb{N}_{2}$. We have  
    \begin{equation*}
        \|\mathcal{R}_{\alpha}y^{\delta}\|^2=\sum_{n\in\mathbb{N}_{1}\cup\mathbb{N}_{2}} \left[\mathcal{H}_{\alpha,n}\left(\sigma_{n}^{1/3}\left<y^{\delta}, u_{n}\right>\right)\right]^2.
    \end{equation*}
    For $n\in\mathbb{N}_{1}$, we have
    \begin{align}\label{equ3.29}
        \sum_{n\in\mathbb{N}_{1}}\left[\mathcal{H}_{\alpha,n}\left(\sigma_{n}^{1/3}\left<y^{\delta}, u_{n}\right>\right)\right]^2
        & =\sum_{n\in\mathbb{N}_{1}}\left[\frac{2 \left<y^{\delta}, u_{n}\right>}{3\sigma_{n}}\left(1+cos\left(\frac{2}{3}\pi-\frac{2}{3}\phi_{\alpha}\left( \sigma_{n}^{1/3}\left<y^{\delta}, u_{n}\right>\right)\right)\right)\right]^2 \nonumber\\
        & \leq \frac{16}{9}\sum_{n\in\mathbb{N}_{1}} \frac{1}{\sigma_{n}^{2}}|\left<y^{\delta},u_{n}\right>|^2.
    \end{align}
    For $n\in\mathbb{N}_{2}$, we find
    \begin{align}\label{equ3.30}
        \sum_{n\in\mathbb{N}_{2}}\left[\mathcal{H}_{\alpha,n}\left(\sigma_{n}^{1/3}\left<y^{\delta}, u_{n}\right>\right)\right]^2
        & =0.
    \end{align}
    By combining \eqref{equ3.29} and \eqref{equ3.30}, we derive
    \begin{align*}
        \|\mathcal{R}_{\alpha}y^{\delta}\|^{2}
        & = \sum_{n\in\mathbb{N}_{1}\cup\mathbb{N}_{2}}\left[\mathcal{H}_{\alpha,n}\left(\sigma_{n}^{1/3}\left<y^{\delta}, u_{n}\right>\right)v_{n}\right]^2 \\ 
        & =  \sum_{n\in\mathbb{N}_{1}}\left[\mathcal{H}_{\alpha,n}\left(\sigma_{n}^{1/3}\left<y^{\delta}, u_{n}\right>\right)v_{n}\right]^2+\sum_{n\in\mathbb{N}_{2}}\left[\mathcal{H}_{\alpha,n}\left(\sigma_{n}^{1/3}\left<y^{\delta}, u_{n}\right>\right)v_{n}\right]^2 \\
        & \leq \frac{16}{9}\sum_{n\in\mathbb{N}_{1}} \frac{1}{\sigma_{n}^{2}}|\left<y^{\delta},u_{n}\right>|^2+0 \\
        & \leq c(\alpha)^{2}\|y^{\delta}\|^{2}.
    \end{align*}
    Thus, we conclude that $\|\mathcal{R}_{\alpha}\|\leq c(\alpha)$.
    
    \par Next, we aim to demonstrate the convergence of $\mathcal{R}_{\alpha}Kx$ to $x$ as $\alpha\rightarrow 0$. By invoking Lemma \ref{lem2.6}, we have $\sigma_{n}^{1/3}\left<y,u_{n}\right>=\sigma_{n}^{4/3}\left<x,v_{n}\right>$.
     For $n\in\mathbb{N}_{1}$, where $\left|\sigma_{n}^{4/3}\left<x,v_{n}\right>\right| >\frac{3}{4}\alpha^{2/3}$, we have
    \begin{align}\label{equ3.31}
        & \quad \sum_{n\in\mathbb{N}_{1}}\left\|\mathcal{H}_{\alpha,n}\left(\sigma_{n}^{4/3}\left<x, v_{n}\right>\right)v_{n}-\left<x,v_{n}\right>v_{n}\right\|^2 \nonumber\\
        & = \sum_{n\in\mathbb{N}_{1}}\left\|\frac{2 \sigma_{n} \left<x,v_{n}\right>}{3\sigma_{n}}\left(1+cos\left(\frac{2}{3}\pi-\frac{2}{3}\phi_{\alpha}\left( \sigma_{n}^{4/3}\left<x,v_{n}\right>\right)\right)\right)v_{n} -\left<x,v_{n}\right>v_{n}\right\|^{2} \nonumber\\
        & = \sum_{n\in\mathbb{N}_{1}}\left\|\left(\frac{2}{3}cos\left(\frac{2}{3}\pi-\frac{2}{3}\phi_{\alpha}\left( \sigma_{n}^{4/3}\left<x,v_{n}\right>\right)\right)-\frac{1}{3}\right)\left<x, v_{n}\right>v_{n}\right\|^{2}
    \end{align}
    For $n\in\mathbb{N}_{2}$, where $\left|\sigma_{n}^{4/3}\left<x,v_{n}\right>\right| \leq\frac{3}{4}\alpha^{2/3}$, we find
    \begin{align}\label{equ3.32}
        \sum_{n\in\mathbb{N}_{2}}\left\|\mathcal{H}_{\alpha,n}\left(\sigma_{n}^{4/3}\left<x,v_{n}\right>\right)v_{n}-\left<x,v_{n}\right>v_{n}\right\|^2
        & = \sum_{n\in\mathbb{N}_{2}}\left\|\left<x,v_{n}\right>v_{n}\right\|^2 \nonumber\\
        & \leq \sum_{\{n\in\mathbb{N}_{2}\}\cap\{n\leq N\}}\frac{3 \alpha^{2/3}}{4\sigma_{n}^{4/3}}+\sum_{\{n\in\mathbb{N}_{2}\}\cap \{n> N\}}|\left<x,v_{n}\right>|^{2}.
    \end{align}
     \par For every $\epsilon>0$, there exist an $N\in\mathbb{N}$ such that for any fixed $x\in X$,
    \begin{equation*}
        \sum_{n=N+1}^{\infty}|\left<x,v_{n}\right>|^{2}\leq \frac{\epsilon^{2}}{2}.
    \end{equation*}
    As $\alpha\rightarrow 0$, for all $i=1,\dots,N$, we derive the following results
    \begin{equation*}
        |\alpha-0|^{2}\leq \frac{8\sigma_{i}^{4}}{27N}\epsilon^{6} \leq\frac{8\left\|K\right\|^{4}}{27N}\epsilon^{6}
    \end{equation*}
    and
    \begin{equation*}
        \lim_{\alpha\rightarrow 0}cos\left(\frac{2}{3}\pi-\frac{2}{3}\phi_{\alpha}\left( \sigma_{n}^{4/3}\left<x,v_{n}\right>\right)\right)=\frac{1}{2}.
    \end{equation*}
    By combining equations \eqref{equ3.31} and \eqref{equ3.32}, we obtain the following result
    \begin{align*}
        \|\mathcal{R}_{\alpha}Kx-x\|^2
        & = \sum_{n\in\mathbb{N}_{1}\cup\mathbb{N}_{2}} \left\|\mathcal{H}_{\alpha,n}\left(\sigma_{n}^{4/3}\left<x,v_{n}\right>\right)v_{n}-\left<x,v_{n}\right>v_{n}\right\|^2 \nonumber\\
        & \leq 0+\sum_{\{n\in\mathbb{N}_{2}\}\cap\{n\leq N\}}\frac{3\alpha^{2/3}}{4\sigma^{4/3}}+\sum_{\{n\in\mathbb{N}_{2}\}\cap \{n> N\}}|\left<x,v_{n}\right>|^{2} \\
        & \leq \sum_{n\leq N}\frac{3}{4\sigma^{4/3}}\left(\frac{8\sigma_{i}^{4}}{27N}\epsilon^{6}\right)^{1/3}+\sum_{n> N}|\left<x,v_{n}\right>|^{2} \nonumber\\
        & \leq \frac{\epsilon^{2}}{2}+\frac{\epsilon^{2}}{2}= \epsilon^{2}.
    \end{align*}
    Thus, since $\{\sigma_{n}\}_{n\in\mathbb{N}}$ represents the ordered sequence of the positive singular values of $K$, we have demonstrated that for every $x\in X$,
    \begin{equation*}
        \mathcal{R}_{\alpha}Kx\rightarrow x\ (\alpha\rightarrow 0).
    \end{equation*}
    This establishes the proof of convergence.
\end{proof}

\par The SVD method that utilizes half thresholding function is presented in Algorithm \ref{alg2}.

\begin{algorithm}
 \caption{SVD algorithm for the $\ell^{1/2}$-SVD regularization strategy}
\begin{algorithmic}\label{alg2}
\STATE{\textbf{Input:} Given regularization parameter $\alpha>0$, 
observation data $y^{\delta}$ and linear operator $K$.}
\STATE{\qquad $[U^{T},\Sigma,V]$ = SVD($K$)}
\STATE{\qquad Let $U^{T}=[u_{1},u_{2},\cdots,u_{m}]$, $V^{T}=[v_{1},v_{2},\cdots,v_{n}]$, and $\sigma_{i}=\Sigma(i,i)$.}
\STATE{\qquad For $i=1\cdots \min\{m,n\}$ :}
\STATE{\qquad\qquad $x_{i}^{\delta}=\mathcal{H}_{\alpha,i}\left(\sigma_{i}^{1/3}\left<y^{\delta}, u_{i}\right>\right)v_{i}$}
\STATE{\qquad end}
\STATE{\qquad $x_{\alpha}^{\delta}=\sum_{i=1}^{\min\{m,n\}}(x_{i}^{\delta})$}
\STATE{\textbf{Output:} The regularized solution $x_{\alpha}^{\delta}$.}
\end{algorithmic}
\end{algorithm}

\section{Numerical experiments}\label{sec4}

In this section, we present results from two numerical examples to illustrate the effectiveness of the proposed algorithm. We provide comparisons among the SVD algorithms, ISTA as described in \cite{DFL08}, FISTA from \cite{BT09}, and the PG algorithm outlined in \cite{DH20}. The relative error is employed to assess the performance of the reconstruction $x_{\alpha}^{\delta}$:
\begin{equation*}
    {\rm Rerror} = \frac{\|x_{\alpha}^{\delta}-x^{true}\|_{\ell_{2}}}{\|x^{true}\|_{\ell_{2}}},
\end{equation*}
where $x_{\alpha}^{\delta}$ is defined in Definition \ref{def2.1} and $x^{true}$ represents the exact solution. Additionally, we analyze the running time of the algorithms and the probability of successful recovery. The first example addresses a well-conditioned compressive sensing problem, while the second example pertains to an ill-conditioned image deblurring issue. All numerical experiments were conducted using MATLAB R2024a on a workstation equipped with an AMD Ryzen 5 4500U processor with Radeon Graphics operating at 2.38 GHz and 16GB of RAM.

\subsection{Compressive sensing}\label{sec4.1}

In the first example, we examine compressive sensing using the widely utilized random Gaussian matrix. The compressive sensing problem is defined as follows
\begin{equation*}
    K_{m \times n}x_{n}=y_{m},
\end{equation*}
where $K_{m \times n}$ represents a well-conditioned random Gaussian matrix generated by $K={\rm randn}(m,n)$. The exact data $y^{\dag}$ is obtained by $y^{\dag}=Kx^{\dag}$, with the exact solution $x^{\dag}$ being an $s$-sparse signal supported by a random index set. White Gaussian noise is added to the exact data $y^{\dag}$ by calling $y^{\delta}={\rm awgn}(Kx^{\dag},\sigma)$, where $\sigma$ (measured in dB) measures the ratio between the true (noise free) data $y^{\dag}$ and Gaussian noise. A larger value of $\sigma$ corresponds to a smaller noise level $\delta$, which is defined as $\delta=\|y^{\delta}-y^{\dag}\|_{\ell_{2}}$. For the compressive sensing problem, if $\|KK\|_{\ell_{2}} > 1$, it is necessary to scale the matrix $K$ to adapt to ISTA, FISTA, and PG iterative algorithms, that is, $K_{m\times n}\rightarrow cK_{m\times n}$ \cite{DDD04}. It is important to note that the condition number remains unchanged when the matrix is rescaled.

\begin{figure}[htbp]
    \centering
    \subfigure[True signal.]{\includegraphics[scale=0.26]{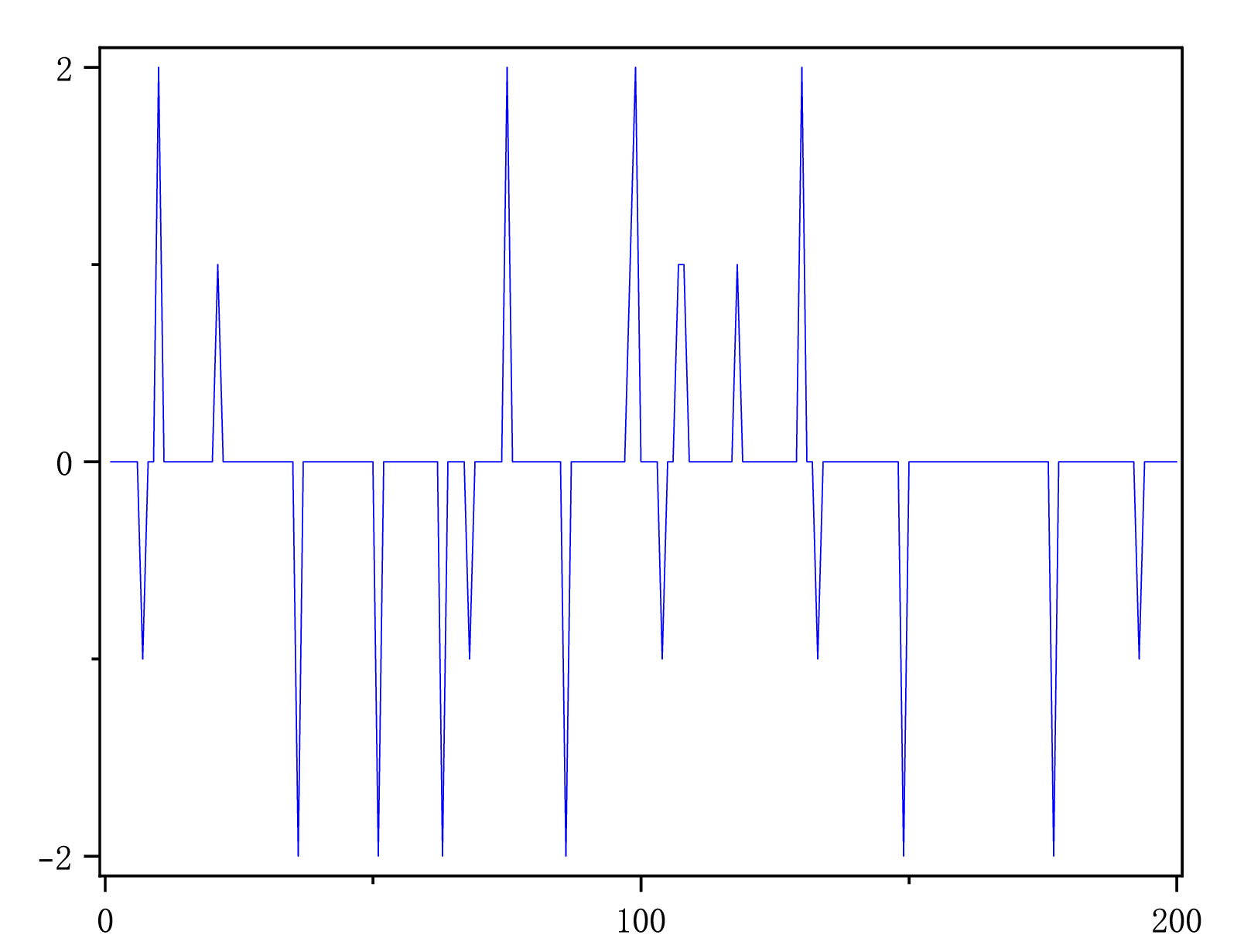}} 
    \subfigure[Observed data ($\sigma=80$dB).]{\includegraphics[scale=0.26]{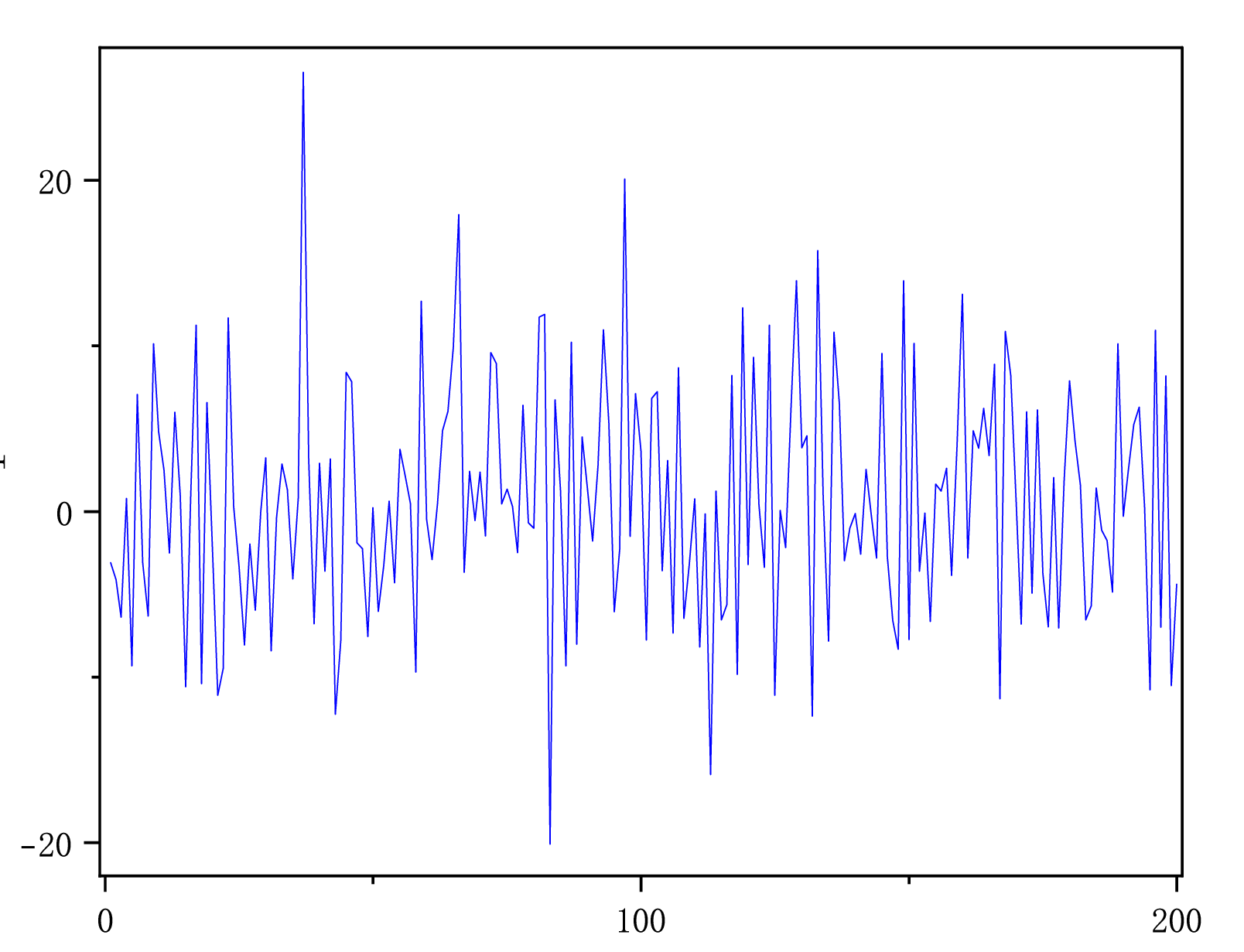}}
    \caption{(a) is the true signal and (b) is the observed data with $80$dB noise.}
    \label{figure:1}
\end{figure}

\begin{figure}[htbp]
    \centering
    \subfigure[True signal.]{\includegraphics[scale=0.26]{real_signal.eps}}
    \subfigure[Recovery signal by ISTA ({\rm Rerror}= 0.0078).]{\includegraphics[scale=0.26]{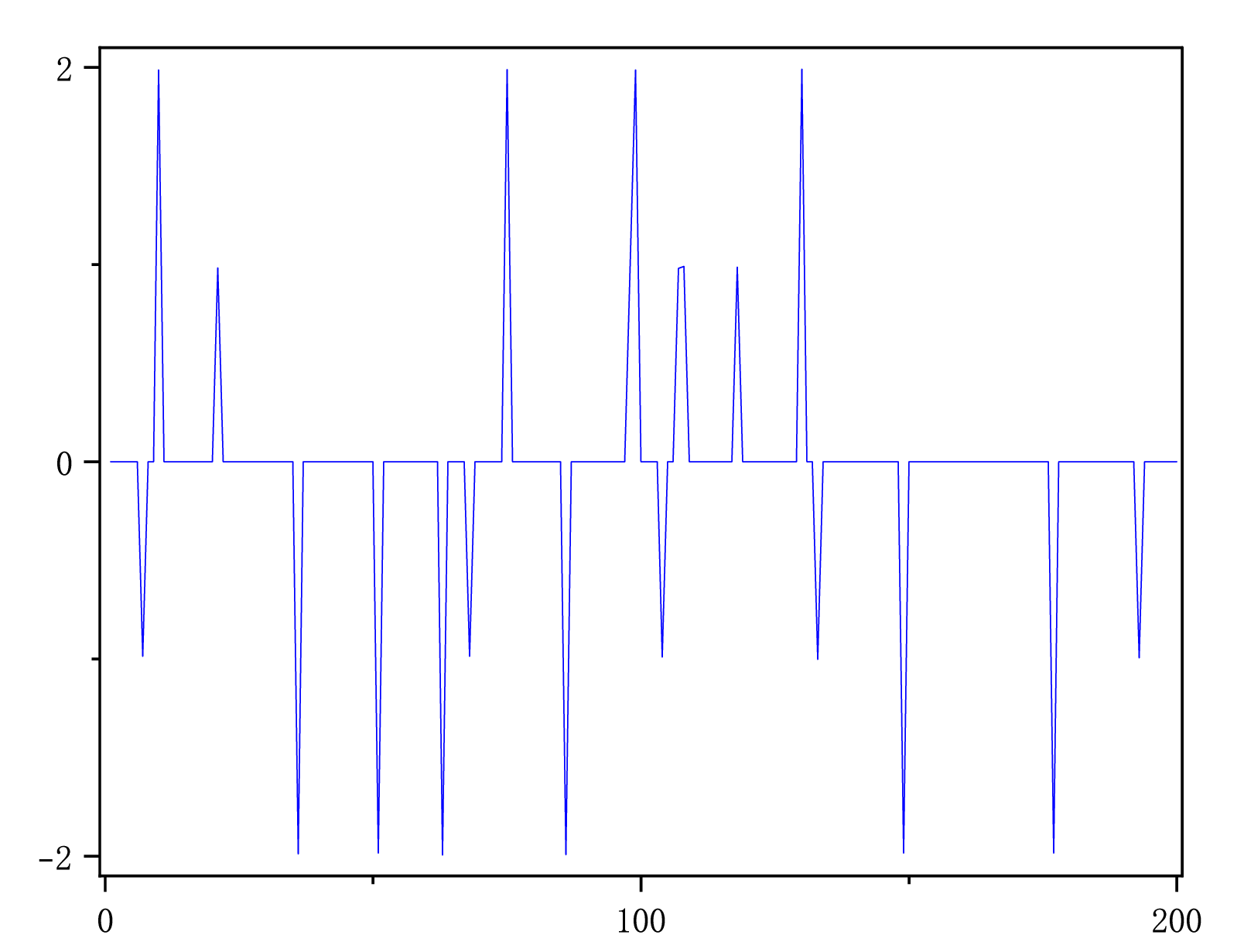}}\\
    \subfigure[Recovery signal by FISTA ({\rm Rerror}= 0.0064).]{\includegraphics[scale=0.26]{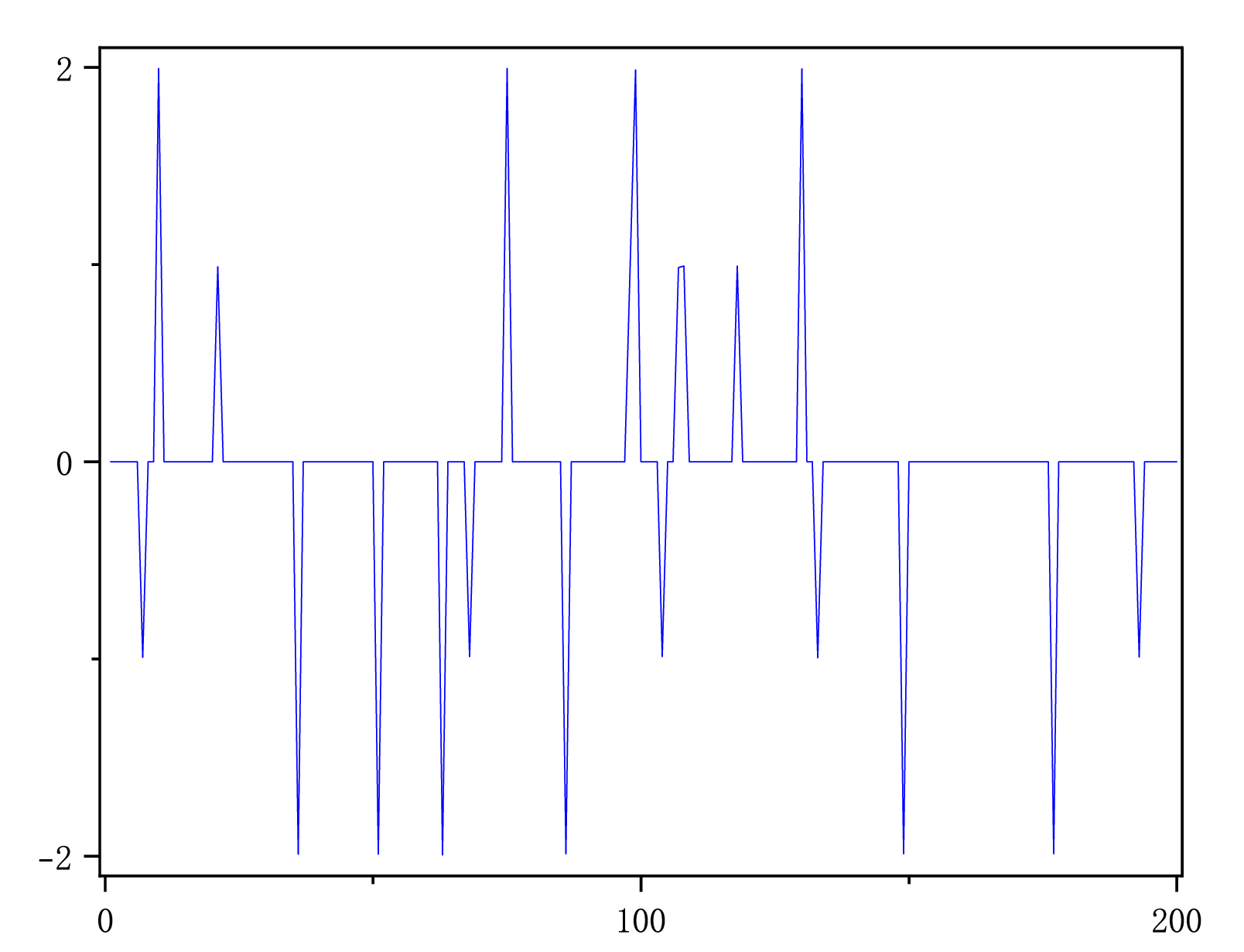}}
    \subfigure[Recovery signal by PG algorithm ({\rm Rerror}= 0.0029).]{\includegraphics[scale=0.26]{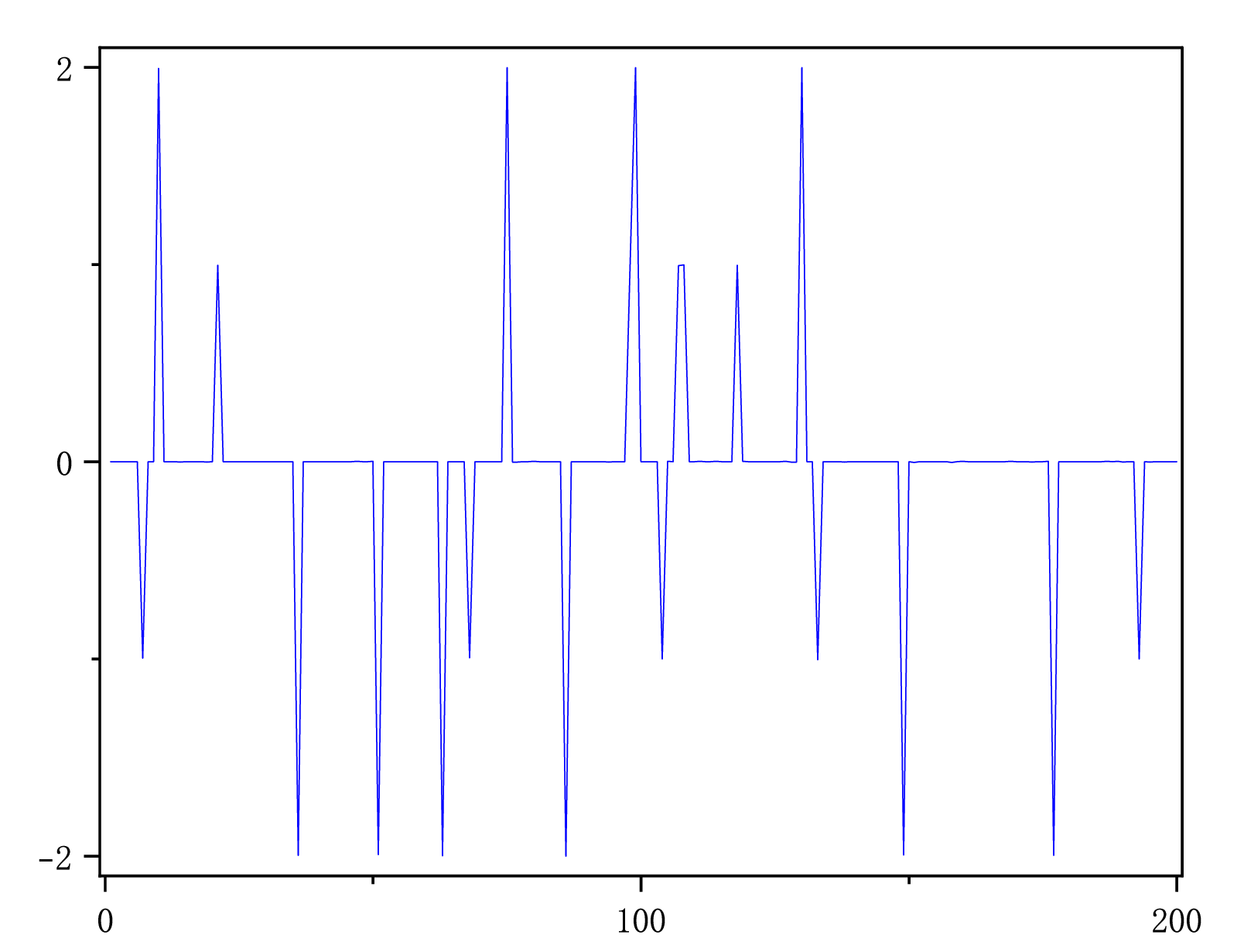}}\\
    \subfigure[Recovery signal by $\ell^{1}$-SVD algorithm ({\rm Rerror}= 0.0028).]{\includegraphics[scale=0.26]{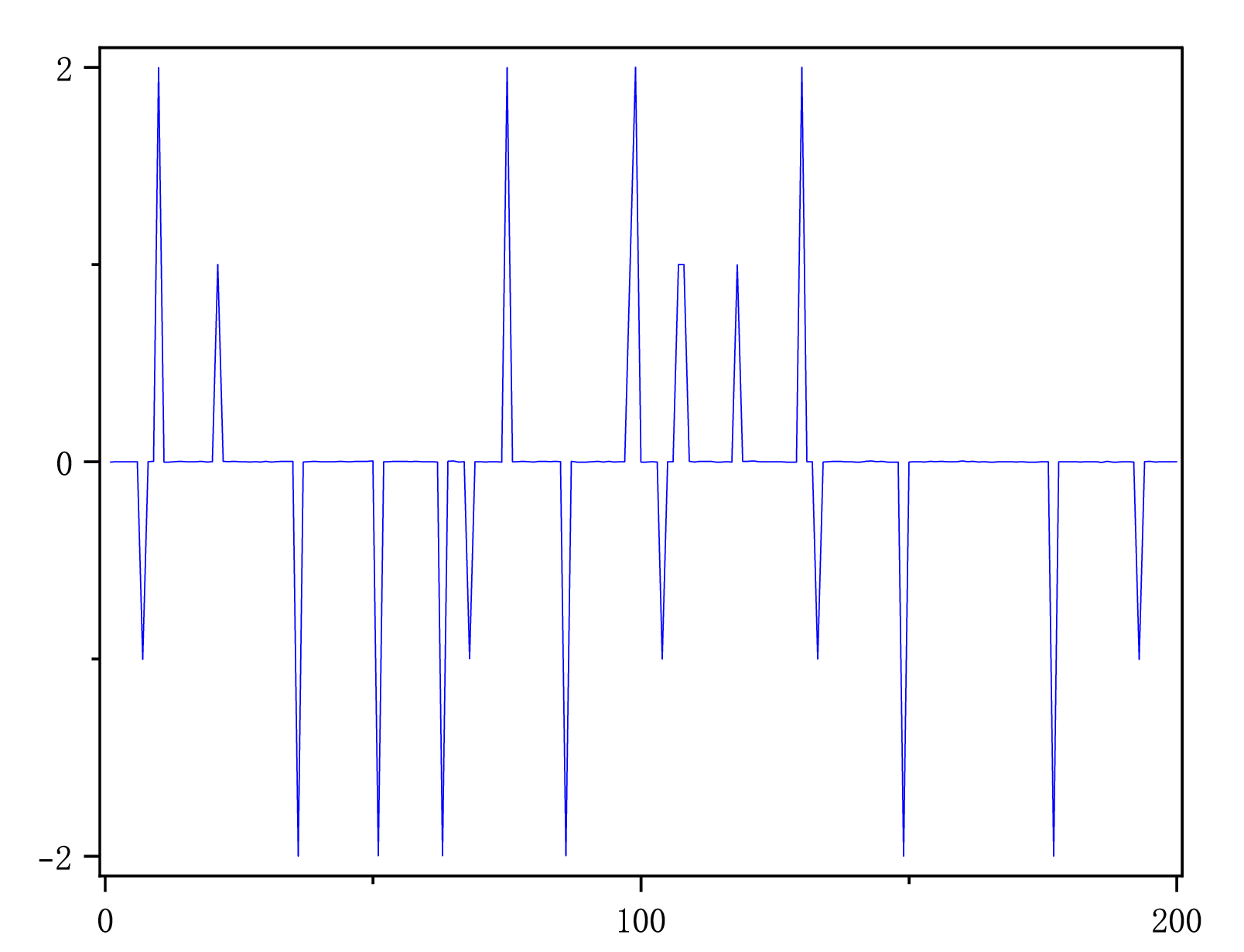}}
    \subfigure[Recovery signal by $\ell^{1/2}$-SVD algorithm ({\rm Rerror}= 0.0026).]{\includegraphics[scale=0.26]{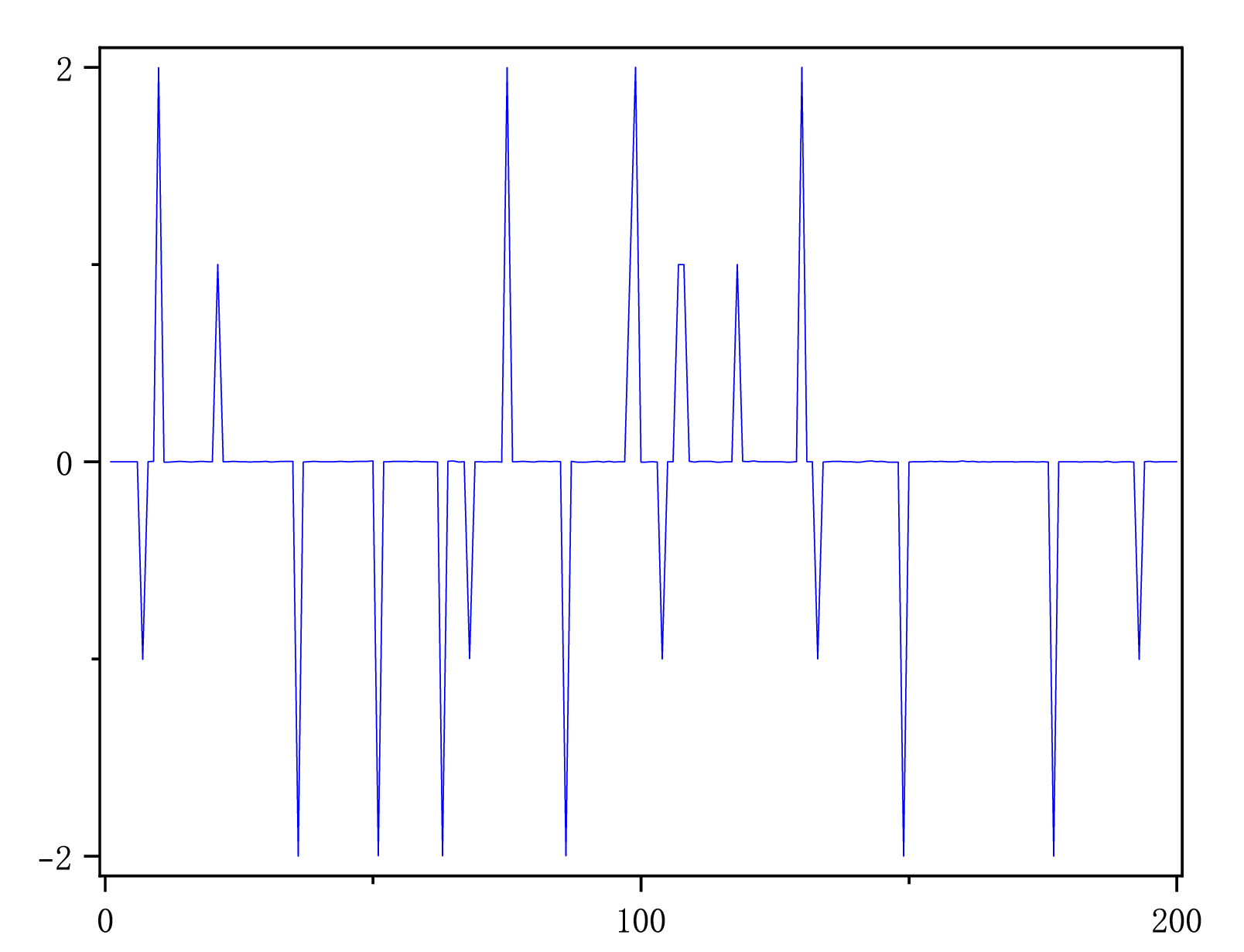}}
    \caption{(a) is the true signal, (b)-(f) are the recovery effect of the ISTA, FISTA, PG, $\ell^{1}$-SVD and $\ell^{1/2}$-SVD algorithms, respectively.}
    \label{figure:2}
\end{figure}

\par We utilize different algorithms to restore signals with different sizes of $K_{m\times n}$ $(where\ m=n)$. In our experiment, we set $s=0.1m$ for the signal and measure the Rerror and running time. For the iterative algorithms, the regularization parameter $\alpha$ is selected based on Morozov's discrepancy principle. For SVD algorithms, we choose $\alpha=\mathcal{O}\left(\delta\right)$. The stopping criterion is that the Rerror between two neighboring iterations is less than 1e-5, or the maximum number of iterations reaches 2000. Fig.\ \ref{figure:1}(a) shows the true signal, and Fig.\ \ref{figure:1}(b) shows the observed data with $80$dB noise. Fig.\ \ref{figure:2}(b)-(f) presents the recovery results from the ISTA, FISTA, PG, and SVD algorithms, respectively. The results demonstrate that the recovery performance of the SVD algorithms is competitive with that of the iterative algorithms

\begin{table}[H]
\caption{The Rerror and running time of different algorithms with different size of $K$ when $\sigma=80$dB.}
\label{table:1}
\centering
\tabcolsep=0.35cm
\tabcolsep=0.01\linewidth
\scalebox{0.9}{
\begin{tabular}{c|cccccccccc}
\hline
Size of $K$ & \multicolumn{2}{c}{ISTA} & \multicolumn{2}{c}{FISTA} & \multicolumn{2}{c}{PG} & \multicolumn{2}{c}{$\ell^{1}$-SVD} & \multicolumn{2}{c}{$\ell^{1/2}$-SVD} \\
    & Rerror & Time(s) & Rerror & Time(s) & Rerror & Time(s) & Rerror & Time(s) & Rerror & Time(s) \\
\hline
200$\times$200  & 0.0078 & 17.55 & 0.0064 & 2.78 & 0.0029 & 0.08 & 0.0028 & \textbf{0.01} & \textbf{0.0026} & \textbf{0.01} \\
400$\times$400  & 0.0041 & 58.34 & 0.0033 & 10.06 & \textbf{0.0016} & 0.33 & 0.0056 & 0.06 & 0.0047 & \textbf{0.04} \\
800$\times$800  & 0.0023 & 554.87 & 0.0019 & 96.35 & \textbf{0.0009} & 1.25 & 0.0026 & 0.19 & 0.0022 & \textbf{0.16} \\
1600$\times$1600  & 0.0010 & 2953.32 & 0.0009 & 597.24 & \textbf{0.0006} & 2.86 & 0.0076 & 1.36 & 0.0054 & \textbf{1.25} \\
3200$\times$3200  & -- & Over 5h  & 0.0006 & 3524.16 & \textbf{0.0006} & 15.31 & 0.0079 & 11.57  & 0.0078 & \textbf{11.48} \\
6400$\times$6400  & -- & Over 5h  & -- & Over 5h & \textbf{0.0009} & 97.56 & 0.0074 & \textbf{84.38}  & 0.0061 & 84.82 \\
\hline
\end{tabular}}
\end{table}

\par Table \ref{table:1} presents the Rerror and the running times of different algorithms across different sizes of $K$ at a noise level of $80$dB. We denote cases where the running time exceeds 5 hours with \verb+"+Over 5h\verb+"+ to indicate that the respective algorithm is not competitive. From Table \ref{table:1}, it is evident that SVD algorithms exhibit competitive performance relative to the other three algorithms in terms of recovery effectiveness as the size of $K$ increases. However, regarding running time, ISTA, FISTA, and PG require significant time for iterations, with ISTA being the slowest, followed by FISTA, which runs slower than PG. In contrast, SVD algorithms, regardless of the regularization, save considerable time during iterations, primarily spending time on the singular value decomposition process, making them faster than the other methods.

\par  For a fixed matrix $K$, the traditional iterative algorithms such as ISTA, FISTA, and PG algorithms require the computation of \eqref{equ1.3} at each iteration. The computational burden of the SVD algorithm primarily stems from the singular value decomposition of the matrix $K$. However, the SVD algorithm only necessitates the computation of singular value decomposition once. Once a singular value system is established, a regularized solution can be obtained using \eqref{equ1.6} or \eqref{equ1.8}. Thus, the SVD algorithm is considerably faster than the iterative algorithms.

\begin{figure}[htbp]
    \centering
    \includegraphics[scale=0.265]{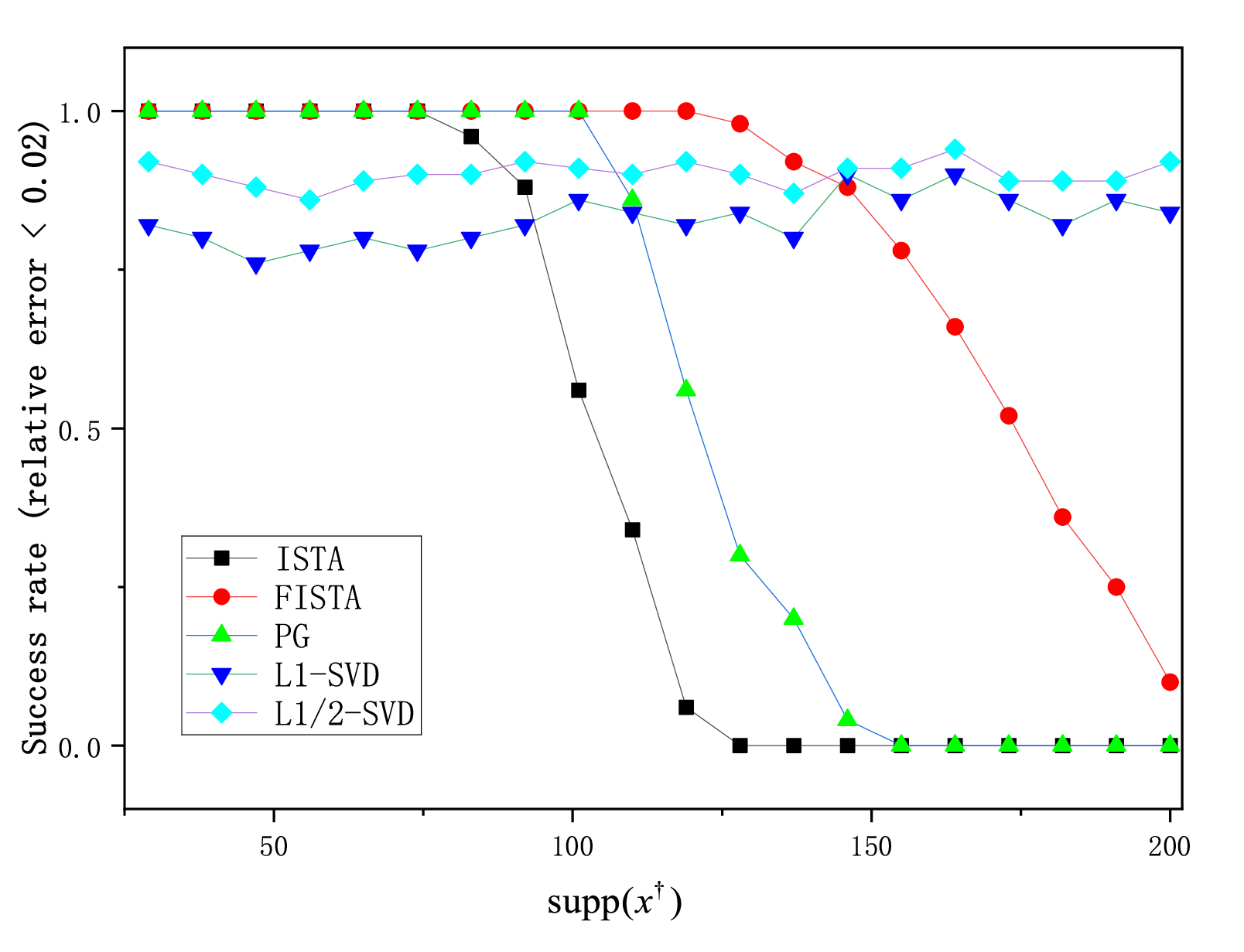}
    \caption{The recovery success probability of ISTA, FISTA, PG and SVD algorithms for data with 80dB nosie, where we keep the size of $K$ fixed and changes ${\rm supp}(x^{\dag})$.}
    \label{figure:3}
\end{figure}

\par Next, we evaluate the probability of successful recovery across different algorithms as the support set ${\rm supp}(x^{\dag})$ increases. With the noise level maintained at  $80$dB, we compare the probability of successful recovery. We select 100 random Gaussian matrices with the same size as a single group and employ different algorithms to evaluate the probability of successful recovery. The results, exhibited in Fig.\ \ref{figure:3}, indicate that as ${\rm supp}(x^{\dag})$ increases, the probability of successful recovery for the ISTA and PG algorithms decreases rapidly, while the decline for the FISTA algorithm is more gradual. In contrast, the probability of successful recovery for the $\ell^{1}$-SVD algorithm ranges between 75\% and 85\%, and for the $\ell^{1/2}$-SVD algorithm, it varies from 85\% to 95\%. These findings suggest that the SVD algorithms for different regularizations demonstrate a significant advantage regarding recovery success probability.

\subsection{Image deblurring}\label{sec4.2}

In the second example, we explore an ill-conditioned image deblurring problem, which pertains to removing blurring artifacts from images caused by factors such as defocus aberration or motion blur. The blur is typically modeled using a Fredholm integral equation of the first kind
\begin{equation*}
   \int_{a}^{b}\mathcal{K}(s,t)f(t)dx=g(s),
\end{equation*}
where $\mathcal{K}(s,t)$ denotes the kernel function, $f(t)$ represents the true image, and $g(s)$ is the observed image. We employ MATLAB regularization tools (\cite{H07}) by calling $[K,y,x^{\dag}]={\rm blur}(n,band,\tau)$, where $K$ is derived from the sparse representation matrix in MATLAB, transformed into a normal representation matrix suitable for singular value decomposition. In this case, the Gaussian point-spread function serves as the kernel function
\begin{equation*}
   \mathcal{K}(s,t)=\frac{1}{\pi\tau^{2}}{\rm exp}\left\{-\frac{s^{2}+t^{2}}{2\tau^{2}}\right\}.
\end{equation*}

\par The symmetric $n^{2}\times n^{2}$ Toeplitz matrix $K$ is expressed as $K=(2\pi\tau^{2})^{-1}T\otimes T$, where $T$ is an $n\times n$ symmetric banded Toeplitz matric. The first row of $T$ is generated by calling
\begin{equation*}
    z=[{\rm exp}\left(-([0:band-1].^{\wedge} 2)\right);{\rm zeros}(1:n^{\wedge} 2-band))].
\end{equation*} 
The parameter $\tau$ controls the shape of the Gaussian point spread function and consequently influences the degree of smoothing (the larger value $\tau$ corresponding, the larger condition number; this implies the deblur problems are more ill-posed). Table \ref{table:2} illustrates the relationship between $\tau$ and ${\rm cond}(K)$, along with the Rerror and the running time of the PG and SVD algorithms for different $\tau$ values, where ${\rm cond}(K)$ represents the condition number of $K$ for blurring $64\times64$ images. In this scenario, ISTA and FISTA require excessive computational time and are therefore not included in the comparison.

\begin{table}[H]
\caption{The Rerror and the running time of the PG and SVD algorithms with with different values of $\tau$.}
\label{table:2}
\centering
\tabcolsep=0.35cm
\tabcolsep=0.01\linewidth
\begin{tabular}{c|cccccccccc}
\hline
$\tau$  & \multicolumn{2}{c}{0.6} & \multicolumn{2}{c}{0.7} & \multicolumn{2}{c}{0.8} & \multicolumn{2}{c}{0.9} \\
\hline
${\rm cond}(K)$  & \multicolumn{2}{c}{8.7327} & \multicolumn{2}{c}{31.328} & \multicolumn{2}{c}{137.11} & \multicolumn{2}{c}{729.84} \\
\hline
        & Rerror & Time(s) & Rerror & Time(s) & Rerror & Time(s) & Rerror & Time(s)  \\
\hline
PG  & \textbf{0.0020} & \textbf{17.72} & \textbf{0.0069} & 88.97 & 0.0375 & 143.00 & \textbf{0.0805} & 143.26 \\ 
$\ell^{1}$-SVD  & 0.0033 & 31.47 & 0.0086 & \textbf{30.62} & 0.0439 & 32.05 & 0.0971 & \textbf{29.99} \\
$\ell^{1/2}$-SVD & 0.0032 & 31.79 & 0.0080 & 30.79 & \textbf{0.0332} & \textbf{30.25} & 0.0816 & 31.17 \\
\hline
\end{tabular}
\end{table}

\begin{figure}[htbp]
    \centering
    \subfigure[True image.]{\includegraphics[scale=0.26]{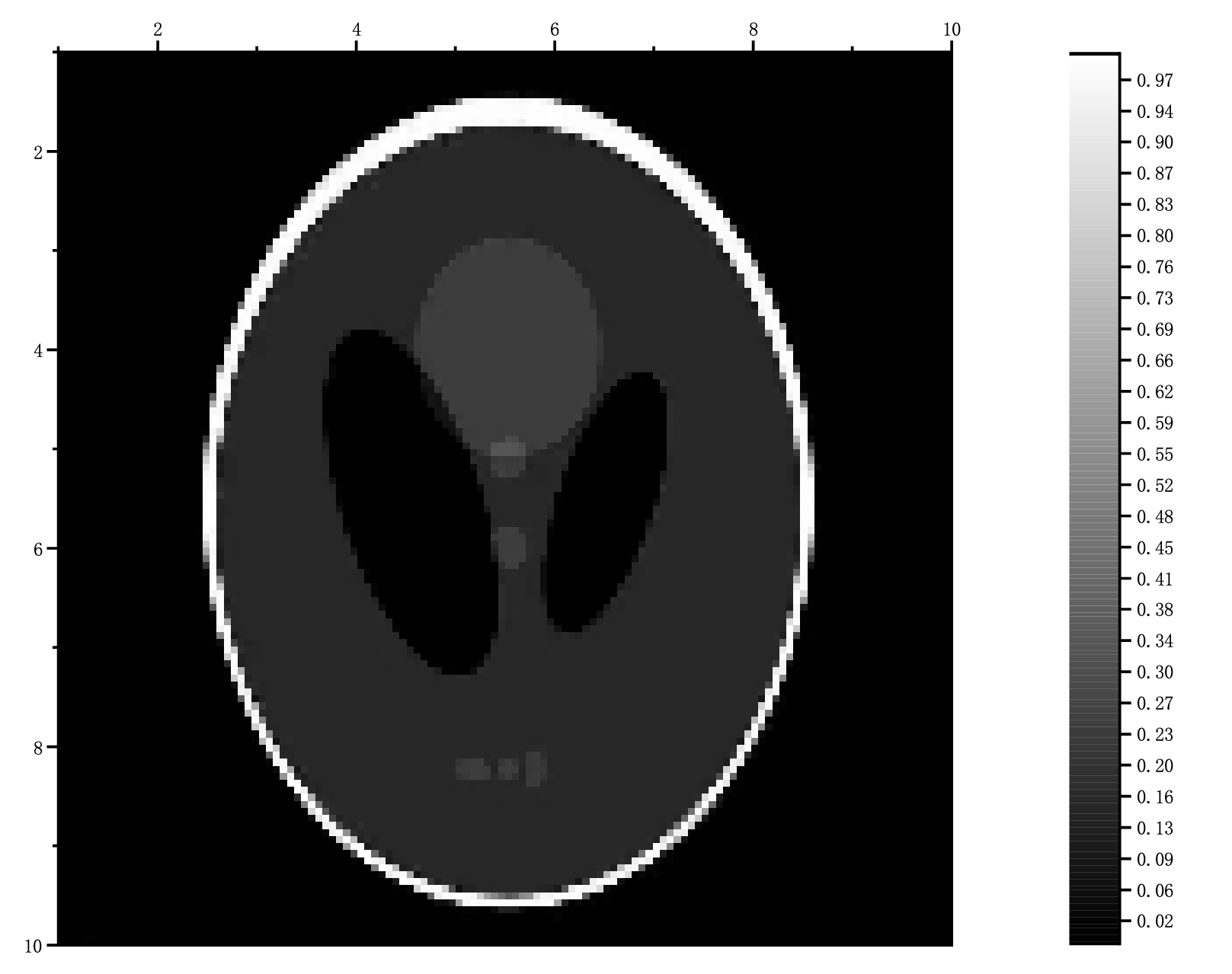}}
    \subfigure[Blurred and noisy image ($\sigma=80$dB.)]{\includegraphics[scale=0.26]{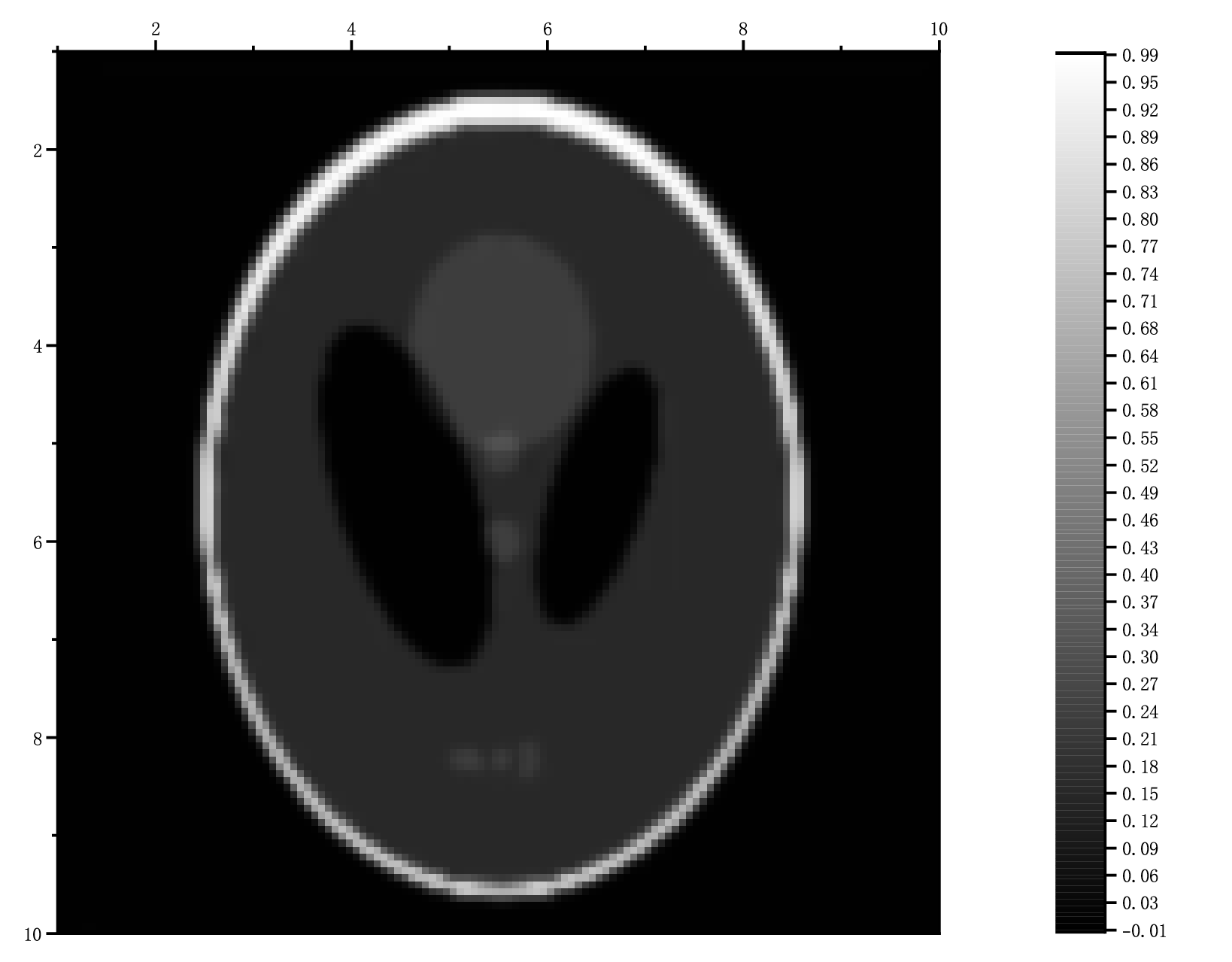}}\\
    \subfigure[Deblurring by PG ({\rm Rerror}=0.0071).]{\includegraphics[scale=0.26]{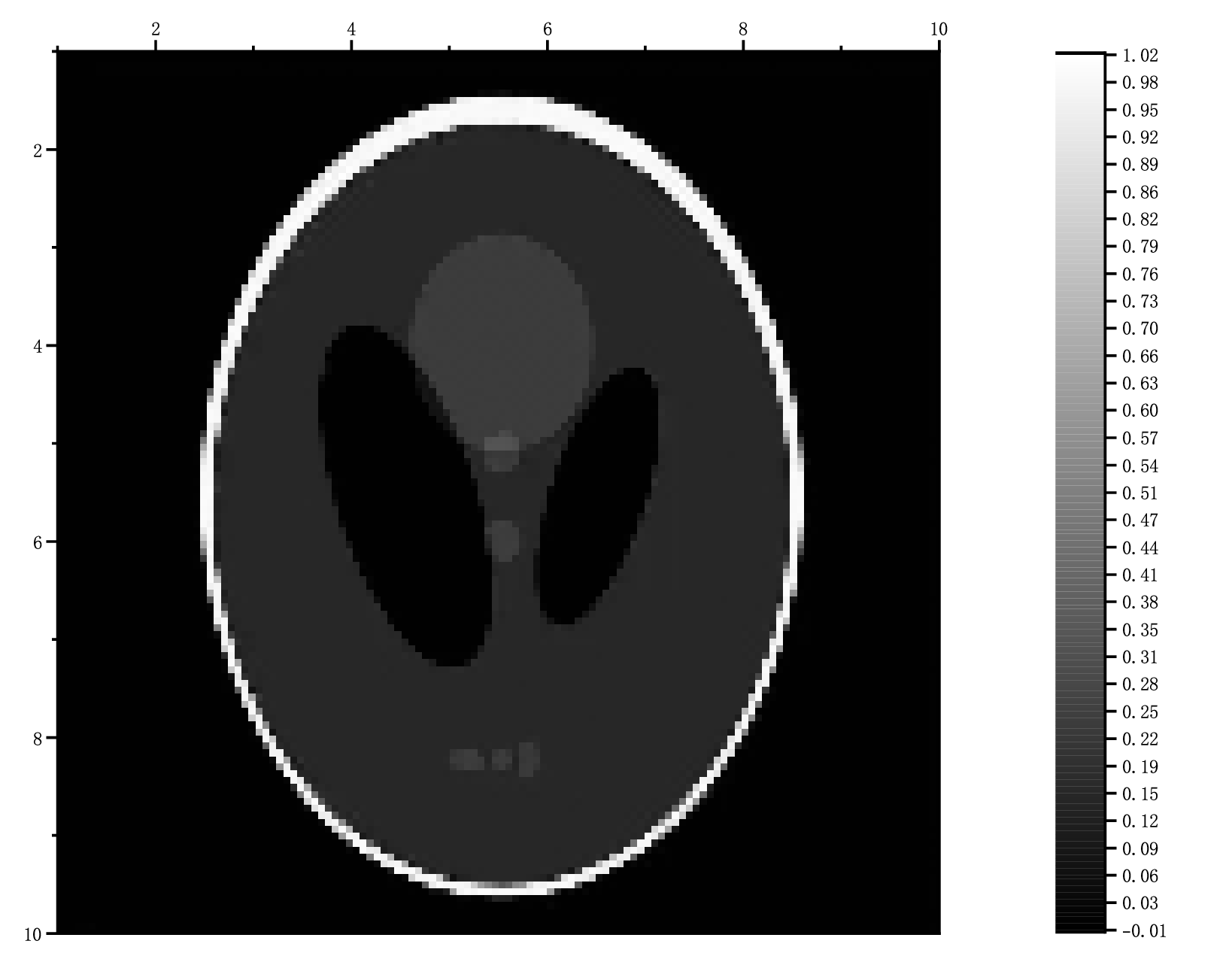}}
    \subfigure[Deblurring by $\ell^{1}$-SVD ({\rm Rerror}=0.0080).]{\includegraphics[scale=0.26]{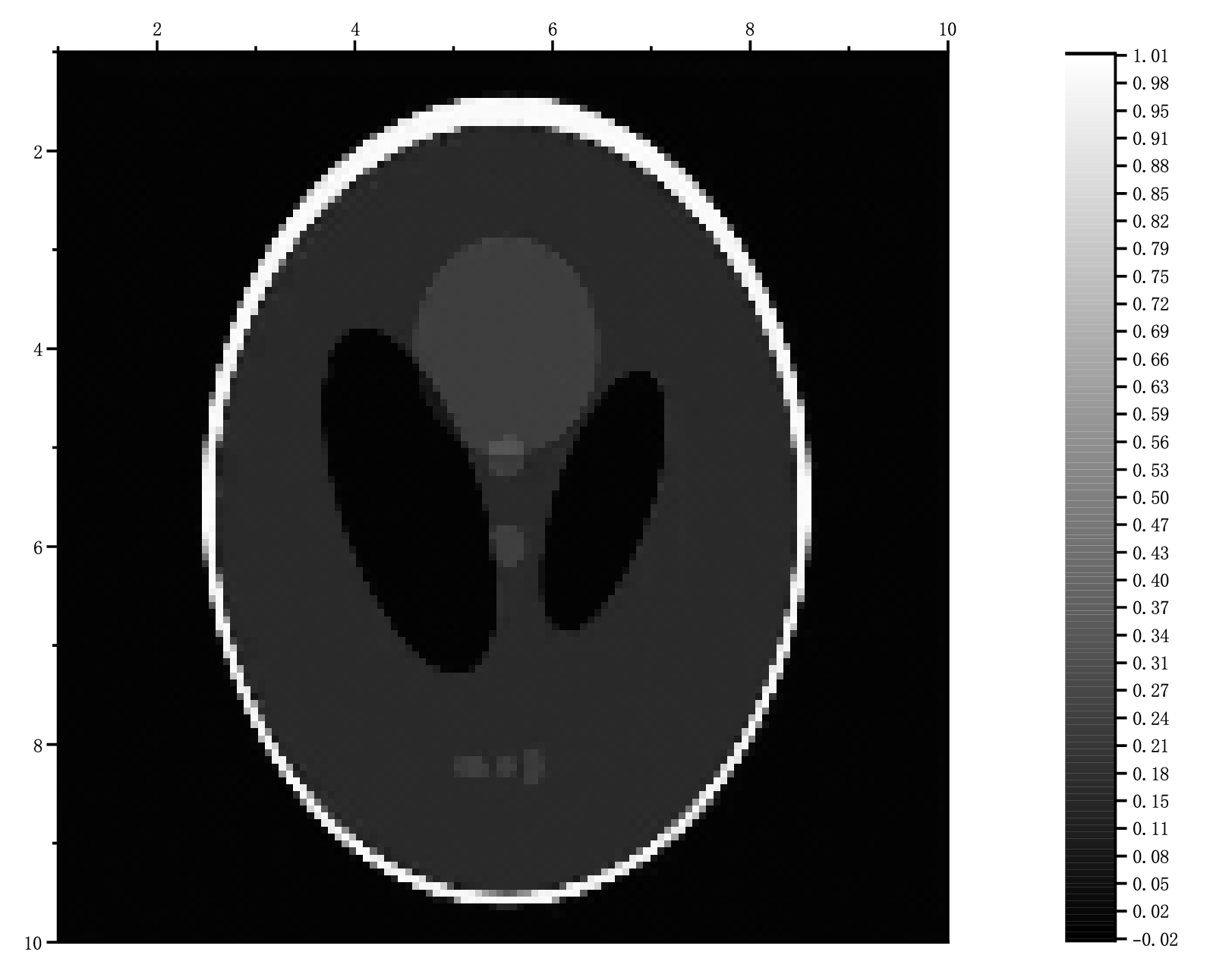}}\\
    \subfigure[Deblurring by $\ell^{1/2}$-SVD ({\rm Rerror}=0.0076).]{\includegraphics[scale=0.26]{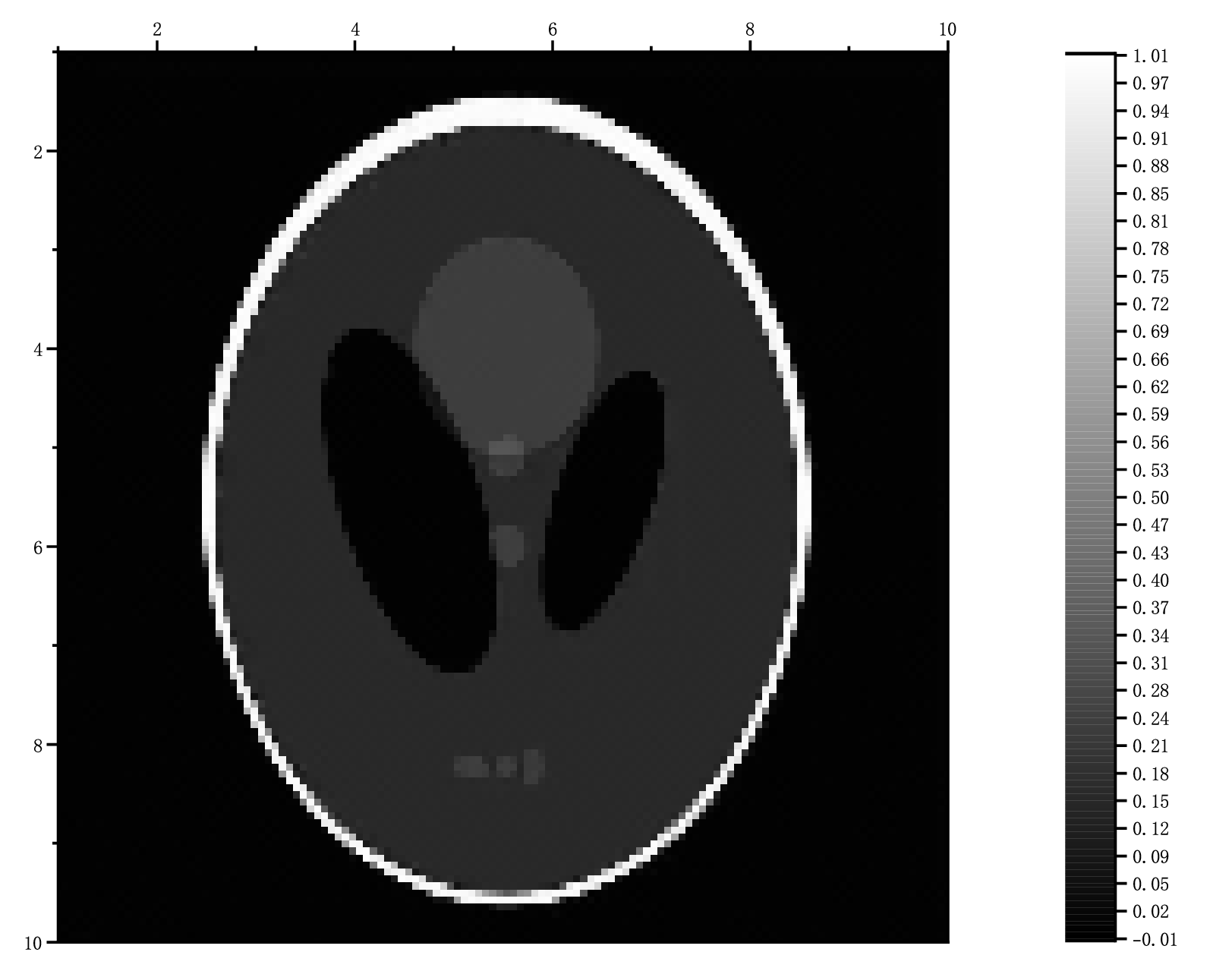}}
    \caption{(a) is the original image and (b) is the blurred image which pixel value is 128$\times$128 with $80$dB noise, (c)-(e) are the recovery effect of the PG, $\ell^{1}$-SVD and $\ell^{1/2}$-SVD algorithms, respectively.}
    \label{figure:4}
\end{figure}

\par We select the parameters as follows $band={\rm floor}(n/4)$, $\tau=0.7$ and add noise $\delta$ to the exact data $y^{\dag}$ by calling $y^{\delta}={\rm awgn}(Kx^{\dag},\sigma)$, where $\sigma=80$dB. For iterative algorithms, the regularization parameter $\alpha$ is determined using Morozov's discrepancy principle. For SVD algorithms, we set $\alpha=\mathcal{O}\left(\delta\right)$. The stopping criterion is that the Rerror between two neighboring iterations is less than 1e-5, or the maximum number of iterations reaches 2000. Fig.\ \ref{figure:4}(a) is the true image, and Fig.\ \ref{figure:4}(b) is the blurred image with $80$dB noise, while Fig.\ \ref{figure:4}(c)-(e) illustrate the recovery outcomes of the PG and SVD algorithms for the case, respectively, where the size of $K$ is $128\times128$ as shown in Fig.\ \ref{figure:4}. The results indicate that the recovery outcomes of the SVD algorithms are competitive with those of the other iterative algorithms.

\begin{table}[H]
\caption{The Rerror and the running time of different algorithms for deblurring with different sizes of $K$ when $\sigma=80$dB.}
\label{table:3}
\centering
\tabcolsep=0.35cm
\tabcolsep=0.01\linewidth
\scalebox{0.88}{
\begin{tabular}{c|cccccccccc}
\hline
The size of  & \multicolumn{2}{c}{ISTA} & \multicolumn{2}{c}{FISTA} & \multicolumn{2}{c}{PG} & \multicolumn{2}{c}{$\ell^{1}$-SVD} & \multicolumn{2}{c}{$\ell^{1/2}$-SVD} \\
matrix $K$  & Rerror & Time(s) & Rerror & Time(s) & Rerror & Time(s) & Rerror & Time(s) & Rerror & Time(s) \\
\hline
$8^{2}\times8^{2}$  & 0.0066 & 0.69 & \textbf{0.0054} & 0.17 & 0.0936 & \textbf{0.01} & 0.0102 & \textbf{0.01} & 0.0094 & \textbf{0.01} \\
$16^{2}\times16^{2}$  & \textbf{0.0067} & 16.65 & 0.0089 & 1.69 & 0.0125 & 0.09 & 0.0090 & 0.03 & 0.0069 & \textbf{0.02} \\
$32^{2}\times32^{2}$  & 0.0061 & 1135.21 & \textbf{0.0046} & 159.61 & 0.0061 & 6.07 & 0.0097 & \textbf{0.41} & 0.0081 & \textbf{0.41} \\
$64^{2}\times64^{2}$  & -- & Over 5h & \textbf{0.0039} & 9525.09 & 0.0072 & 82.66 & 0.0093 & \textbf{30.41} & 0.0083 & 30.45 \\
$128^{2}\times128^{2}$  & -- & Over 5h  & -- & Over 5h & \textbf{0.0071} & 2201.68 & 0.0080 & \textbf{1446.50}  & 0.0076 & 1521.95\\
\hline
\end{tabular}}
\end{table}

\par Table \ref{table:3} presents the Rerror and running time of different algorithms across different sizes of $K$ at a noise level of $80$dB. We denote cases where the running time exceeds 5 hours with \verb+"+Over 5h\verb+"+, indicating that the respective algorithm is not considered competitive. From Table \ref{table:3}, it is evident that the performance of the SVD algorithms remains competitive with the other three algorithms concerning deblurring effectiveness as the size of $K$ increases. Regarding running time, ISTA takes more time than both the FISTA and PG algorithms, with FISTA running longer than PG. In contrast, the SVD algorithms for different regularization methods outperform the others in speed, as most running time is spent on processing singular value decomposition. When the operator $K$ for the linear inverse problems is known, the results of singular value decomposition can be treated as a known condition.

\begin{figure}[htbp]
    \centering
    \includegraphics[scale=0.27]{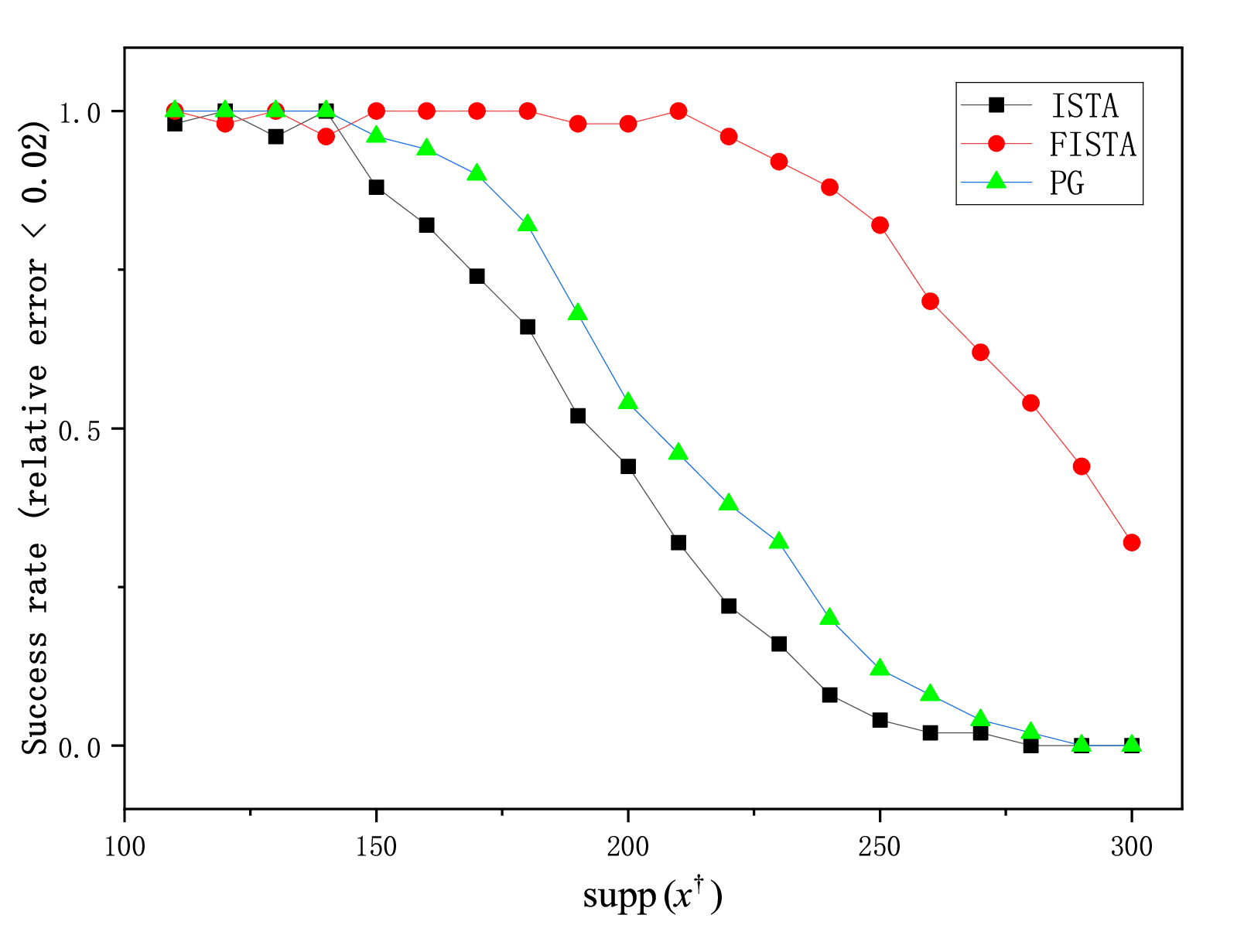}
    \includegraphics[scale=0.27]{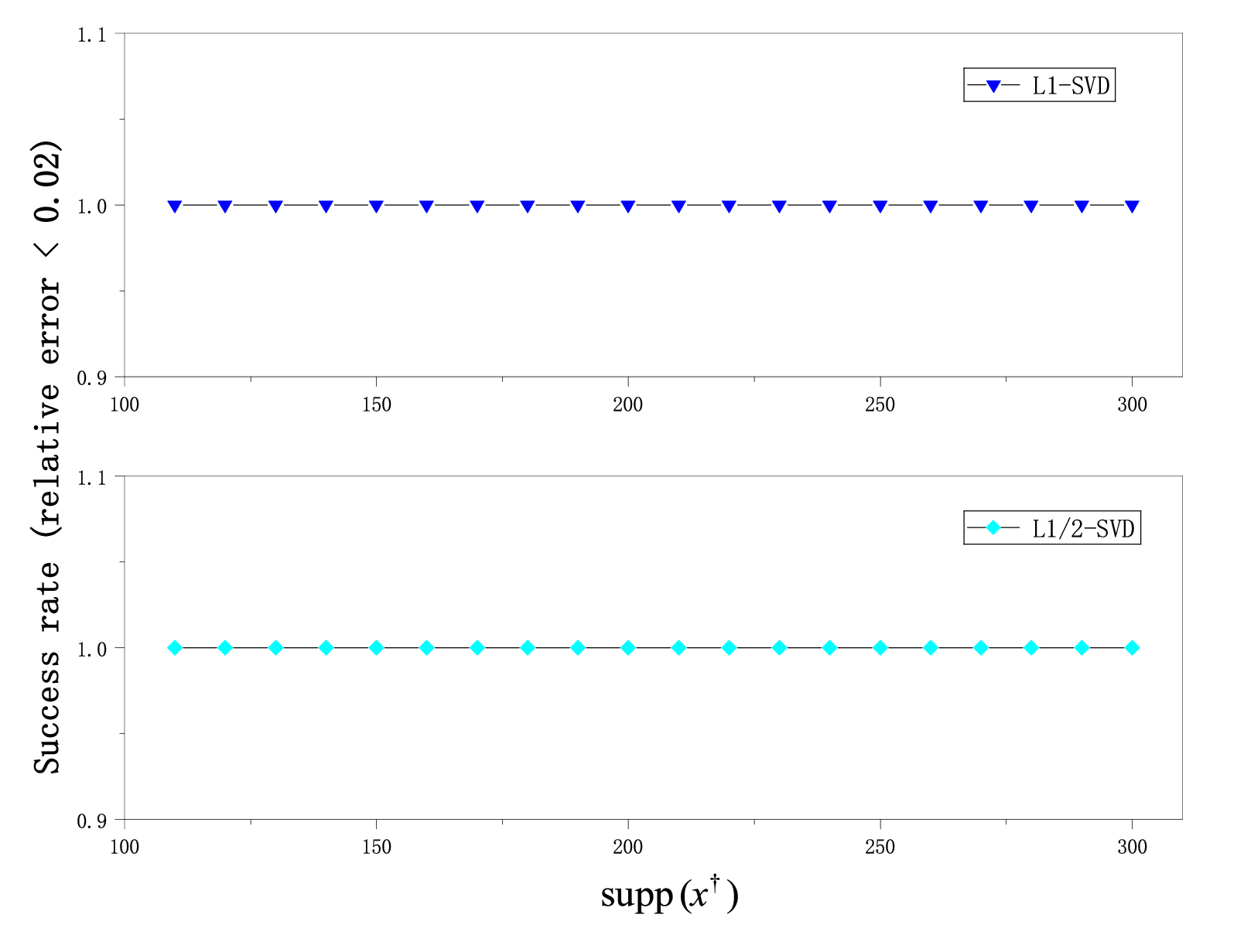}
    \caption{The success rates of ISTA, FISTA, PG and SVD algorithms for data with 80dB nosie, where we keep the size of $K$ fixed and changes ${\rm supp}(x^{\dag})$.}
    \label{figure:5}
\end{figure}

\par What remains is to evaluate the probability of successful recovery for different algorithms as complexity increases. We maintain consistent settings with a noise level of $80$dB and compare the success recovery probabilities of image deblurring across different algorithms. For each ${\rm supp}(x^{\dag})$, we select 100 images that share the same ${\rm supp}(x^{\dag})$ as a group. By altering the value of ${\rm supp}(x^{\dag})$ and employing different algorithms, we evaluate the probability of successful recovery. The results are illustrated in Fig.\ \ref{figure:5}. It is evident that as the support set increases, the probability of successful recovery for both the ISTA and PG algorithms declines rapidly, while the probability of successful recovery for the FISTA algorithm decreases more gradually. In contrast, the SVD algorithms consistently achieve a 100\% success recovery rate. This indicates a significant advantage of the SVD algorithms over different regularization methods regarding success recovery probability.

\section{Conclusion}

\par This paper investigates challenges associated with compressive sensing and image deblurring within the context of sparse inverse problems. By integrating the sparsity regularization with $\ell^{1}$ and $\ell^{1/2}$ penalties using the singular value decomposition method when the operator $K$ is diagonal within a specific orthogonal basis, we propose two new regularization operators that utilize nonlinear soft thresholding and half thresholding functions and provide a theoretical foundation to demonstrate these operators' effectiveness and regularization properties for the general linear compact operator $K$. Our experiments assess the Rerror, running time, and probability of successful recovery. The results indicate that the performance of the SVD algorithms is competitive with the ISTA, FISTA, and PG algorithms concerning Rerror. In contrast, the SVD algorithms outperform the other three iterative algorithms regarding running time and probability of successful recovery.

\section{Conflict of interest statement}
There are no conflicts of interest regarding this submission, and all authors have approved the manuscript for publication.

\end{document}